 \newtheorem{Theorem}{Theorem}[section]
 \newtheorem{Corollary}[Theorem]{Corollary}
 \newtheorem{Lemma}[Theorem]{Lemma}
 \newtheorem{Proposition}[Theorem]{Proposition}
 \newtheorem{Definition}[Theorem]{Definition}
\newtheorem{Question}[Theorem]{Question}
 \newtheorem{Remark}[Theorem]{Remark}
 \newtheorem{Example}[Theorem]{Example}
 \numberwithin{equation}{section}
\begin{document}

\title{A note on $\xi-$Bergman kernels}
\author{Shijie Bao}
\address{Shijie Bao: Institute of Mathematics, Academy of Mathematics
and Systems Science, Chinese Academy of Sciences, Beijing 100190, China.}
\email{bsjie@amss.ac.cn}

\author{Qi'an Guan}
\address{Qi'an Guan: School of
Mathematical Sciences, Peking University, Beijing 100871, China.}
\email{guanqian@math.pku.edu.cn}

\author{Zheng Yuan}
\address{Zheng Yuan: Institute of Mathematics, Academy of Mathematics
and Systems Science, Chinese Academy of Sciences, Beijing 100190, China.}
\email{yuanzheng@amss.ac.cn}

\subjclass[2010]{32D15 32E10 32L10 32U05 32W05}

\thanks{}

\keywords{$\xi-$Bergman kernel, generalized Lelong number, jumping number, complex singularity exponent}

\date{\today}

\dedicatory{}

\commby{}
%%% ----------------------------------------------------------------------

%%% ----------------------------------------------------------------------

%%% ----------------------------------------------------------------------
\begin{abstract}
    In the present note, we introduce the $\xi-$complex singularity exponents, which come from the asymptotic property of $\xi-$Bergman kernels on sub-level sets of plurisubharmonic functions; give some relations (including a closedness property) among $\xi-$complex singularity exponents, complex singularity exponents, and jumping numbers; generalize some properties of complex singularity exponents (such as the restriction formula and subadditivity property) to $\xi-$complex singularity exponents.
    \end{abstract}
    
    \maketitle
    
    \section{Introduction}\label{Introduction}
    
    \subsection{Background}
    
      In \cite{BG1}, Bao-Guan (the first and the second named authors of the present note) introduced a version of the generalized Bergman kernels called $\xi-$Bergman kernels, and established the log-plurisubharmonicity property for fiberwise $\xi-$Bergman kernels (which is a generalization of Berndtsson's result for log-plurisubharmonicity property for fiberwise Bergman kernels, see \cite{Blogsub}), deducing an effectiveness result of the strong openness property, which is independent of the strong openness property. After that, in \cite{BG2}, Bao-Guan generalized the log-plurisubharmonicity property for fiberwise $\xi-$Bergman kernels to weighted $\xi-$Bergman kernels, deducing a new proof for the effectiveness result of the $L^p$ strong openness property in \cite{GY-lp-effe}.
    
      Recall the linear space of sequences of complex numbers
      \[\ell_1:=\left\{\xi=(\xi_{\alpha})_{\alpha\in\mathbb{N}^n} \ : \ \sum_{\alpha\in\mathbb{N}^n}|\xi_{\alpha}|\rho^{|\alpha|}<+\infty, \ \text{for \ any\ } \rho>0\right\}.\]
      
      For any $z_0\in\mathbb{C}^n$, any $\xi=(\xi_{\alpha})\in\ell_1$, and any $F(z)=\sum_{\alpha\in\mathbb{N}^n}a_{\alpha}(z-z_0)^{\alpha}\in\mathcal{O}_{n,z_0}$, the value that $\xi$ acts on $F$ is defined as 
      \[(\xi\cdot F)(z_0):=\sum_{\alpha\in\mathbb{N}^n}\xi_{\alpha}\frac{F^{(\alpha)}(z_0)}{\alpha!}=\sum_{\alpha\in\mathbb{N}^n}\xi_{\alpha}a_{\alpha}.\]
      In \cite{BG1}, it is shown that $\ell_1$ is the dual space of $\mathcal{O}_{n,z_0}$ ($\mathcal{O}_{z_0}$ for short) under the analytic Krull topology (see \cite{Dembook}).
    
       Now we recall the definition of the $\xi-$Bergman kernel. Let $D$ be a domain in $\mathbb{C}^n$, and $\psi$ a plurisubharmonic function on $D$. For any $\xi\in\ell_1$, the (weighted) $\xi-$Bergman kernel with respect to $\xi$ is denoted by
      \begin{equation*}
          K^{\psi}_{\xi,D}(z):=\sup_{f\in A^2(D,e^{-\psi})}\frac{|(\xi\cdot f)(z)|^2}{\int_D|f|^2e^{-\psi}},
      \end{equation*}
      where $A^2(D,e^{-\psi}):=\{f\in\mathcal{O}(D) : \int_D|f|^2e^{-\psi}<+\infty\}$, and we denote $K^{\psi}_{\xi,D}(z)=0$ if $A^2(D,e^{-\psi})=\{0\}$. In particular, for $\psi\equiv 0$, we denote $K_{\xi,D}^{\psi}(z)$ by $K_{\xi,D}(z)$.
    
      In \cite{BG2} (see also \cite{BG1}), the following log-plurisubharmonicity property was proved for fiberwise $\xi-$Bergman kernels, which is a generalization of Berndtsson's log-plurisubharmonicity property for fiberwise Bergman kernels (see \cite{Blogsub}). 
      
      Let $\Omega$ be a pseudoconvex domain in $\mathbb{C}^{n+1}$ with coordinate $(z,w)$, where $z\in\mathbb{C}^n$, $w\in\mathbb{C}$. Let $p$, $q$ be the natural projections $p(z,w)=w$, $q(z,w)=z$ on $\Omega$, $\Omega':=p(\Omega)$. For any $w\in \Omega'$, assume $\Omega_w:=p^{-1}(w)\subseteq\Omega$ is a bounded domain in $\mathbb{C}^n$. Let $\psi$ be a plurisubharmonic function on $\Omega$, and let $K^{\psi}_{\xi,w}(z)=K^{\psi}_{\xi,\Omega_w}(z)$ be the Bergman kernels of the domains $\Omega_w$ with respect to some fixed $\xi\in \ell_1$.
    
     \begin{Theorem}[\cite{BG1, BG2}]\label{xi-log-psh}
        $\log K^{\psi}_{\xi,w}(z)$ is a plurisubharmonic function with respect to $(z,w)$, for any $\xi\in \ell_1$.
     \end{Theorem}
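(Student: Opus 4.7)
The plan is to reduce to Berndtsson's classical theorem \cite{Blogsub} by approximating $\xi$ with finitely supported truncations. For each integer $N\ge 0$, set $\xi^{(N)}_{\alpha}=\xi_{\alpha}$ if $|\alpha|\le N$ and $\xi^{(N)}_{\alpha}=0$ otherwise; the map $f\mapsto(\xi^{(N)}\cdot f)(z)$ is then a finite linear combination of partial derivatives of $f$ at $z$. The two-step strategy is (i) to show that $\log K^{\psi}_{\xi^{(N)},w}(z)$ is plurisubharmonic in $(z,w)$ for each fixed $N$, and (ii) to pass to the limit $N\to\infty$ via locally uniform convergence.

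For step (i), I would follow Berndtsson's $\bar\partial$-method. Note that $K^{\psi}_{\xi^{(N)},w}(z)$ is the squared dual norm of the continuous functional $\Lambda_{N,z,w}\colon A^{2}(\Omega_{w},e^{-\psi})\to\mathbb{C}$ given by $f\mapsto(\xi^{(N)}\cdot f)(z)$. Along any holomorphic disk $t\mapsto(z(t),w(t))$ in $\Omega$, one uses Hörmander's $L^{2}$-estimates (or a jet-extension version of the Ohsawa-Takegoshi theorem) to construct a holomorphic family $f_{t}\in A^{2}(\Omega_{w(t)},e^{-\psi})$ with controlled $L^{2}$-norm for which $t\mapsto(\xi^{(N)}\cdot f_{t})(z(t))$ is holomorphic and near-extremal at the center. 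Subharmonicity of $\log|(\xi^{(N)}\cdot f_{t})(z(t))|^{2}$, together with the variational inequality $K^{\psi}_{\xi^{(N)},w(t)}(z(t))\ge|(\xi^{(N)}\cdot f_{t})(z(t))|^{2}/\|f_{t}\|^{2}$, then yields the sub-mean-value inequality for $\log K^{\psi}_{\xi^{(N)},w}$ along the disk.

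For step (ii), I would establish local uniform convergence $K^{\psi}_{\xi^{(N)},w}(z)\to K^{\psi}_{\xi,w}(z)$ on $\Omega$. For $f\in A^{2}(\Omega_{w},e^{-\psi})$ with $\|f\|\le 1$, Cauchy estimates on a polydisc of radius $r$ centered at $z$ and contained in $\Omega_{w}$ give $|f^{(\alpha)}(z)/\alpha!|\le C r^{-|\alpha|}$, where $C$ and $r$ depend only on a compact neighborhood of $(z,w)$ and on the local behavior of $\psi$. The $\ell_{1}$-condition on $\xi$ then forces $\sum_{|\alpha|>N}|\xi_{\alpha}|r^{-|\alpha|}\to 0$ uniformly, making the tail $(\xi\cdot f)(z)-(\xi^{(N)}\cdot f)(z)$ uniformly small across normalized $f$ and points in compact subsets. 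Since local uniform limits of plurisubharmonic functions are plurisubharmonic, this completes the argument.

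The main obstacle is step (i): adapting Berndtsson's argument to functionals built from higher derivatives rather than pure point evaluation. In his original proof the essential input is a holomorphic section of the relevant direct image bundle realizing the dual-norm supremum; for $\xi^{(N)}$-evaluation one needs an analogous jet-type section, which can be produced either by differentiating the reproducing property of the classical Bergman kernel or by a jet-extension form of Ohsawa-Takegoshi. Once this holomorphic family is in hand, the curvature positivity of the direct image and the sub-mean-value estimate proceed as in Berndtsson's argument, and the subsequent passage to the $\ell_{1}$-limit is purely analytic.
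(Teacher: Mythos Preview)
This theorem is not proved in the present paper; it is quoted from \cite{BG1,BG2}. The paper does, however, indicate the shape of the original argument: it uses B\l ocki's optimal $L^2$ extension theorem (see the remark following Theorem~\ref{OT-L2ext}), and it relies on the fact that for any $F$ holomorphic on $\Omega$ and any $\xi\in\ell_1$ the function $(z,w)\mapsto(\xi\cdot F(\cdot,w))(z)$ is itself holomorphic (this is explicitly invoked inside the proof of Theorem~\ref{thm-subadd}).

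Your truncation route is different, and in fact a detour. The mechanism you outline for step~(i) --- extend a near-extremal $f_0\in A^2(\Omega_{w_0},e^{-\psi})$ to a holomorphic $F$ on $\Omega$ with controlled fibrewise norm via an Ohsawa--Takegoshi type theorem, then use that $\log|(\xi^{(N)}\cdot F)(z,w)|^2$ is plurisubharmonic --- works verbatim for arbitrary $\xi\in\ell_1$, because the $\ell_1$-condition together with Cauchy estimates already forces $\sum_{\alpha}\xi_{\alpha}\,\partial_z^{\alpha}F(z,w)/\alpha!$ to converge locally uniformly to a holomorphic function of $(z,w)$. Thus once the real work in step~(i) is done, restricting to finite $\xi^{(N)}$ buys nothing and step~(ii) becomes superfluous. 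This is precisely how \cite{BG1,BG2} proceed: they run the extension argument directly for general $\xi$.

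There is also a small gap in step~(ii) as you have written it. You establish locally uniform convergence of $K^{\psi}_{\xi^{(N)},w}(z)$ to $K^{\psi}_{\xi,w}(z)$, but then appeal to ``local uniform limits of plurisubharmonic functions are plurisubharmonic'' for the \emph{logarithms}. At points where $K^{\psi}_{\xi,w}(z)$ vanishes or is not bounded away from zero, uniform convergence of the kernels does not yield uniform convergence of their logarithms. This is repairable (for instance, deduce that $(K^{\psi}_{\xi,w})^p$ is plurisubharmonic for every $p>0$ by uniform convergence, then let $p\to 0^+$), but the cleanest fix is simply to drop the truncation and argue directly as in \cite{BG1,BG2}.
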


     Now let $D$ be a bounded pseudoconvex domain in $\mathbb{C}^n$ such that $o\in D$, and $\varphi$ a negative plurisubharmonic function on $D$ such that $\varphi(o)=-\infty$. Let $\psi$ be a plurisubharmonic function on $D$. Denote $K_{\xi,\varphi}^{\psi}(t):=K^{\psi}_{\xi,\{\varphi<-t\}\cap D}(o)$ be the $\xi-$Bergman kernel with respect to $\xi\in\ell_1$ on the sub-level set $\{\varphi<-t\}\cap D$ at $o$ for any $t\in [0,+\infty)$. Using Theorem \ref{xi-log-psh}, it can be obtained the following proposition holds.
     
     \begin{Proposition}[\cite{BG1,BG2}]\label{log-concavity}
     $\log K_{\xi,\varphi}^{\psi}(t)$ is convex with respect to $t\in [0,+\infty)$.
     \end{Proposition}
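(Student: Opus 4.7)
My plan is to derive the convexity statement as a direct corollary of Theorem \ref{xi-log-psh} by introducing an auxiliary holomorphic variable $w$ whose real part plays the role of $t$, so that the one-parameter family of sub-level sets $\{\varphi<-t\}\cap D$ is realized as the family of fibers of a suitable pseudoconvex domain in $\mathbb{C}^{n+1}$.

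First, I would set
\[\Omega := \{(z,w) \in D \times \mathbb{C} : \varphi(z) + \mathrm{Re}\,w < 0\}.\]
Since $(z,w)\mapsto\varphi(z) + \mathrm{Re}\,w$ is plurisubharmonic on the pseudoconvex domain $D \times \mathbb{C}$, its sub-level set $\Omega$ is pseudoconvex. Each fiber $\Omega_w = \{\varphi < -\mathrm{Re}\,w\} \cap D$ is bounded (being contained in $D$) and contains $o$ for every $w\in\mathbb{C}$, because $\varphi(o) = -\infty$; hence the geometric hypotheses of Theorem \ref{xi-log-psh} are met. I would then lift the weight by $\tilde\psi(z,w) := \psi(z)$, which is plurisubharmonic on $\Omega$.

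Next I would invoke Theorem \ref{xi-log-psh} for $(\Omega,\tilde\psi)$ to conclude that $(z,w) \mapsto \log K^{\tilde\psi}_{\xi,\Omega_w}(z)$ is plurisubharmonic on $\Omega$. Specializing at $z = o$ gives the identity
\[\log K^{\psi}_{\xi,\varphi}(\mathrm{Re}\,w) \;=\; \log K^{\tilde\psi}_{\xi,\Omega_w}(o),\]
and the right-hand side is therefore subharmonic in $w\in\mathbb{C}$.

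Finally, I would use the elementary fact that a subharmonic function on $\mathbb{C}$ depending only on $\mathrm{Re}\,w$ must be convex in $\mathrm{Re}\,w$: writing it as $v(\mathrm{Re}\,w)$, the distributional Laplacian reads $v''(\mathrm{Re}\,w) \ge 0$, so $v$ is convex on $\mathbb{R}$. Restricting to $t \in [0,+\infty)$ yields the desired convexity of $\log K^{\psi}_{\xi,\varphi}(t)$. The only substantive point to verify is that the lifted pair $(\Omega,\tilde\psi)$ genuinely satisfies all the hypotheses of Theorem \ref{xi-log-psh}, and I do not expect any real obstacle here; once that is in place the one-dimensional reduction is automatic.
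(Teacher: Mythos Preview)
Your proposal is correct and follows exactly the approach the paper indicates: Proposition \ref{log-concavity} is stated as a consequence of Theorem \ref{xi-log-psh} (with the detailed proof deferred to \cite{BG1,BG2}), and the standard way to extract it is precisely the auxiliary-variable construction you describe, realizing $\{\varphi<-t\}\cap D$ as the fibers of the pseudoconvex domain $\Omega=\{(z,w):\varphi(z)+\mathrm{Re}\,w<0\}$ and then using that a subharmonic function of $w$ depending only on $\mathrm{Re}\,w$ is convex.
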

     
    Combining with an $L^2$ estimate result of solutions of $\bar{\partial}-$equations, Bao-Guan \cite{BG2} got the following result.
     \begin{Proposition}[\cite{BG1,BG2}]\label{log-increasing}
     Assume that $\mathcal{I}(\varphi+\psi)_o\neq\mathcal{I}(\psi)_o$ and $\xi\in\ell_{\mathcal{I}(\varphi+\psi)_o}$, then $-\log K_{\xi,\varphi}^{\psi}(t)+t$ is increasing with respect to $t\in [0,+\infty)$. 
     \end{Proposition}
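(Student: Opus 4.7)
The plan is to prove the equivalent pointwise inequality
\[
K_{\xi,\varphi}^\psi(t)\le e^{\,t-s}\,K_{\xi,\varphi}^\psi(s)\qquad (0\le s<t),
\]
which by duality in the definition of $K_{\xi,\varphi}^\psi$ amounts to the following extension statement: given any $f\in A^2(\{\varphi<-t\}\cap D,e^{-\psi})$ with $(\xi\cdot f)(o)=1$, one has to exhibit $F\in A^2(\{\varphi<-s\}\cap D,e^{-\psi})$ with $(\xi\cdot F)(o)=1$ and $\int|F|^2 e^{-\psi}\le e^{\,t-s}\int|f|^2 e^{-\psi}$. I would build $F$ by a $\bar\partial$-argument on the pseudoconvex set $\{\varphi<-s\}\cap D$, in the spirit of the weighted $L^2$ scheme referred to in the paragraph preceding the proposition and used in \cite{BG1,BG2}.

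Concretely, I would fix $\delta>0$, choose a smooth cut-off $\chi_\delta:\mathbb R\to[0,1]$ with $\chi_\delta\equiv 1$ on $(-\infty,-t-\delta]$ and $\chi_\delta\equiv 0$ on $[-t,+\infty)$, and set $\tilde f:=\chi_\delta(\varphi)f$, extended by zero to $\{\varphi\ge -t\}$. Then $v:=\bar\partial\tilde f=\chi_\delta'(\varphi)f\,\bar\partial\varphi$ is a smooth $(0,1)$-form supported in the thin annulus $\{-t-\delta<\varphi<-t\}$, where $f$ is still defined. I would then apply a H\"ormander-type weighted $L^2$ existence theorem for $\bar\partial$ on $\{\varphi<-s\}\cap D$ with a weight built from $\psi$, $\varphi$ and an auxiliary plurisubharmonic correction, producing $u$ with $\bar\partial u=v$ and
\[
\int_{\{\varphi<-s\}\cap D}|u|^2 e^{-\psi-\varphi}\ \le\ C_\delta\int|f|^2 e^{-\psi},
\]
with $C_\delta\to e^{\,t-s}$ as $\delta\to 0$; the limiting constant arises from $e^{-\varphi}\le e^{t}$ on the support of $\chi_\delta'(\varphi)$, combined with an $e^{-s}$-factor recovered from the restriction to $\{\varphi<-s\}$. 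Setting $F:=\tilde f-u$ and passing to a limit along $\delta\to 0$ then yields a holomorphic function with the required $L^2$-bound.

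It remains to verify $(\xi\cdot F)(o)=1$, and this is where both hypotheses enter decisively. Since $\varphi(o)=-\infty$, $\chi_\delta(\varphi)\equiv 1$ on a neighbourhood of $o$, so the germ $(\tilde f)_o=f_o$ and $(\xi\cdot\tilde f)(o)=1$; it therefore suffices to show $(\xi\cdot u)(o)=0$. The $L^2$-bound above localises to $\int_U|u|^2 e^{-\psi-\varphi}<+\infty$ on a small neighbourhood $U\ni o$, and by the very definition of the multiplier ideal sheaf this forces the germ $u_o\in\mathcal I(\varphi+\psi)_o$; the assumption $\xi\in\ell_{\mathcal I(\varphi+\psi)_o}$ then means that $\xi$ annihilates this ideal, giving $(\xi\cdot u)(o)=0$ as required. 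The hypothesis $\mathcal I(\varphi+\psi)_o\neq\mathcal I(\psi)_o$ is what guarantees that the enhanced weight $e^{-\psi-\varphi}$ cuts out a strictly smaller ideal at $o$, so that the extra vanishing of $u$ is a non-trivial constraint and the whole construction is non-degenerate.

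I expect the main technical obstacle to be the $L^2$-step itself: producing a single weighted $\bar\partial$-estimate that simultaneously delivers the sharp constant tending to $e^{\,t-s}$ and forces the solution $u$ into the multiplier ideal $\mathcal I(\varphi+\psi)_o$ at $o$. This falls within the Guan--Zhou / Berndtsson framework of twisted $\bar\partial$-estimates, but requires care in the choice of twist and, if necessary, an approximation $\psi_k\downarrow\psi$ to circumvent known pathologies of multiplier ideals in the limit. Once that estimate is in place, the remainder of the proof reduces to a routine cut-off and diagonal-limit argument.
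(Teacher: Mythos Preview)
Your core idea---extend $f$ from $\{\varphi<-t\}$ to the larger sublevel set via a $\bar\partial$-scheme, then use $\xi\in\ell_{\mathcal I(\varphi+\psi)_o}$ to annihilate the correction term $u\in\mathcal I(\varphi+\psi)_o$---is exactly the mechanism behind the proof in \cite{BG1,BG2} (cf.\ Lemma~\ref{L2method} here). Where you diverge is in insisting on the \emph{sharp} constant $e^{t-s}$ for each pair $s<t$. Your heuristic for this constant (``$e^{-\varphi}\le e^t$ on the cutoff support, times an $e^{-s}$ from the domain'') is not how the optimal factor arises: the displayed bound controls $\int|u|^2 e^{-\psi-\varphi}$, not $\int|F|^2 e^{-\psi}$, and no naive combination of these produces $e^{t-s}$. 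Achieving it genuinely requires the full B\l ocki/Guan--Zhou twist you allude to, so this should not be presented as a routine $\delta\to0$ limit.

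The route actually taken in \cite{BG1,BG2} avoids this difficulty. The non-sharp Lemma~\ref{L2method} already produces, for each $t$, an extension $\tilde F\in A^2(D,e^{-\psi})$ with $(\tilde F-f,o)\in\mathcal I(\varphi+\psi)_o$ and $\int_D|\tilde F|^2e^{-\psi}\le C'e^{t}\int_{\{\varphi<-t\}}|f|^2e^{-\psi}$; since $\xi$ kills the difference, this gives $K_{\xi,\varphi}^\psi(t)\le C'e^tK_{\xi,\varphi}^\psi(0)$, i.e.\ $\log K_{\xi,\varphi}^\psi(t)-t$ is \emph{bounded above} on $[0,\infty)$. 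Now invoke Proposition~\ref{log-concavity}: a convex function on $[0,\infty)$ that is bounded above must be non-increasing, so $-\log K_{\xi,\varphi}^\psi(t)+t$ is increasing. Convexity thus does the sharp work that you were loading onto the $\bar\partial$-estimate.

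A minor point: the hypothesis $\mathcal I(\varphi+\psi)_o\neq\mathcal I(\psi)_o$ is not there to make the construction ``non-degenerate'' in your sense. If equality held, $\xi\in\ell_{\mathcal I(\psi)_o}$ would annihilate every germ in $A^2(\cdot,e^{-\psi})$, forcing $K_{\xi,\varphi}^\psi\equiv 0$ and rendering the statement vacuous; the hypothesis simply excludes that trivial case.
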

     
     Here for any proper ideal $I$ of $\mathcal{O}_o$, the subset $\ell_{I}$ of $\ell_1$ contains all nonzero functionals that vanish on $I$, i.e.,
     \[\ell_I:=\{\xi\in\ell_1\setminus\{0\} : (\xi\cdot F)(o)=0,\ \forall (F,o)\in I\},\]
    where $0:=\{0,\ldots,0,\ldots,\}\in\ell_1$. And we denote that $\tilde{\ell}_I:=\ell_I\cup\{0\}$ in the present note.

     \begin{Remark}
        In \cite{BG1, BG2}, combining Proposition \ref{log-increasing} with Lemma \ref{seperate} (see Section \ref{section-prep}), Bao-Guan obtained an effectiveness result of the strong openness property of multiplier ideal sheaves, which is independent of the strong openness property.
     \end{Remark}
    
     Naturally Proposition \ref{log-concavity} and \ref{log-increasing} lead us to consider the limit
     $\lim\limits_{t\rightarrow+\infty}\log K_{\xi,\varphi}^{\psi}(t)/t$
     under the assumptions of Proposition \ref{log-concavity} for functionals not only in $\ell_{\mathcal{I}(\varphi+\psi)_o}$ but also in $\ell_1\setminus \ell_{\mathcal{I}(\varphi+\psi)_o}$. Proposition \ref{log-concavity} implies that the above limit exists, and it is natural to define
     \begin{Definition}[$\xi-$complex singularity exponent with respect to $\xi$]
     \[\gamma_{o,\xi}(\varphi,\psi):=\lim_{t\rightarrow+\infty}\frac{\log K_{\xi,\varphi}^{\psi}(t)}{t}\in \{-\infty\}\cup[0,+\infty].\]
     \end{Definition}
     We may sometimes simply denote that $\gamma_{\xi}(\varphi,\psi):=\gamma_{o,\xi}(\varphi,\psi)$, and when $\psi\equiv 0$, we denote that $\gamma_{\xi}(\varphi):=\gamma_{o,\xi}(\varphi,0)$.
     
     Considering that Proposition \ref{log-concavity} and \ref{log-increasing} are useful in studying strong openness property (see \cite{BG1, BG2}), it should be convincing that there are relations between the $\xi-$complex singularity exponents  and complex singularity exponents (or log canonical thresholds). This is why we call them $\xi-$complex singularity exponents. The main results of the present note is showing some precise relations between the $\xi-$complex singularity exponents, complex singularity exponents (and jumping numbers).
     
    \subsection{Main results}
    
     Recall that $D$ is a bounded pseudoconvex domain in $\mathbb{C}^n$ such that $o\in D$, and $\varphi$ is a negative plurisubharmonic function on $D$ such that $\varphi(o)=-\infty$. Let $\psi$ be a plurisubharmonic function on $D$. Denote $K_{\xi,\varphi}^{\psi}(t):=K^{\psi}_{\xi,\{\varphi<-t\}\cap D}(o)$ be the $\xi-$Bergman kernel for any $t\in [0,+\infty)$ with respect to any functional $\xi\in\ell_1$.
     
     Actually, the $\xi-$complex singularity exponent (\emph{$\xi-$cse} for short) $\gamma_{\xi}(\varphi,\psi)$ is independent of the choice of the bounded pseudoconvex domain $D$, which is why we do not include $D$ in the notation $\gamma_{\xi}(\varphi,\psi)$. More precisely, we have
     \begin{Theorem}\label{Thm-xicse-local}
    If $\xi\in\tilde{\ell}_{\mathcal{I}(\psi)_o}$ then $\gamma_{\xi}(\varphi,\psi)=-\infty$, and if $\xi\in\ell_1\setminus\tilde{\ell}_{\mathcal{I}(\psi)_o}$, then
        \[\gamma_{\xi}(\varphi,\psi)=\inf\{c\geq 0 : \xi\in \ell_{\mathcal{I}(c\varphi+\psi)_o}\}.\]
     \end{Theorem}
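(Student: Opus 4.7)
The proof naturally splits into two cases according to whether $\xi\in\tilde{\ell}_{\mathcal{I}(\psi)_o}$. Case 1 is immediate: since $\varphi(o)=-\infty$, the sub-level set $\Omega_t:=\{\varphi<-t\}\cap D$ is a neighborhood of $o$ for every $t$, so every $f\in A^2(\Omega_t,e^{-\psi})$ defines a germ at $o$ lying in $\mathcal{I}(\psi)_o$; hence $(\xi\cdot f)(o)=0$, $K^{\psi}_{\xi,\varphi}(t)\equiv 0$, and $\gamma_{\xi}(\varphi,\psi)=-\infty$. In Case 2 ($\xi\in\ell_1\setminus\tilde{\ell}_{\mathcal{I}(\psi)_o}$) I would first observe that $c\mapsto\mathcal{I}(c\varphi+\psi)_o$ is nonincreasing (since $\varphi<0$), so $\{c\geq 0:\xi\in\ell_{\mathcal{I}(c\varphi+\psi)_o}\}$ is upward-closed; let $\gamma^*$ denote its infimum. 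The task is then to bound $\gamma_{\xi}(\varphi,\psi)$ above and below by $\gamma^*$.

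For the upper bound I would fix $c>\gamma^*$ and apply Proposition \ref{log-increasing} with $c\varphi$ in place of $\varphi$; both hypotheses hold because $\xi\in\ell_{\mathcal{I}(c\varphi+\psi)_o}$ by definition of $\gamma^*$, while $\xi\notin\ell_{\mathcal{I}(\psi)_o}$ forces $\mathcal{I}(c\varphi+\psi)_o\subsetneq\mathcal{I}(\psi)_o$. The conclusion of Proposition \ref{log-increasing}, combined with the identity $K^{\psi}_{\xi,c\varphi}(t)=K^{\psi}_{\xi,\varphi}(t/c)$ obtained by rescaling the sub-level set, yields $\log K^{\psi}_{\xi,\varphi}(s)\leq cs+\log K^{\psi}_{\xi,\varphi}(0)$, where the additive constant is finite because $\xi\in\ell_1$ together with Cauchy's estimate makes $f\mapsto(\xi\cdot f)(o)$ a bounded linear functional on $A^2(D,e^{-\psi})$. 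Dividing by $s$ and sending $s\to+\infty$ gives $\gamma_{\xi}(\varphi,\psi)\leq c$, and then $c\searrow\gamma^*$.

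For the lower bound I would fix $c\in[0,\gamma^*)$ and manufacture good test functions for the variational definition of $K^{\psi}_{\xi,\varphi}(t)$. Since $c<\gamma^*$, there exists a germ $F$ with $(\xi\cdot F)(o)\neq 0$ and $\int_{B_{2r}}|F|^2 e^{-c\varphi-\psi}<+\infty$ on some ball $B_{2r}\Subset D$. Pick a cutoff $\chi\in C_c^{\infty}(B_{3r/2})$ with $\chi\equiv 1$ on $B_r$, and for each $N\in\mathbb{N}$ use H\"ormander's theorem on the pseudoconvex domain $\Omega_t$ with weight $\psi+2N\log|z|$ (plus a strictly plurisubharmonic perturbation) to solve $\bar{\partial}u_{t,N}=F\bar{\partial}\chi$. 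The basic pointwise inequality $e^{-\psi}\leq e^{-ct}e^{-c\varphi-\psi}$ on $\Omega_t$, together with $\mathrm{supp}\,\bar{\partial}\chi\subset\{|z|\geq r\}$, yields $\int_{\Omega_t}|u_{t,N}|^2 e^{-\psi}\leq C_N e^{-ct}$, and the weighted estimate forces $u_{t,N}$ to vanish to order $\geq N$ at $o$. Setting $f_{t,N}:=\chi F-u_{t,N}\in A^2(\Omega_t,e^{-\psi})$ gives
\[(\xi\cdot f_{t,N})(o)=(\xi\cdot F)(o)-\sum_{|\alpha|\geq N}\xi_{\alpha}\,u_{t,N}^{(\alpha)}(o)/\alpha!.\]
Cauchy's estimates for $u_{t,N}$ on a small ball around $o$, combined with the super-exponential decay of $|\xi_{\alpha}|$ forced by $\xi\in\ell_1$, show that the tail sum is bounded by $|(\xi\cdot F)(o)|/2$ once $N$ is large (independently of $t$), whence $K^{\psi}_{\xi,\varphi}(t)\geq Ce^{ct}$ and therefore $\gamma_{\xi}(\varphi,\psi)\geq c$; letting $c\nearrow\gamma^*$ closes the argument.

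The main obstacle is the last step of the lower-bound construction. One must choose a single $N$ that simultaneously controls $|(\xi\cdot u_{t,N})(o)|$ (which shrinks as $N$ grows, through the order-$N$ vanishing) while keeping the exponential decay $\leq C_N e^{-ct}$ of the weighted $L^2$-norm tractable. The H\"ormander constants $C_N$ and the Cauchy-estimate prefactors grow like powers of $r^{-N}$ and $\rho^{-N}$, but the defining property $\sum_{\alpha}|\xi_{\alpha}|\rho^{|\alpha|}<+\infty$ for \emph{every} $\rho>0$ forces $\sum_{|\alpha|\geq N}|\xi_{\alpha}|\rho^{-|\alpha|}\to 0$ faster than any geometric rate; this super-polynomial decay is the decisive leverage that the $\ell_1$ hypothesis supplies and is the technical heart of the argument.
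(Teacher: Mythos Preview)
Your treatment of Case~1 and of the upper bound in Case~2 is correct and coincides with the paper's argument (Proposition~\ref{log-increasing} applied to $c\varphi$ together with the rescaling $K^{\psi}_{\xi,c\varphi}(t)=K^{\psi}_{\xi,\varphi}(t/c)$ is exactly Lemma~\ref{ctimes}(ii) combined with Lemma~\ref{gamma>1}(ii)).

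The lower bound, however, has a genuine gap. You solve $\bar\partial u_{t,N}=F\,\bar\partial\chi$ on the sub-level set $\Omega_t$ and then invoke ``Cauchy's estimates for $u_{t,N}$ on a small ball around $o$''. But $u_{t,N}$ is only holomorphic on $\Omega_t$, and as $t\to+\infty$ the domain $\Omega_t$ need not contain any fixed ball around $o$; for instance if $\varphi=\log|z_1|$ then $\Omega_t=\{|z_1|<e^{-t}\}\cap D$. Hence the Cauchy radius $\rho$ must shrink with $t$, and the coefficient bound $|u_{t,N}^{(\alpha)}(o)/\alpha!|\lesssim M_{t,N}\,\rho_t^{-|\alpha|}$ carries a factor $\rho_t^{-|\alpha|}$ that blows up. The $\ell_1$ condition gives $\sum_{\alpha}|\xi_{\alpha}|\rho^{-|\alpha|}<\infty$ only for each \emph{fixed} $\rho>0$; it provides no uniform control as $\rho\to 0$, so the claim that the tail is small ``once $N$ is large (independently of $t$)'' is not justified.

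The paper avoids this by running the $\bar\partial$/extension argument once on the \emph{fixed} domain $D$ with weight $c\varphi+\psi$ (this is Lemma~\ref{Lem-jet_ext} applied with $\phi=c\varphi+\psi$, proved via Lemma~\ref{L2method}). One obtains a single $F\in A^2(D,e^{-c\varphi-\psi})$ with $(\xi\cdot F)(o)\neq 0$; now the Cauchy estimates are carried out on a fixed polydisc inside $D$, so the super-decay of $\xi$ does its job exactly as you describe. This $F$ then serves as a test function on every $\Omega_t$, and the inequality $e^{-\psi}\le e^{-ct}e^{-c\varphi-\psi}$ on $\Omega_t$ immediately yields $K^{\psi}_{\xi,\varphi}(t)\ge |(\xi\cdot F)(o)|^2 e^{ct}/\int_D|F|^2e^{-c\varphi-\psi}$, hence $\gamma_{\xi}(\varphi,\psi)\ge c$. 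In short, your construction should be transplanted from $\Omega_t$ to $D$ (adding $c\varphi$ to the H\"ormander weight); after that change your outline becomes the paper's Lemma~\ref{gamma>1}(iii).
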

     
    Theorem  \ref{Thm-xicse-local} shows that the asymptotic property of the $\xi-$Bergman kernels on sub-level sets of plurisubharmonic functions actually comes from how the functionals behave.
    
    Next we give some results which show the relations between the $\xi-$cse and the complex singularity exponents and jumping numbers.
    
    Let $(F,o)$ be a holomorphic germ near $o$, and denote that
    \[c_o^F(\varphi,\psi):=\sup\{c\geq 0 : |F|^2e^{-c\varphi-\psi} \ \text{is\ locally\ } L^1 \ \text{near\ } o\}.\]
   We give the following result.
   
    \begin{Theorem}\label{thm-gln-jn}
   Assume $c_o^F(\varphi,\psi)>0$, then
    \[c^F_o(\varphi,\psi)=\inf_{\xi\in\ell_1, \ (\xi\cdot F)(o)\neq 0}\gamma_{\xi}(\varphi,\psi).\]
    \end{Theorem}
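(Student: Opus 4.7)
The plan is to derive both inequalities from Theorem \ref{Thm-xicse-local} combined with the functional-separation lemma (Lemma \ref{seperate}), so that no further $L^2$ machinery is needed at this stage.

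First I would verify that any $\xi \in \ell_1$ with $(\xi\cdot F)(o) \neq 0$ automatically lies in $\ell_1 \setminus \tilde{\ell}_{\mathcal{I}(\psi)_o}$, so that the nontrivial clause of Theorem \ref{Thm-xicse-local} applies to it. Indeed, since $\varphi \leq 0$ and $c_o^F(\varphi,\psi) > 0$, for any $0 < c < c_o^F(\varphi,\psi)$ we have $|F|^2 e^{-\psi} \leq |F|^2 e^{-c\varphi-\psi}$, and the right-hand side is locally integrable near $o$; hence $(F,o) \in \mathcal{I}(\psi)_o$. If $\xi$ vanished on $\mathcal{I}(\psi)_o$, then $(\xi\cdot F)(o) = 0$, a contradiction. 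Thus for every relevant $\xi$,
\[
\gamma_{\xi}(\varphi,\psi) = \inf\{c \geq 0 : \xi \in \ell_{\mathcal{I}(c\varphi+\psi)_o}\}.
\]

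The lower bound $c_o^F(\varphi,\psi) \leq \inf_\xi \gamma_\xi(\varphi,\psi)$ is then immediate. Fix $\xi$ with $(\xi \cdot F)(o) \neq 0$ and any $c < c_o^F(\varphi,\psi)$. By definition of $c_o^F$, $(F,o) \in \mathcal{I}(c\varphi+\psi)_o$, so $\xi \notin \ell_{\mathcal{I}(c\varphi+\psi)_o}$ (otherwise $(\xi\cdot F)(o)=0$); therefore $\gamma_\xi(\varphi,\psi) \geq c$. Letting $c \uparrow c_o^F(\varphi,\psi)$ and then taking the infimum over $\xi$ yields one direction.

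For the upper bound, for every $c > c_o^F(\varphi,\psi)$ the germ $(F,o)$ is not in $\mathcal{I}(c\varphi+\psi)_o$, which is therefore a proper ideal of $\mathcal{O}_o$. Lemma \ref{seperate} then supplies a nonzero $\xi_c \in \ell_{\mathcal{I}(c\varphi+\psi)_o}$ with $(\xi_c\cdot F)(o) \neq 0$. Since $c$ lies in the set whose infimum defines $\gamma_{\xi_c}(\varphi,\psi)$, we obtain $\gamma_{\xi_c}(\varphi,\psi) \leq c$, and hence $\inf_\xi \gamma_\xi(\varphi,\psi) \leq c$. Sending $c \downarrow c_o^F(\varphi,\psi)$ finishes the proof.

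The only non-routine ingredient is Lemma \ref{seperate}: separating $F$ from a multiplier ideal by an element of $\ell_1$, which is the duality-theoretic heart of the argument and is treated in the preparation section. Aside from that, one should be mildly careful with degenerate cases, namely $c_o^F(\varphi,\psi) = +\infty$ (where both sides equal $+\infty$ and the argument collapses automatically) and the possibility that the set $\{c \geq 0 : \xi \in \ell_{\mathcal{I}(c\varphi+\psi)_o}\}$ is empty for some $\xi$ (in which case $\gamma_\xi(\varphi,\psi) = +\infty$ and the inequality is trivially valid).
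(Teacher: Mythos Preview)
Your proof is correct and follows essentially the same route as the paper: both directions are deduced from Theorem \ref{Thm-xicse-local} together with the separation lemma (Lemma \ref{seperate}), with the preliminary observation that $(\xi\cdot F)(o)\neq 0$ forces $\xi\notin\tilde{\ell}_{\mathcal{I}(\psi)_o}$ since $(F,o)\in\mathcal{I}(\psi)_o$. Your write-up is slightly more explicit about the edge cases ($c_o^F(\varphi,\psi)=+\infty$, empty index set), but the argument is the same.
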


     Particularly, denote the complex singularity exponent (\emph{cse} for short, see \cite{Tian})
     \[c_o(\varphi):=\sup\{c\geq 0 : e^{-c\varphi} \ \text{is\ locally\ } L^1 \ \text{near\ } o\}.\]
     We can obtain the following corollary of Theorem \ref{thm-gln-jn} related to the $\xi-$cse and cse.
    
    \begin{Corollary}\label{cor-gln-lct}
    Assume $c_o(\varphi)>0$, then
    \[c_o(\varphi)=\inf_{\xi\in \ell_1\setminus\{0\}}\gamma_{\xi}(\varphi).\]
    \end{Corollary}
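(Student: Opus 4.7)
The plan is to deduce the corollary as a direct specialization of Theorem \ref{thm-gln-jn} with $\psi \equiv 0$ and $F \equiv 1$, and then to close a small gap between the two statements using the local characterization provided by Theorem \ref{Thm-xicse-local}.

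First, observe that with $F \equiv 1$ one has $c_o^1(\varphi,0) = c_o(\varphi)$, and from the definition of the action $(\xi \cdot 1)(o) = \xi_0$. The assumption $c_o(\varphi) > 0$ then verifies the hypothesis of Theorem \ref{thm-gln-jn}, and substituting into its conclusion yields
\[c_o(\varphi) = \inf_{\xi \in \ell_1,\ \xi_0 \neq 0} \gamma_\xi(\varphi).\]
The discrepancy with Corollary \ref{cor-gln-lct} is only that the corollary takes the infimum over the larger set $\ell_1 \setminus \{0\}$. Inclusion of index sets immediately gives the trivial direction $\inf_{\xi \in \ell_1 \setminus \{0\}} \gamma_\xi(\varphi) \leq c_o(\varphi)$.

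For the reverse inequality it suffices to prove $\gamma_\xi(\varphi) \geq c_o(\varphi)$ for every nonzero $\xi \in \ell_1$, including those with $\xi_0 = 0$. Here I will invoke Theorem \ref{Thm-xicse-local} with $\psi \equiv 0$: since $\mathcal{I}(0)_o = \mathcal{O}_o$ and any functional annihilating every monomial germ $(z-o)^\alpha$ must be zero, the set $\tilde{\ell}_{\mathcal{O}_o}$ reduces to $\{0\}$. So for nonzero $\xi$ the theorem gives $\gamma_\xi(\varphi) = \inf\{c \geq 0 : \xi \in \ell_{\mathcal{I}(c\varphi)_o}\}$. Whenever $c < c_o(\varphi)$ we have $1 \in \mathcal{I}(c\varphi)_o$, i.e.\ $\mathcal{I}(c\varphi)_o = \mathcal{O}_o$, and the same monomial argument shows no nonzero $\xi$ lies in $\ell_{\mathcal{O}_o}$. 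Therefore such $c$ does not participate in the infimum and $\gamma_\xi(\varphi) \geq c_o(\varphi)$.

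I do not expect a significant obstacle, since the substantive content is already carried by Theorem \ref{thm-gln-jn}. The only delicate point is the bookkeeping that converts the condition $\xi \in \ell_{\mathcal{I}(c\varphi)_o}$ into properness of $\mathcal{I}(c\varphi)_o$ at small $c$, which is dispatched cleanly by the interpretation of $c_o(\varphi)$ as the threshold for $1 \in \mathcal{I}(c\varphi)_o$ together with the observation that $\ell_{\mathcal{O}_o} = \emptyset$.
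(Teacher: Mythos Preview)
Your proof is correct. The paper's route is slightly different: rather than specializing Theorem~\ref{thm-gln-jn} to the single germ $F\equiv 1$ and then patching the gap for functionals with $\xi_0=0$ via Theorem~\ref{Thm-xicse-local}, the paper takes the infimum of $c_o^F(\varphi,\psi)$ over \emph{all} nonzero germs $F$, applies Theorem~\ref{thm-gln-jn} to each, and then observes that $\bigcup_{F\neq 0}\{\xi:(\xi\cdot F)(o)\neq 0\}=\ell_1\setminus\{0\}$, which collapses the double infimum to $\inf_{\xi\in\ell_1\setminus\{0\}}\gamma_\xi(\varphi,\psi)$ in one stroke. Both arguments ultimately rest on the same elementary fact that a nonzero $\xi\in\ell_1$ cannot annihilate all of $\mathcal{O}_o$ (equivalently $\ell_{\mathcal{O}_o}=\emptyset$); the paper packages this as a surjectivity statement about the union of index sets, while you package it as the vanishing of the set $\{c<c_o(\varphi):\xi\in\ell_{\mathcal{I}(c\varphi)_o}\}$ inside the characterization of Theorem~\ref{Thm-xicse-local}. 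The paper's version is marginally cleaner in that it avoids the second appeal to Theorem~\ref{Thm-xicse-local}, but your version has the minor advantage of making explicit why the extra functionals with $\xi_0=0$ cannot lower the infimum.
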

    
    According to Theorem \ref{thm-gln-jn} and Corollary \ref{cor-gln-lct}, it is natural to ask:
    
    \begin{Question}\label{Q:inf=min}
    Is the \emph{$\inf$} in Theorem \ref{thm-gln-jn} (or Corollary \ref{cor-gln-lct}) actually \emph{$\min$}, i.e., can it be achieved by some functional $\xi$?
    \end{Question}
    
    We answer the Question \ref{Q:inf=min} affirmatively. More precisely, we prove the following result related to Theorem \ref{thm-gln-jn}.

    \begin{Theorem}\label{thm-closed-jn}
        Assume $F$ is a local holomorphic germ near $o$ such that $c_o^F(\varphi,\psi)>0$. Then there exists $\xi\in\ell_1$ such that $(\xi\cdot F)(o)\neq 0$ and
         \[c_o^F(\varphi,\psi)=\gamma_{\xi}(\varphi,\psi).\]
        \end{Theorem}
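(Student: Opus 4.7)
My plan is to combine Theorem \ref{Thm-xicse-local}, which characterizes $\gamma_\xi(\varphi,\psi)$ as an infimum over multiplier ideals, with a duality/separation argument in the analytic Krull topology. Set $c_0 := c_o^F(\varphi,\psi)$ and, for each $c \geq 0$, write $I_c := \mathcal{I}(c\varphi+\psi)_o$. Because $\varphi \leq 0$, the family $\{I_c\}$ is decreasing in $c$, so
\[
I_+ := \bigcup_{c > c_0} I_c
\]
is a totally ordered union of ideals and therefore itself an ideal of $\mathcal{O}_o$.

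The key point is that $(F,o) \notin I_+$: by the very definition of $c_o^F(\varphi,\psi)$ as a supremum, $|F|^2 e^{-c\varphi-\psi}$ fails to be locally $L^1$ near $o$ for every $c > c_0$, so $(F,o) \notin I_c$ for each such $c$. Applying the separation lemma (Lemma \ref{seperate}) to the ideal $I_+$ and the germ $F$, I would then obtain $\xi \in \ell_1$ which vanishes on $I_+$ and satisfies $(\xi \cdot F)(o) \neq 0$.

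It remains to verify $\gamma_\xi(\varphi,\psi) = c_0$. Since $c_0 > 0$ and $\varphi \leq 0$, choosing any $0 < c < c_0$ yields $|F|^2 e^{-\psi} \leq |F|^2 e^{-c\varphi-\psi}$, which is locally $L^1$; hence $(F,o) \in I_0 = \mathcal{I}(\psi)_o$, and because $(\xi \cdot F)(o) \neq 0$ one also has $\xi \notin \tilde{\ell}_{I_0}$. Theorem \ref{Thm-xicse-local} then gives $\gamma_\xi(\varphi,\psi) = \inf\{c \geq 0 : \xi \in \ell_{I_c}\}$. For $c > c_0$, $I_c \subseteq I_+$ forces $\xi \in \ell_{I_c}$, so $\gamma_\xi \leq c_0$; for $c < c_0$, the membership $F \in I_c$ together with $(\xi \cdot F)(o) \neq 0$ forces $\xi \notin \ell_{I_c}$, so $\gamma_\xi \geq c_0$.

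The main obstacle is the separation step: producing $\xi \in \ell_1$ that vanishes on the ideal $I_+$ while not vanishing on the germ $F \notin I_+$. This is exactly what Lemma \ref{seperate} supplies, and it rests on the duality $\ell_1 = \mathcal{O}_o^*$ under the analytic Krull topology (see \cite{Dembook, BG1}) together with the closedness of ideals of $\mathcal{O}_o$ in this topology. Once that separation is granted, the computation of $\gamma_\xi$ above is routine bookkeeping with the formula from Theorem \ref{Thm-xicse-local}.
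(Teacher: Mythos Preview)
Your proposal is correct and essentially identical to the paper's proof: both define the ideal $I_+=\bigcup_{c>c_0}\mathcal{I}(c\varphi+\psi)_o$, observe that $(F,o)\notin I_+$, apply Lemma~\ref{seperate} to separate $F$ from $I_+$ by some $\xi\in\ell_1$, and then verify $\gamma_\xi(\varphi,\psi)=c_0$. The only cosmetic difference is that you compute $\gamma_\xi$ directly from the formula in Theorem~\ref{Thm-xicse-local}, whereas the paper obtains the upper bound via Lemma~\ref{gamma>1}(ii) and Lemma~\ref{ctimes}(ii) and the lower bound via Theorem~\ref{thm-gln-jn}; these are equivalent bookkeeping paths.
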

    
    Related to Corollary \ref{cor-gln-lct}, we obtain the following corollary of Theorem \ref{thm-closed-jn}.

    \begin{Corollary}\label{cor-closed-lct}
        Assume $c_o(\varphi)>0$, then there exists $\xi\in\ell_1$ such that
     \[c_o(\varphi)=\gamma_{\xi}(\varphi).\]
    \end{Corollary}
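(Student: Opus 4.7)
The plan is to obtain Corollary \ref{cor-closed-lct} as a direct specialization of Theorem \ref{thm-closed-jn}, taking the holomorphic germ to be $F \equiv 1$ and the weight to be $\psi \equiv 0$. With these choices the quantity $c_o^F(\varphi,\psi)$ collapses to the classical complex singularity exponent:
\[
c_o^{1}(\varphi, 0) = \sup\{c \geq 0 : |1|^2 e^{-c\varphi} \text{ is locally } L^1 \text{ near } o\} = c_o(\varphi),
\]
so the hypothesis $c_o(\varphi) > 0$ in the corollary matches exactly the hypothesis $c_o^F(\varphi,\psi) > 0$ in the theorem.

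Applying Theorem \ref{thm-closed-jn} then yields some $\xi \in \ell_1$ with $(\xi \cdot 1)(o) \neq 0$ and $c_o^{1}(\varphi, 0) = \gamma_\xi(\varphi, 0)$. Unwinding the notational convention $\gamma_\xi(\varphi) := \gamma_{o,\xi}(\varphi, 0)$ introduced just after the definition of the $\xi$-cse, the right-hand side is simply $\gamma_\xi(\varphi)$, which gives the desired identity $c_o(\varphi) = \gamma_\xi(\varphi)$.

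Essentially no extra work is required: a constant function is tautologically a local holomorphic germ at $o$, and when $F \equiv 1$ the Taylor coefficients are $a_0 = 1$ and $a_\alpha = 0$ for $\alpha \neq 0$, so $(\xi \cdot F)(o) = \xi_0$ and the non-vanishing condition $(\xi \cdot F)(o) \neq 0$ just becomes $\xi_0 \neq 0$, which is automatically part of the conclusion of Theorem \ref{thm-closed-jn}. Thus the only real obstacle lies upstream in the proof of Theorem \ref{thm-closed-jn}; the corollary itself is a bookkeeping step and needs no additional argument.
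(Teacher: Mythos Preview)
Your proposal is correct and matches the paper's approach: the paper likewise obtains Corollary~\ref{cor-closed-lct} by specializing Theorem~\ref{thm-closed-jn}, first taking $F\equiv 1$ (to get Corollary~\ref{thm-closed-lct}) and then $\psi\equiv 0$. You simply perform both specializations at once, which is equally valid.
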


    The functional $\xi\in\ell_1$ in Corollary \ref{cor-closed-lct} can actually be chosen as $\xi=(1,0,\ldots,0,\ldots)$, which could imply an asymptotic property for the original Bergman kernels on sub-level sets of plurisubharmonic functions (see Section \ref{ClosedSection}, Example \ref{ex-xi0} and Corollary \ref{Cor-limlogK(t)/t}).

    We would like to call that the affirmative answer of Question \ref{Q:inf=min} is the closedness property of $\xi-$cse. We would also state the relations between Question \ref{Q:inf=min} and the strong openness property of multiplier ideal sheaves in this note.

    \emph{The above results show that the local properties of plurisubharmonic functions at singular points can be described by the asymptotics of the global objects: such as the $\xi-$Bergman kernels.} 
    
    In the present note, we shall also give some calculations and examples for $\xi-$Bergman kernels and $\xi-$cse. We will do some calculations on the $\xi-$Bergman kernels and $\xi-$cse related to toric plurisubharmonic functions in Section \ref{section_toric}. The valuative point of view on the $\xi-$cse is shown in Section \ref{valuative-section}. Some classical results will be reproved in these two sections as examples (see Corollary \ref{int(P(varphi))} in Section \ref{section_toric}, and Corollary \ref{cor-nu} in Section \ref{valuative-section}).

    In addition, we will also generalize some properties of cse to $\xi-$cse (see Section \ref{restriction-section}). For example, we give the restriction formula (Theorem \ref{thm-restriction}) and the subadditivity property (Theorem \ref{thm-subadd}) of the $\xi-$cse which is similar to those of the complex singularity exponents.
    
    \section{Preparations}\label{section-prep}
    We do some preparations in this section for proving the main results.
    
    We need the following Lemma.
    \begin{Lemma}\label{Lem-jet_ext}
        Let $D$ be a bounded pseudoconvex domain in $\mathbb{C}^n$ such that $o\in D$, and $\phi$ a plurisubharmonic function on $D$. Let $\xi\in\ell_1$. If $\xi\in\ell_1\setminus\tilde{\ell}_{\mathcal{I}(\phi)_o}$, then there exists $F\in A^2(D,e^{-\phi})$ such that $(\xi\cdot F)(o)\neq 0$. 
    \end{Lemma}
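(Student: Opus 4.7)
My plan is to apply Hörmander's $L^2$-extension with a carefully chosen singular weight to turn a witnessing germ at $o$ into a global section of $A^2(D,e^{-\phi})$, and then to transfer the non-vanishing of $\xi$ from the germ to the global section using the rapid decay built into the definition of $\ell_1$. To set up, since $\xi \neq 0$ does not vanish on $\mathcal{I}(\phi)_o$, I would first pick a holomorphic germ $(G,o) \in \mathcal{I}(\phi)_o$ with $(\xi\cdot G)(o) \neq 0$ and shrink to a ball so that $G$ is holomorphic on $B_r(o)$, $\overline{B_r(o)}\subset D$, and $\int_{B_r}|G|^2 e^{-\phi}<\infty$. Then fix a cut-off $\chi \in C_c^\infty(B_r(o))$ with $\chi \equiv 1$ on $B_{r/2}(o)$ and put $F_0 := \chi G$.

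For each integer $k \geq 1$, I would apply Hörmander's theorem on the bounded pseudoconvex domain $D$ with the plurisubharmonic weight $\Phi_k := \phi + 2(n+k)\log|z-o|$ to solve $\bar\partial u_k = \bar\partial F_0$ with an $L^2$-estimate of the form
\[
\int_D |u_k|^2 e^{-\phi}|z-o|^{-2(n+k)} \leq C \int_D|G\bar\partial\chi|^2 e^{-\phi}|z-o|^{-2(n+k)}.
\]
Because $\bar\partial\chi$ is supported in the annulus $r/2 \leq |z-o| \leq r$, the right-hand side is at most $M_0(r/2)^{-2(n+k)}$ for a constant $M_0$ independent of $k$. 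Restricting the left-hand side to $B_{r/4}(o)$, where $|z-o|^{-2(n+k)} \geq (r/4)^{-2(n+k)}$, gives $\int_{B_{r/4}}|u_k|^2 e^{-\phi} \leq M_1\cdot 4^{-(n+k)}$. Upper semicontinuity of $\phi$ on $\overline{B_{r/4}(o)}$ bounds $\phi$ above there, hence $\int_{B_{r/4}}|u_k|^2 \leq M_2\cdot 4^{-k}$; since $u_k$ is holomorphic on $B_{r/2}(o)$ (because $\bar\partial F_0 \equiv 0$ there), the sub-mean value inequality upgrades this to the pointwise estimate $\sup_{B_{r/8}(o)}|u_k| \leq C_1\cdot 2^{-k}$.

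Finally, set $F := F_0 - u_k$; it is holomorphic on $D$ and lies in $A^2(D,e^{-\phi})$ (the boundedness of $|z-o|^{2(n+k)}$ on $D$ converts the weighted estimate for $u_k$ into an unweighted one). On $B_{r/8}(o)$, $F = G - u_k$, so Cauchy estimates on the sphere $|z-o|=r/16$ yield $|a_\alpha(u_k)| \leq C_1\cdot 2^{-k}(16/r)^{|\alpha|}$ for the Taylor coefficients, whence
\[
|(\xi\cdot F)(o) - (\xi\cdot G)(o)| \leq \sum_\alpha |\xi_\alpha||a_\alpha(u_k)| \leq C_1\cdot 2^{-k}\sum_\alpha|\xi_\alpha|(16/r)^{|\alpha|},
\]
and the last sum is finite by the very definition of $\ell_1$ (with $\rho=16/r$). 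Choosing $k$ large enough that the right-hand side is smaller than $|(\xi\cdot G)(o)|$ produces the desired $F$. The delicate point is exactly this final step: matching only a finite-order jet of $G$ a priori says nothing about $(\xi\cdot F)(o)$, but the geometric $2^{-k}$ decay of $u_k$ beats the mild $\ell_1$-growth of $|\xi_\alpha|(16/r)^{|\alpha|}$; this is precisely why the extension must be carried out with the sharp singular weight $2(n+k)\log|z-o|$ rather than a bounded one.
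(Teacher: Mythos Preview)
Your proof is correct and rests on the same core idea as the paper's: choose a germ $G\in\mathcal{I}(\phi)_o$ witnessing $(\xi\cdot G)(o)\neq 0$, run a $\bar\partial$-argument with the singular weight $\phi+2(n+k)\log|z-o|$ to produce a global $F\in A^2(D,e^{-\phi})$ close to $G$ near $o$, and invoke the $\ell_1$-condition to carry the non-vanishing from $G$ to $F$. The execution differs in two points worth noting. First, the paper calls its packaged extension Lemma~\ref{L2method} rather than H\"ormander with a cut-off; that lemma delivers an $F_N$ whose Taylor jet agrees with $G$ \emph{exactly} through order $N$, so only the tail $\sum_{|\alpha|>N}\xi_\alpha c_{\alpha,N}$ must be handled. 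Second, because the paper's $L^2$ bound on $F_N$ grows like $(R/r)^{2N+2n}$, controlling that tail requires the extra trick of absorbing this growth via $(R/r)^{2N}\le (R/r)^{2|\alpha|}$ for $|\alpha|>N$, which is then fed into the $\ell_1$-sum with $\rho=R^2/r^3$. Your route sidesteps this: the ratio of the annulus radius to the inner-ball radius yields a genuine decay $\sup_{B_{r/8}}|u_k|\lesssim 2^{-k}$, so Cauchy bounds on \emph{all} coefficients of $u_k$ shrink uniformly and a single application of the $\ell_1$-condition (with $\rho=16/r$) finishes. The upshot is that your argument is slightly more self-contained and the final estimate is cleaner, while the paper's version reuses a lemma already on hand; in substance the two proofs are the same.
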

    
    To prove Lemma \ref{Lem-jet_ext}, we recall the following lemma, which was used in \cite{BG1, BG2} to prove Proposition \ref{log-increasing}.
    
    \begin{Lemma}[see \cite{GZeffectiveness, G16}]\label{L2method}
        Let $D$ be a pseudoconvex domain in $\mathbb{C}^n$ such that $o\in D$. Let $\varphi$ be a negative plurisubharmonic function on $D$ such that $\varphi(o)=-\infty$. Let $\psi$ be a plurisubharmonic function on $D$. Let $t_0\in (0,+\infty)$, and $F$ a holomorphic function on $\{\varphi<-t_0\}$ such that
        \begin{equation*}
            \int_{\{\varphi<-t_0\}}|F|^2e^{-\psi}<+\infty.
        \end{equation*}
    Then there exists a holomorphic function $\tilde{F}$ on $D$ such that
    \[(\tilde{F}-F,o)\in\mathcal{I}(\varphi+\psi)_o,\]
    and
    \begin{equation}
        \int_D|\tilde{F}-(1-b_{t_0}(\varphi))F|^2e^{-\psi}\leq C\int_D\mathbb{I}_{\{-t_0-1<\varphi<-t_0\}}|F|^2e^{-\varphi-\psi},
    \end{equation}
    where $b_{t_0}(t)=\int_{-\infty}^t\mathbb{I}_{\{-t_0-1<s<-t_0\}}\mathrm{d}s$, and $C$ is a positive constant.
    \end{Lemma}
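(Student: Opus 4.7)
The plan is to realize $\xi$ on a germ in $\mathcal{I}(\phi)_o$, globalize that germ via Lemma~\ref{L2method} using a weight strongly singular at $o$, and then exploit the $\ell_1$ decay of $\xi$ to pass the evaluation through the extension. The hypothesis $\xi \in \ell_1 \setminus \tilde{\ell}_{\mathcal{I}(\phi)_o}$ means $\xi \neq 0$ and $\xi$ does not annihilate $\mathcal{I}(\phi)_o$, so we can pick a holomorphic germ $G$ on some ball $U = B_{r_U}(o) \Subset D$ with $\int_U |G|^2 e^{-\phi} < \infty$ and $(\xi \cdot G)(o) \neq 0$. The task becomes to upgrade $G$ to a function $F \in A^2(D, e^{-\phi})$ with $(\xi \cdot F)(o) \neq 0$.

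For an integer parameter $k$, set $\varphi_k(z) := 2k \log(|z-o|/R)$ with $R > \sup_{z \in D}|z-o|$, so that $\varphi_k$ is negative plurisubharmonic on $D$ with $\varphi_k(o) = -\infty$, and pick $t_0 := 2k \log(R/r_U)$ so that $\{\varphi_k < -t_0\} = U$. Applying Lemma~\ref{L2method} with $\varphi = \varphi_k$, $\psi = \phi$, and $F = G$ yields $\tilde{G}_k \in \mathcal{O}(D)$ with $(\tilde{G}_k - G, o) \in \mathcal{I}(\varphi_k + \phi)_o$ and an $L^2$-bound whose right-hand side is at most $C e^{t_0+1} \int_U |G|^2 e^{-\phi}$, because $e^{-\varphi_k} \leq e^{t_0+1}$ on the annulus $\{-t_0-1 < \varphi_k < -t_0\}$. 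Combined with the pointwise bound $|(1-b_{t_0}(\varphi_k))G| \leq |G| \cdot \mathbb{I}_U$, the triangle inequality then gives $\tilde{G}_k \in A^2(D, e^{-\phi})$. Moreover, because $\phi$ is locally bounded above near $o$, a direct radial computation yields $\mathcal{I}(\varphi_k + \phi)_o \subseteq \mathfrak{m}_o^{k-n+1}$, so $h_k := \tilde{G}_k - G$ is holomorphic near $o$ and vanishes there to order at least $k-n+1$.

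The main obstacle is that the $L^2$ bound carries a factor $e^{t_0+1} = e(R/r_U)^{2k}$ that blows up with $k$, so a priori $h_k$ could be enormous in sup norm on any fixed ball around $o$. Applying the usual $L^2$-to-$L^\infty$ mean value estimate for holomorphic functions on $B_{r_U/2}(o)$, where $\phi$ is bounded above, one only gets $\sup_{B_{r_U/4}(o)}|h_k| \lesssim (R/r_U)^k$. The resolution is to couple this with the high-order vanishing of $h_k$ via the Schwarz-type bound $\sup_{B_{r'}(o)}|h_k| \lesssim_n (4r'/r_U)^{k-n+1} \sup_{B_{r_U/4}(o)}|h_k|$ (obtained by Cauchy-bounding the Taylor coefficients of $h_k$ on $B_{r_U/4}$ and summing from degree $k-n+1$). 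This gives $\sup_{B_{r'}(o)}|h_k| \lesssim (4Rr'/r_U^2)^{k}$ up to a polynomial factor, which decays geometrically as $k \to \infty$ once we fix $r' < r_U^2/(4R)$. For such a fixed $r'$, Cauchy's coefficient inequality $|a_\alpha^{h_k}| \leq \sup_{B_{r'}(o)}|h_k|/(r')^{|\alpha|}$ together with the defining property $\sum_\alpha |\xi_\alpha|/(r')^{|\alpha|} < \infty$ of $\ell_1$ forces
\[
|(\xi \cdot h_k)(o)| \leq \sup_{B_{r'}(o)}|h_k| \cdot \sum_\alpha \frac{|\xi_\alpha|}{(r')^{|\alpha|}} \xrightarrow[k \to \infty]{} 0.
\]

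Consequently, for all sufficiently large $k$, $(\xi \cdot \tilde{G}_k)(o) = (\xi \cdot G)(o) + (\xi \cdot h_k)(o) \neq 0$, and $F := \tilde{G}_k$ is the desired element of $A^2(D, e^{-\phi})$.
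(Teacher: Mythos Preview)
Your proposal does not address the stated Lemma~\ref{L2method}: that lemma is a cited $\bar\partial$-type estimate from \cite{GZeffectiveness, G16}, and the paper does not prove it at all. What you have actually written is a proof of Lemma~\ref{Lem-jet_ext}, invoking Lemma~\ref{L2method} as a black box---which is precisely the logical role Lemma~\ref{L2method} plays in the paper.

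Read as a proof of Lemma~\ref{Lem-jet_ext}, your argument is correct and follows essentially the same route as the paper's: pick a germ $G\in\mathcal{I}(\phi)_o$ with $(\xi\cdot G)(o)\neq 0$, apply Lemma~\ref{L2method} with the auxiliary weight $2k\log(|z|/R)$ to extend to $\tilde G_k\in A^2(D,e^{-\phi})$ whose jet at $o$ agrees with $G$ to order roughly $k$, and then play the $\ell_1$ decay of $\xi$ against the high-order vanishing of $h_k=\tilde G_k-G$ to beat the factor $(R/r_U)^{2k}$ coming from the $L^2$ bound. The only difference is bookkeeping: the paper works on polydiscs and splits $(\xi\cdot\tilde G_k)(o)$ directly into a low-order part (matching that of $G$) and a high-order tail, absorbing the growth via $(R/r)^{2N}\le (R/r)^{2|\alpha|}$ for $|\alpha|>N$ inside the $\ell_1$ sum; you instead pass through a Schwarz-type sup bound for $h_k$ on a shrunk ball $B_{r'}$ with $r'<r_U^2/(4R)$ before applying Cauchy's inequality and summing against $\xi$. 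Both packagings encode the same tradeoff and give the same conclusion.
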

    
    Now we prove Lemma \ref{Lem-jet_ext}.
    \begin{proof}[Proof of Lemma \ref{Lem-jet_ext}:]
    Since $\xi\in\ell_1\setminus\tilde{\ell}_{\mathcal{I}(\phi)_o}$, there exists some $(f,o)\in\mathcal{I}(\phi)_o$ such that $(\xi\cdot f)(o)\neq 0$. Then we can find some $r>0$ such that $\Delta^n_r\subset\subset D$ and
    \[\int_{\Delta^n_r}|f|^2e^{-\phi}<+\infty.\]
    We can also find some $R>0$ such that $\Delta^n_R\supset D$ since $D$ is bounded. Let $N\in\mathbb{N}_+$ sufficiently large, and denote
    \[\Psi:=2(N+n)\log\left(\frac{|z|}{R}\right),\]
    which is a negative plurisubharmonic function on $D$ such that $\Psi(o)=-\infty$. Let $t_0:=2(N+n)\log\left(\frac{R}{r}\right)\in (0,+\infty)$. Then
    \[\int_{\{\Psi<-t_0\}}|f|^2e^{-\phi}=\int_{\Delta^n_r}|f|^2e^{-\phi}<+\infty.\]
    
    Now it follows from Lemma \ref{L2method} that there exists a holomorphic function $F_N$ on $D$ such that $(F_N-f,o)\in\mathcal{I}(\Psi+\phi)_o$, and
    \begin{equation}\label{proof_jet_equa1}
        \int_D|F_N-(1-b_{t_0}(\Psi))f|^2e^{-\phi}\leq C\int_D\mathbb{I}_{\{-t_0-1<\Psi<-t_0\}}|f|^2e^{-\Psi-\phi}.
    \end{equation}
    On the one hand, we have
    \begin{equation}\label{proof_jet_equa2}
        \begin{split}
        \left(\int_D|F_N-(1-b_{t_0}(\Psi))f|^2e^{-\phi}\right)^{1/2}&\geq \left(\int_D|F_N|^2e^{-\phi}\right)^{1/2}-\left(\int_D|(1-b_{t_0}(\Psi))f|^2e^{-\phi}\right)^{1/2}\\
        &\geq \left(\int_D|F_N|^2e^{-\phi}\right)^{1/2}-\left(\int_{\{\Psi<-t_0\}}|f|^2e^{-\phi}\right)^{1/2}.
        \end{split}
    \end{equation}
    On the other hand, it is clear that
    \begin{equation}\label{proof_jet_equa3}
        \begin{split}
        \int_D\mathbb{I}_{\{-t_0-1<\Psi<-t_0\}}|f|^2e^{-\Psi-\phi}&\leq e^{t_0+1}\int_{\{\Psi<-t_0\}}|f|^2e^{-\phi}\\
        &=e\left(\frac{R}{r}\right)^{2N+2n}\int_{\{\Psi<-t_0\}}|f|^2e^{-\phi}.
        \end{split}
    \end{equation}
    Then according to (\ref{proof_jet_equa1}), (\ref{proof_jet_equa2}), and (\ref{proof_jet_equa3}), we have
    \begin{equation}\label{proof_jet_equa4}
        \int_D|F_N|^2e^{-\phi}\leq C_1\left(\frac{R}{r}\right)^{2N+2n}\int_{\Delta^n_r}|f|^2e^{-\phi}<+\infty,
    \end{equation}
    where $C_1$ is a positive constant independent of $N$. 
    
    Since $(F_N-f,o)\in\mathcal{I}(\Psi+\phi)_o\subset\mathcal{I}(\Psi)_o$ and $\Psi=2(N+n)\log\left(\frac{|z|}{R}\right)$, we can know that
    \[(F_N-f)^{(\alpha)}(o)=0, \ \forall |\alpha|\leq N.\]
    In addition, we write
    \[f(z)=\sum_{\alpha\in\mathbb{N}^n}a_{\alpha}z^{\alpha},\]
    and
    \[F_N(z)=\sum_{\alpha\in\mathbb{N}^n}c_{\alpha,N}z^{\alpha}\]
    on $\Delta^n_r$. Then $a_{\alpha}=c_{\alpha,N}$ for any $|\alpha|\leq N$. According to Cauchy's inequality, we have the estimate
    \[|c_{\alpha,N}|\leq \frac{M_N}{(r/3)^{|\alpha|}},\]
    where
    \[M_N:=\sup_{z\in\overline{\Delta^n_{r/3}}}|F_N(z)|\leq \frac{C_2}{(\pi r^2/9)^n}\int_{\Delta}|F_N|^2e^{-\phi}\leq \frac{C_1C_2}{(\pi r^2/9)^n}\left(\frac{R}{r}\right)^{2N+2n}\int_{\Delta^n_r}|f|^2e^{-\phi},\]
    and $C_2:=\sup_{\Delta^n_{2r/3}}e^{\phi}<+\infty$.
    Thus,
    \[|c_{\alpha,N}|\leq \frac{C_3}{r^{2n+|\alpha|}}\left(\frac{R}{r}\right)^{2N+2n}\int_{\Delta^n_r}|f|^2e^{-\phi}\]
    for some positive constant $C_3$ independent of $N$. Set $\xi=(\xi_{\alpha})_{\alpha}$, then we can obtain
    \begin{equation*}
        \begin{split}
            \sum_{|\alpha|>N}|\xi_{\alpha}c_{\alpha,N}|&\leq \frac{C_3 R^{2n}}{r^{4n}}\sum_{|\alpha|>N}\frac{\xi_{\alpha}}{r^{|\alpha|}}\left(\frac{R}{r}\right)^{2N}\int_{\Delta^n_r}|f|^2e^{-\phi}\\
            &\leq \frac{C_3 R^{2n}}{r^{4n}}\sum_{|\alpha|>N}\xi_{\alpha}\left(\frac{R^2}{r^3}\right)^{|\alpha|}\int_{\Delta^n_r}|f|^2e^{-\phi}.
        \end{split}
    \end{equation*}
    Since $\xi\in\ell_1$, we have
    \[\sum_{\alpha\in\mathbb{N}^n}\xi_{\alpha}\left(\frac{R^2}{r^3}\right)^{|\alpha|}<+\infty,\]
    which implies that there exists some $N_1\in\mathbb{N}_+$, such that
    \[\sum_{|\alpha|>N}|\xi_{\alpha}c_{\alpha,N}|<\frac{1}{3}|(\xi\cdot f)(o)|\]
    for any $N>N_1$, where $|(\xi\cdot f)(o)|>0$ by the assumption. Additionally, there exists some $N_2\in\mathbb{N}_+$, such that for any $N>N_2$,
    \[\sum_{|\alpha|>N}|\xi_{\alpha}a_{\alpha}|<\frac{1}{3}|(\xi\cdot f)(o)|.\]
    Then for any $N>\max\{N_1,N_2\}$, it follows from $a_{\alpha}=c_{\alpha,N}$ for any $|\alpha|\leq N$ that
    \begin{equation*}
        \begin{split}
            |(\xi\cdot F_N)(o)|&=\left|\sum_{\alpha
            \in\mathbb{N}^n}\xi_{\alpha}c_{\alpha,N}\right|\\
            &\geq \left|\sum_{|\alpha|\leq N}\xi_{\alpha}c_{\alpha,N}\right|-\sum_{|\alpha|>N}|\xi_{\alpha}c_{\alpha,N}|\\
            &=\left|\sum_{|\alpha|\leq N}\xi_{\alpha}a_{\alpha}\right|-\sum_{|\alpha|>N}|\xi_{\alpha}c_{\alpha,N}|\\
            &\geq |(\xi\cdot f)(o)|-\sum_{|\alpha|>N}|\xi_{\alpha}c_{\alpha,N}|-\sum_{|\alpha|>N}|\xi_{\alpha}a_{\alpha}|\\
            &>|(\xi\cdot f)(o)|-\frac{1}{3}|(\xi\cdot f)(o)|-\frac{1}{3}|(\xi\cdot f)(o)|\\
            &>0.
        \end{split}
    \end{equation*}
    We have $F_N\in A^2(D,e^{-\phi})$ according to (\ref{proof_jet_equa4}). Thus $F=F_N$ is just what we need.
    \end{proof}
    
    We state some simple properties of the $\xi-$cse.
    \begin{Lemma}\label{ctimes}
    For any $a\in\mathbb{R}$ and $c\in\mathbb{R}_+$,
    
    (i) $\gamma_{\xi}(\varphi+a,\psi)=\gamma_{\xi}(\varphi,\psi)$;
    
    (ii) $\gamma_{\xi}(c\varphi,\psi)=\frac{1}{c}\gamma_{\xi}(\varphi,\psi)$.
    \end{Lemma}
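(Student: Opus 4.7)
The plan is to read off both identities directly from the scaling of the sub-level sets $\{\varphi<-t\}\cap D$ entering the definition of $K^\psi_{\xi,\varphi}(t)$, after which taking the limit $t\to+\infty$ in $\log K^\psi_{\xi,\varphi}(t)/t$ does the rest. Neither statement should require more than rewriting indices; the only thing worth checking is that the replaced weight still fits the standing hypotheses (negative plurisubharmonic with $\varphi(o)=-\infty$) or that this irrelevant by Theorem \ref{Thm-xicse-local}.

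For part (i), I would observe that for any $t$ with $t+a\ge 0$,
\[
\{\varphi+a<-t\}\cap D=\{\varphi<-(t+a)\}\cap D,
\]
so by the very definition of the weighted $\xi$-Bergman kernel on this domain,
\[
K^{\psi}_{\xi,\varphi+a}(t)=K^{\psi}_{\xi,\varphi}(t+a).
\]
Taking logs and dividing by $t$,
\[
\frac{\log K^{\psi}_{\xi,\varphi+a}(t)}{t}=\frac{\log K^{\psi}_{\xi,\varphi}(t+a)}{t+a}\cdot\frac{t+a}{t},
\]
and letting $t\to+\infty$ both factors converge (the first to $\gamma_{\xi}(\varphi,\psi)$, the second to $1$), giving $\gamma_{\xi}(\varphi+a,\psi)=\gamma_{\xi}(\varphi,\psi)$. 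If $a>0$ then $\varphi+a$ is not globally negative on $D$, but Theorem \ref{Thm-xicse-local} (or a direct restriction to the sub-level set $\{\varphi<-a\}\cap D$, which is a bounded pseudoconvex neighborhood of $o$) removes this formal obstruction.

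For part (ii), the analogous rescaling gives $\{c\varphi<-t\}\cap D=\{\varphi<-t/c\}\cap D$ for every $c>0$, hence
\[
K^{\psi}_{\xi,c\varphi}(t)=K^{\psi}_{\xi,\varphi}(t/c),
\]
and therefore
\[
\frac{\log K^{\psi}_{\xi,c\varphi}(t)}{t}=\frac{1}{c}\cdot\frac{\log K^{\psi}_{\xi,\varphi}(t/c)}{t/c}\longrightarrow \frac{1}{c}\gamma_{\xi}(\varphi,\psi)
\]
as $t\to+\infty$. Since Proposition \ref{log-concavity} guarantees the limit in the definition of $\gamma_{\xi}$ exists in $\{-\infty\}\cup[0,+\infty]$, no separate justification of convergence is needed. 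There is no real obstacle here; the whole argument is just a change of variables $t\mapsto t+a$ or $t\mapsto t/c$ inside the limit.
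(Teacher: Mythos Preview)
Your argument is correct and essentially identical to the paper's: both rely on the identities $K^{\psi}_{\xi,\varphi+a}(t)=K^{\psi}_{\xi,\varphi}(t+a)$ and $K^{\psi}_{\xi,c\varphi}(t)=K^{\psi}_{\xi,\varphi}(t/c)$ coming from the obvious rewriting of the sub-level sets, followed by the change of variable in the limit. One small caution: since Lemma~\ref{ctimes} is used in the proof of Theorem~\ref{Thm-xicse-local}, your appeal to that theorem to handle the case $a>0$ would be circular, but the alternative you give (restricting to the pseudoconvex neighborhood $\{\varphi<-a\}\cap D$) avoids this and is the right fix.
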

    
    \begin{proof}
    (i) Note that for any $a\in\mathbb{R}$, $t\geq 0$ and $\xi\in\ell_1$,
    \[K^{\psi}_{\xi,\varphi+a}(t)=K^{\psi}_{\xi,\varphi}(t+a),\]
    then
    \[\gamma_{\xi}(\varphi+a,\psi)=\lim_{t\rightarrow+\infty}\frac{\log K^{\psi}_{\xi,\varphi}(t+a)}{t}=\lim_{t\rightarrow+\infty}\frac{\log K^{\psi}_{\xi,\varphi}(t+a)}{t+a}=\gamma_{\xi}(\varphi,\psi).\]
    
    (ii) Note that for any $c>0$, $t\geq 0$ and $\xi\in\ell_1$,
    \[K^{\psi}_{\xi,c\varphi}(t)=K^{\psi}_{\xi,\varphi}(t/c),\]
    then
    \[\gamma_{\xi}(c\varphi,\psi)=\lim_{t\rightarrow+\infty}\frac{\log K^{\psi}_{\xi,c\varphi}(t)}{t}=\frac{1}{c}\lim_{t\rightarrow+\infty}\frac{\log K^{\psi}_{\xi,\varphi}(t)}{t}=\frac{1}{c}\gamma_{\xi}(\varphi,\psi).\]
    \end{proof}

    Denote that $\ell_{-\infty}(\varphi,\psi):=\{\xi\in\ell_1 : \gamma_{\xi}(\varphi,\psi)=-\infty\}$. Using  Proposition \ref{log-concavity}, Proposition \ref{log-increasing}, and Lemma \ref{Lem-jet_ext}, we obtain the following lemma.
    
    \begin{Lemma}\label{gamma>1}
    We have
    
    (i) $\ell_{-\infty}(\varphi,\psi)=\tilde{\ell}_{\mathcal{I}(\psi)_o}$;
    
    (ii) if $\xi\in\ell_{\mathcal{I}(\varphi+\psi)_o}\setminus\ell_{-\infty}(\varphi,\psi)$, then $\gamma_{\xi}(\varphi,\psi)\in [0,1]$;
    
    (iii) if $\xi\in \ell_1\setminus\tilde{\ell}_{\mathcal{I}(\varphi+\psi)_o}$, then $\gamma_{\xi}(\varphi,\psi)\in [1,+\infty]$.
    \end{Lemma}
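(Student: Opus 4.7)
The plan is to handle the three parts separately, with (i) giving us the dichotomy that pins down when $K^\psi_{\xi,\varphi}(t)$ is identically zero, and (ii)--(iii) giving sharp bounds on the exponential growth rate using the two existing propositions together with Lemma \ref{Lem-jet_ext} as an extraction tool.

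For part (i), I would argue both inclusions directly from the definition of $K^\psi_{\xi,\varphi}(t)$. For the $\supseteq$ direction: if $\xi \in \tilde{\ell}_{\mathcal{I}(\psi)_o}$, then any $f \in A^2(\{\varphi<-t\}\cap D, e^{-\psi})$ is, in particular, holomorphic near $o$ (since $\varphi(o)=-\infty$ forces $o \in \{\varphi<-t\}$) and satisfies $\int_U |f|^2 e^{-\psi} < \infty$ for a small neighborhood $U$ of $o$; hence $(f,o) \in \mathcal{I}(\psi)_o$, so $(\xi\cdot f)(o)=0$ and $K^\psi_{\xi,\varphi}(t)\equiv 0$, giving $\gamma_\xi = -\infty$. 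For the $\subseteq$ direction: if $\xi \in \ell_1 \setminus \tilde{\ell}_{\mathcal{I}(\psi)_o}$, apply Lemma \ref{Lem-jet_ext} with $\phi=\psi$ to produce $F \in A^2(D,e^{-\psi})$ with $(\xi\cdot F)(o)\neq 0$; restricting $F$ to each sub-level set yields
\[
K^\psi_{\xi,\varphi}(t) \geq \frac{|(\xi\cdot F)(o)|^2}{\int_D |F|^2 e^{-\psi}} =: c > 0,
\]
so $\log K^\psi_{\xi,\varphi}(t)/t \geq (\log c)/t \to 0$, forcing $\gamma_\xi \geq 0 > -\infty$.

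For part (ii), first observe that the hypothesis set is non-empty only when $\mathcal{I}(\varphi+\psi)_o \neq \mathcal{I}(\psi)_o$, because otherwise $\ell_{\mathcal{I}(\varphi+\psi)_o}=\ell_{\mathcal{I}(\psi)_o}\subset\tilde\ell_{\mathcal{I}(\psi)_o}=\ell_{-\infty}(\varphi,\psi)$ by (i). Under this genuine non-triviality, Proposition \ref{log-increasing} applies and gives that $-\log K^\psi_{\xi,\varphi}(t)+t$ is increasing in $t \in [0,+\infty)$; specialising at $t=0$ yields $\log K^\psi_{\xi,\varphi}(t) \leq \log K^\psi_{\xi,\varphi}(0) + t$, and dividing by $t$ and passing to the limit gives $\gamma_\xi(\varphi,\psi) \leq 1$. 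The lower bound $\gamma_\xi \geq 0$ follows from the same uniform positive bound used in (i), since $\xi \notin \ell_{-\infty}(\varphi,\psi)=\tilde\ell_{\mathcal{I}(\psi)_o}$ lets us invoke Lemma \ref{Lem-jet_ext} exactly as above.

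For part (iii), the key idea is to exploit the extra $e^\varphi$ weight one gains when restricting to $\{\varphi<-t\}$. Since $\xi \in \ell_1 \setminus \tilde\ell_{\mathcal{I}(\varphi+\psi)_o}$, applying Lemma \ref{Lem-jet_ext} with $\phi=\varphi+\psi$ produces $F \in A^2(D, e^{-\varphi-\psi})$ with $(\xi\cdot F)(o)\neq 0$. On the sub-level set $\{\varphi<-t\}\cap D$ we have $e^{-\psi} = e^{\varphi} e^{-\varphi-\psi} \leq e^{-t} e^{-\varphi-\psi}$, so
\[
\int_{\{\varphi<-t\}\cap D} |F|^2 e^{-\psi} \leq e^{-t} \int_D |F|^2 e^{-\varphi-\psi},
\]
which feeds into the definition of $K^\psi_{\xi,\varphi}(t)$ to give $K^\psi_{\xi,\varphi}(t) \geq c\, e^t$ for some $c>0$, hence $\gamma_\xi \geq 1$; the upper bound $+\infty$ is vacuous. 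The main conceptual point—and the only place one needs to think carefully—is that in (ii) one needs a \emph{membership} of $\xi$ in the dual of $\mathcal{I}(\varphi+\psi)_o$ to get a growth rate $\leq 1$ via Proposition \ref{log-increasing}, whereas in (iii) one needs the opposite, a \emph{failure} of this membership, to produce (via Lemma \ref{Lem-jet_ext}) a test function whose mass concentrates where $\varphi$ is very negative and thereby yields growth rate $\geq 1$; the two assumptions match up exactly so that the three cases (i)--(iii) partition $\ell_1$.
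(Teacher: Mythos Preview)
Your proof is correct and follows essentially the same approach as the paper's: part~(i) uses Lemma~\ref{Lem-jet_ext} with $\phi=\psi$ for the nontrivial inclusion, part~(ii) uses Proposition~\ref{log-increasing} for the upper bound $\gamma_\xi\le 1$, and part~(iii) uses Lemma~\ref{Lem-jet_ext} with $\phi=\varphi+\psi$ together with the elementary weight comparison $e^{-\psi}\le e^{-t}e^{-\varphi-\psi}$ on $\{\varphi<-t\}$. Your argument is in fact slightly more careful than the paper's in one place: you explicitly check that the hypothesis $\mathcal{I}(\varphi+\psi)_o\neq\mathcal{I}(\psi)_o$ of Proposition~\ref{log-increasing} is automatically satisfied whenever $\ell_{\mathcal{I}(\varphi+\psi)_o}\setminus\ell_{-\infty}(\varphi,\psi)$ is non-empty, which the paper leaves implicit.
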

    
    \begin{proof}
    Firstly we prove statement (i). Since for any $\xi\in\tilde{\ell}_{\mathcal{I}(\psi)_o}$, and $F\in A^2(\{\varphi<-t\}\cap D,e^{-\psi})$, we have $(\xi\cdot F)(o)=0$, it is clear that $\ell_{-\infty}(\varphi,\psi)\supset\tilde{\ell}_{\mathcal{I}(\psi)_o}$.
    
    For the other side, if $\ell_{-\infty}(\varphi,\psi)\subset\tilde{\ell}_{\mathcal{I}(\psi)_o}$ is not true, then there exists $\xi\in \ell_{-\infty}(\varphi,\psi)$ such that $\xi\notin \tilde{\ell}_{\mathcal{I}(\psi)_o}$. By Lemma \ref{Lem-jet_ext}, there exists $F\in A^2(D,e^{-\psi})$ such that $(\xi\cdot F)(o)\neq 0$. Now we have
    \[K^{\psi}_{\xi,\varphi}(t)=K^{\psi}_{\xi,\{\varphi<-t\}\cap D}(o)\geq \frac{|(\xi\cdot F)(o)|^2}{\int_{\{\varphi<-t\}\cap D}|F|^2e^{-\psi}}>0\]
    for any $t\geq 0$, which implies
    \[\log K^{\psi}_{\xi,\varphi}(t)>-\infty, \ \forall t\geq 0.\]
    Then $\gamma_{\xi}(\varphi,\psi)>-\infty$, which is a contradiction to $\xi\in \ell_{-\infty}(\varphi,\psi)$. Thus $\ell_{-\infty}(\varphi,\psi)=\tilde{\ell}_{\mathcal{I}(\psi)_o}$.
    
    Secondly, we prove statement (ii). In fact, according to Proposition \ref{log-concavity} and Proposition \ref{log-increasing}, we know that $\log K^{\psi}_{\xi,\varphi}(t)$ is convex and $-\log K^{\psi}_{\xi,\varphi}(t)+t$ is increasing for any $\xi\in \ell_{\mathcal{I}(\varphi+\psi)_o}$. It means that $\gamma_{\xi}(\varphi,\psi)\leq 1$. Then $\gamma_{\xi}(\varphi,\psi)\in [0,1]$ for $\xi\in\ell_{\mathcal{I}(\varphi+\psi)_o}\setminus\ell_{-\infty}(\varphi,\psi)$.
    
    Finally, we prove statement (iii). Statement (i) shows $\gamma_{\xi}(\varphi,\psi)>-\infty$. Since $\xi\in \ell_1\setminus\tilde{\ell}_{\mathcal{I}(\varphi+\psi)_o}$, according to Lemma \ref{Lem-jet_ext}, we can find some $F\in A^2(D,e^{-\varphi-\psi})$ such that $(\xi\cdot F)\neq 0$. Then for any $t\geq 0$, we have
    \[C\geq\int_{\{\varphi<-t\}\cap D}|F|^2e^{-\varphi-\psi}\geq e^t\int_{\{\varphi<-t\}\cap D}|F|^2e^{-\psi}\]
    for some positive constant $C$, which implies
    \[K^{\psi}_{\xi,\varphi}(t)=K^{\psi}_{\xi,\{\varphi<-t\}\cap D}(o)\geq \frac{|(\xi\cdot F)(o)|^2}{\int_{\{\varphi<-t\}\cap D}|F|^2e^{-\psi}}\geq C^{-1}|(\xi\cdot F)(o)|^2e^t.\]
    Thus we get
    \[\frac{\log K^{\psi}_{\xi,\varphi}(t)}{t}\geq 1+\frac{\log(C^{-1}|(\xi\cdot F)(o)|^2)}{t}\rightarrow 1, \ t\rightarrow +\infty.\]
    It follows that $\gamma_{\xi}(\varphi,\psi)\in [1,+\infty]$.
    \end{proof}
    
    We also need to recall a lemma used in \cite{BG1, BG2}.
    \begin{Lemma}[see \cite{BG1}]\label{seperate}
    Let $I$ be an ideal of $\mathcal{O}_o$ such that $I\neq\mathcal{O}_o$. Let $(F,o)\in\mathcal{O}_o$ such that $(F,o)\notin I$. Then there exists $\xi\in \ell_{I}$ such that $(\xi\cdot F)(o)\neq 0$.
    \end{Lemma}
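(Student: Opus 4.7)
The plan is to reduce the statement to finite-dimensional linear algebra by truncating modulo a sufficiently high power of the maximal ideal $\mathfrak{m}$ of $\mathcal{O}_o$. The key point is that $\mathcal{O}_o$ is a Noetherian local ring, so Krull's intersection theorem will allow me to find a finite-order Taylor approximation in which $F$ remains visibly outside $I$; once this is done the desired functional $\xi$ can be produced very concretely, with only finitely many nonzero entries.

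First, I would apply Krull's intersection theorem to the finitely generated $\mathcal{O}_o$-module $\mathcal{O}_o / I$: since $\bigcap_N \mathfrak{m}^N (\mathcal{O}_o / I) = 0$, one gets $\bigcap_N (I + \mathfrak{m}^N) = I$. Because $(F, o) \notin I$, there exists some $N \in \mathbb{N}_+$ with $(F, o) \notin I + \mathfrak{m}^N$. The quotient $V := \mathcal{O}_o / (I + \mathfrak{m}^N)$ is a finite-dimensional complex vector space (as a quotient of the Artinian ring $\mathcal{O}_o / \mathfrak{m}^N$), and it contains a nonzero class $[F]$. Elementary linear algebra produces a $\mathbb{C}$-linear functional $\lambda : V \to \mathbb{C}$ with $\lambda([F]) \neq 0$; pulling back via the quotient projection gives $\tilde\lambda : \mathcal{O}_o \to \mathbb{C}$ that vanishes on both $I$ and $\mathfrak{m}^N$ while satisfying $\tilde\lambda(F) \neq 0$.

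Second, I would convert $\tilde\lambda$ into an element $\xi \in \ell_1$. Because $\tilde\lambda$ annihilates $\mathfrak{m}^N$, it depends only on the Taylor coefficients at $o$ of order strictly less than $N$. Writing a germ as $G = \sum_\alpha b_\alpha (z - o)^\alpha$, there exist unique $\xi_\alpha \in \mathbb{C}$ with $|\alpha| < N$ such that $\tilde\lambda(G) = \sum_{|\alpha| < N} \xi_\alpha b_\alpha$. Setting $\xi_\alpha := 0$ for $|\alpha| \geq N$, the resulting sequence $\xi$ has finite support, so $\sum_\alpha |\xi_\alpha|\rho^{|\alpha|} < +\infty$ for every $\rho > 0$, and hence $\xi \in \ell_1$; moreover $(\xi \cdot G)(o) = \tilde\lambda(G)$ for all $G \in \mathcal{O}_o$. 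Consequently $\xi \in \ell_I$ and $(\xi \cdot F)(o) \neq 0$, which is exactly the conclusion.

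There is no real obstacle in this argument once Krull's intersection theorem is invoked; the heart of the matter is recognizing that the statement is, at bottom, a finite-dimensional separation result. A notable byproduct is that the $\xi$ produced this way can always be chosen with finite support, so one avoids any appeal to a Hahn--Banach theorem on the infinite-dimensional pair $(\mathcal{O}_o, \ell_1)$ or to the closedness of $I$ in the analytic Krull topology.
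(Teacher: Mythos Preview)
Your argument is correct. The paper itself does not prove this lemma; it is only quoted from \cite{BG1}, so there is no in-paper proof to compare against. That said, your route via Krull's intersection theorem is the natural one and is almost certainly what is intended: since $\mathcal{O}_o$ is Noetherian local, every ideal $I$ satisfies $\bigcap_N (I+\mathfrak{m}^N)=I$, so $F\notin I$ forces $F\notin I+\mathfrak{m}^N$ for some $N$, and the problem becomes finite-dimensional. The only point worth making explicit is the (standard) identification of $\mathfrak{m}^N$ with the germs whose Taylor expansion at $o$ has no terms of order $<N$, which you use implicitly when passing from ``$\tilde\lambda$ vanishes on $\mathfrak{m}^N$'' to ``$\tilde\lambda$ depends only on the coefficients $b_\alpha$ with $|\alpha|<N$.'' Your observation that $\xi$ can be taken with finite support (hence in the subspace $\ell_0\subset\ell_1$ used elsewhere in the paper) is a genuine bonus over any abstract Hahn--Banach argument in the analytic Krull topology.
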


    \section{Proofs of Theorem \ref{Thm-xicse-local} and Theorem \ref{thm-gln-jn}}\label{Proofmainthm}
    
    We first give the proof of Theorem \ref{Thm-xicse-local}.
    \begin{proof}[Proof of Theorem \ref{Thm-xicse-local}]
    The first part of Theorem \ref{Thm-xicse-local} comes from Lemma \ref{gamma>1}(i). In the following, we assume $\xi\in\ell_1\setminus\tilde{\ell}_{\mathcal{I}(\psi)_o}$, which implies $\gamma_{\xi}(\varphi,\psi)>-\infty$.
    
    Let $c\in [0,+\infty)$. Suppose $\xi\in\ell_{\mathcal{I}(c\varphi+\psi)_o}$. According to Lemma \ref{gamma>1}(ii), we have $\gamma_{\xi}(c\varphi,\psi)\in [0,1]$. Then Lemma \ref{ctimes}(ii) implies $\gamma_{\xi}(\varphi,\psi)\leq c$. We get that
    \begin{equation}\label{gamma>inf}
        \gamma_{\xi}(\varphi,\psi)\leq \inf\{c\geq 0 : \xi\in \ell_{\mathcal{I}(c\varphi+\psi)_o}\}.
    \end{equation}
    
    Suppose $c'\in [0,+\infty)$ satisfies $\xi\in\ell_1\setminus\tilde{\ell}_{\mathcal{I}(c'\varphi+\psi)_o}$. Then Lemma \ref{gamma>1}(iii) implies $\gamma_{\xi}(c'\varphi,\psi)\geq 1$. Similarly, it follows that $\gamma_{\xi}(\varphi,\psi)\geq c'$ by Lemma \ref{ctimes}(ii). Combining with (\ref{gamma>inf}), we finally get
    \begin{equation*}
        \gamma_{\xi}(\varphi,\psi)= \inf\{c\geq 0 : \xi\in \ell_{\mathcal{I}(c\varphi+\psi)_o}\}.
    \end{equation*}
    \end{proof}
    
    Next we give the proof of Theorem \ref{thm-gln-jn}.
    \begin{proof}[Proof of Theorem \ref{thm-gln-jn}]
    According to $c_o^F(\varphi,\psi)>0$, we know $\xi\in\ell_1\setminus\tilde{\ell}_{\mathcal{I}(\psi)_o}$ for any $\xi\in\ell_1$ satisfying $(\xi\cdot F)(o)\neq 0$, and it follows from Lemma \ref{gamma>1}(i) that $\gamma_{\xi}(\varphi,\psi)\in [0,+\infty]$ for any $\xi\in\ell_1\setminus\tilde{\ell}_{\mathcal{I}(\psi)_o}$.
    
    For any $c<c_o^F(\varphi,\psi)$, we know $(F,o)\in \mathcal{I}(c\varphi+\psi)_o$. Then for any $\xi\in\ell_1\setminus\tilde{\ell}_{\mathcal{I}(\psi)_o}$ with $(\xi\cdot F)(o)\neq 0$ (of course there exists such $\xi$ according to Lemma \ref{seperate}), we have $\xi\in\ell_1\setminus\tilde{\ell}_{\mathcal{I}(c\varphi+\psi)_o}$, thus Theorem \ref{Thm-xicse-local} implies $\gamma_{\xi}(\varphi,\psi)\geq c$. It follows that
    \[c_o^F(\varphi,\psi)\leq\inf_{\xi\in\ell_1, \ (\xi\cdot F)(o)\neq 0}\gamma_{\xi}(\varphi,\psi).\]
    
    For any $c>c_o^F(\varphi,\psi)$, we show that there exists some $\xi\in\ell_1$ with $(\xi\cdot F)(o)\neq 0$ such that $\gamma_{\xi}(\varphi,\psi)\leq c$. Note that $c>c_o^F(\varphi,\psi)$ implies that $(F,o)\notin\mathcal{I}(c\varphi+\psi)_o\subsetneq \mathcal{O}_o$. Then Lemma \ref{seperate} gives us some functional $\xi\in \ell_{\mathcal{I}(c\varphi+\psi)_o}$ with $(\xi\cdot F)(o)\neq 0$. Then according to Theorem \ref{Thm-xicse-local}, we can get $\gamma_{\xi}(\varphi,\psi)\leq c$.
    
    Now the proof of
    \[c_o^F(\varphi,\psi)=\inf_{\xi\in\ell_1, \ (\xi\cdot F)(o)\neq 0}\gamma_{\xi}(\varphi,\psi)\]
    is done.
    \end{proof}

    Denote
    \[c_o(\varphi,\psi):=\sup\{c\geq 0 : e^{-c\varphi-\psi} \ \text{is\ locally\ } L^1 \ \text{near\ } o\},\]
    and the complex singularity exponent (\emph{cse} for short, see \cite{Tian})
    \[c_o(\varphi):=\sup\{c\geq 0 : e^{-c\varphi} \ \text{is\ locally\ } L^1 \ \text{near\ } o\}.\]
   
    We have the following property related to the $\xi-$cse and $c_o(\varphi,\psi)$ as a corollary of Theorem \ref{thm-gln-jn}.
    
    \begin{Corollary}\label{thm-gln-lct}
    Assume $c_o(\varphi,\psi)>0$, then
    \[c_o(\varphi,\psi)=\inf_{\xi\in\ell_1\setminus\{0\}}\gamma_{\xi}(\varphi,\psi).\]
    \end{Corollary}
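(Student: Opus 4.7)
The plan is to derive Corollary \ref{thm-gln-lct} as a direct specialization of Theorem \ref{thm-gln-jn} to the constant germ $F\equiv 1$. For this choice, $|F|^2\equiv 1$ gives $c_o^F(\varphi,\psi)=c_o(\varphi,\psi)$, and the pairing reduces to $(\xi\cdot F)(o)=\xi_0$ for $\xi=(\xi_\alpha)\in\ell_1$. Applying Theorem \ref{thm-gln-jn} then yields
\[c_o(\varphi,\psi)=\inf_{\xi\in\ell_1,\,\xi_0\neq 0}\gamma_\xi(\varphi,\psi),\]
which already delivers one inequality, namely $\inf_{\xi\in\ell_1\setminus\{0\}}\gamma_\xi(\varphi,\psi)\leq c_o(\varphi,\psi)$, since we are taking the infimum over a larger index set.

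Before tackling the reverse inequality I will record the structural observation that the hypothesis $c_o(\varphi,\psi)>0$ forces $\mathcal{I}(\psi)_o=\mathcal{O}_o$. Indeed, picking any $c>0$ with $e^{-c\varphi-\psi}$ locally $L^1$ near $o$, the negativity of $\varphi$ yields $e^{-\psi}\leq e^{-c\varphi-\psi}$ locally, hence $(1,o)\in\mathcal{I}(\psi)_o$. Therefore $\tilde{\ell}_{\mathcal{I}(\psi)_o}=\{0\}$, and by Lemma \ref{gamma>1}(i) every $\xi\in\ell_1\setminus\{0\}$ automatically satisfies $\gamma_\xi(\varphi,\psi)>-\infty$. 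This is precisely what allows enlargement of the index set from $\{\xi_0\neq 0\}$ to $\ell_1\setminus\{0\}$ without the infimum collapsing to $-\infty$.

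For the reverse inequality I will invoke Theorem \ref{Thm-xicse-local}: for any $\xi\in\ell_1\setminus\{0\}=\ell_1\setminus\tilde{\ell}_{\mathcal{I}(\psi)_o}$,
\[\gamma_\xi(\varphi,\psi)=\inf\{c\geq 0:\xi\in\ell_{\mathcal{I}(c\varphi+\psi)_o}\}.\]
For any $c<c_o(\varphi,\psi)$, the same negativity argument shows $(1,o)\in\mathcal{I}(c\varphi+\psi)_o$, forcing $\mathcal{I}(c\varphi+\psi)_o=\mathcal{O}_o$ and leaving $\ell_{\mathcal{I}(c\varphi+\psi)_o}$ empty (no nonzero functional can vanish on the whole ring). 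Hence no such $c$ lies in the defining set, giving $\gamma_\xi(\varphi,\psi)\geq c_o(\varphi,\psi)$; taking the infimum over $\xi\in\ell_1\setminus\{0\}$ closes the argument. There is no serious obstacle here; the only delicate point, and the step I would verify carefully, is the observation that positivity of $c_o(\varphi,\psi)$ trivializes $\mathcal{I}(\psi)_o$, since it is what guarantees that the enlarged infimum remains meaningful.
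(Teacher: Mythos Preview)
Your argument is correct. The paper, however, takes a more compact route: it writes
\[c_o(\varphi,\psi)=\inf_{(F,o)\in\mathcal{O}_o\setminus\{0\}}c_o^F(\varphi,\psi),\]
applies Theorem \ref{thm-gln-jn} to each $F$ (legitimate since $c_o^F(\varphi,\psi)\geq c_o(\varphi,\psi)>0$), and then collapses the resulting double infimum
\[\inf_{(F,o)\neq 0}\ \inf_{\xi:\,(\xi\cdot F)(o)\neq 0}\gamma_\xi(\varphi,\psi)\]
directly to $\inf_{\xi\in\ell_1\setminus\{0\}}\gamma_\xi(\varphi,\psi)$, using only the trivial fact that every nonzero $\xi$ pairs nontrivially with some germ. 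So the paper needs nothing beyond Theorem \ref{thm-gln-jn}. Your proof instead splits into two inequalities and for the lower bound appeals to Theorem \ref{Thm-xicse-local} together with the observation that $c_o(\varphi,\psi)>0$ forces $\mathcal{I}(c\varphi+\psi)_o=\mathcal{O}_o$ for all $c<c_o(\varphi,\psi)$. This is a bit longer but has the merit of making transparent exactly where the positivity hypothesis enters (preventing the enlarged infimum from dropping to $-\infty$), a point the paper's proof leaves implicit.
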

   
    \begin{proof}
    Note that
    \[c_o(\varphi,\psi)=c_o^{1}(\varphi,\psi)=\inf_{(F,o)\in\mathcal{O}_o\setminus\{0\}}c_o^F(\varphi,\psi),\]
    and for any $\xi\in\ell_1\setminus\{0\}$, there must exist some $(F,o)\in\mathcal{O}_o$ satisfying $(\xi\cdot F)(o)\neq 0$. Then it follows from Theorem \ref{thm-gln-jn} that
    \begin{equation*}
        \begin{split}
            c_o(\varphi,\psi)&=\inf_{(F,o)\in\mathcal{O}_o\setminus\{0\}}c_o^F(\varphi,\psi)\\
            &=\inf_{(F,o)\in\mathcal{O}_o\setminus\{0\}}\left(\inf_{\xi\in\ell_1, \ (\xi\cdot F)(o)\neq 0}\gamma_{\xi}(\varphi,\psi)\right)\\
            &=\inf_{\xi\in\ell_1\setminus\{0\}}\gamma_{\xi}(\varphi,\psi).
        \end{split}
    \end{equation*}

    \end{proof}

   When $\psi\equiv 0$, Corollary \ref{thm-gln-lct} degenerates to Corollary \ref{cor-gln-lct}, the complex singularity exponents (or log canonical thresholds) case.
   
   Moreover, let $(F,o)$ be a holomorphic germ near $o$, and denote the jumping number
    \[c_o^F(\varphi):=\sup\{c\geq 0 : |F|^2e^{-c\varphi} \ \text{is\ locally\ } L^1 \ \text{near\ } o\}.\]
    Theorem \ref{thm-gln-jn} degenerates to the jumping number case when $\psi\equiv 0$.
   
   \begin{Corollary}\label{cor-gln-jn}
   Assume $c_o^F(\varphi)>0$, then
   \[c_o^F(\varphi)=\inf_{\xi\in \ell_1, \ (\xi\cdot F)(o)\neq 0}\gamma_{\xi}(\varphi).\]
   \end{Corollary}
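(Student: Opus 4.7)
The plan is to read off Corollary \ref{cor-gln-jn} directly from Theorem \ref{thm-gln-jn} by specializing to $\psi \equiv 0$. By definition we have $c_o^F(\varphi) = c_o^F(\varphi, 0)$, and the notational convention stated immediately after the definition of the $\xi$-complex singularity exponent gives $\gamma_{\xi}(\varphi) = \gamma_{o,\xi}(\varphi, 0)$. So the hypothesis $c_o^F(\varphi) > 0$ is precisely the hypothesis of Theorem \ref{thm-gln-jn} with $\psi = 0$, and the identity claimed in the corollary is exactly the conclusion of that theorem after the same substitution. No further computation is required beyond verifying that the index set $\{\xi \in \ell_1 : (\xi \cdot F)(o) \neq 0\}$ over which the infimum is taken does not depend on $\psi$.

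Because the statement is a pure specialization of an already-proved theorem, there is no substantive obstacle. For orientation, both halves of the argument for Theorem \ref{thm-gln-jn} pass through unchanged when $\psi$ is set to zero: the inequality $c_o^F(\varphi) \leq \inf_{\xi} \gamma_{\xi}(\varphi)$ uses that $(F, o) \in \mathcal{I}(c\varphi)_o$ for every $c < c_o^F(\varphi)$ together with Theorem \ref{Thm-xicse-local}; the reverse inequality uses Lemma \ref{seperate} applied to the proper ideal $\mathcal{I}(c\varphi)_o$ for $c > c_o^F(\varphi)$ to furnish a functional $\xi \in \ell_{\mathcal{I}(c\varphi)_o}$ with $(\xi \cdot F)(o) \neq 0$, for which Theorem \ref{Thm-xicse-local} then yields $\gamma_{\xi}(\varphi) \leq c$. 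Taking $c \downarrow c_o^F(\varphi)$ in each case closes the chain of inequalities and gives the claimed equality.
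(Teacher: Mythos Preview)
Your proposal is correct and matches the paper's own treatment: the paper simply remarks that Theorem \ref{thm-gln-jn} ``degenerates to the jumping number case when $\psi\equiv 0$,'' with no further argument given. Your additional paragraph recapitulating the two halves of the proof of Theorem \ref{thm-gln-jn} is accurate but unnecessary for the corollary itself.
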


    \section{Closedness properties for $\xi-$cse}\label{ClosedSection}
    We answer the Question \ref{Q:inf=min} affirmatively in this section, i.e., the proofs of Theorem \ref{thm-closed-jn} and Corollary \ref{cor-closed-lct}. For some functional $\xi$ achieving $c_o^F(\varphi,\psi)$ (or $c_o(\varphi,\psi)$, $c_o^F(\varphi)$, and $c_o(\varphi)$), we would like to call that $\xi$ computes $c_o^F(\varphi,\psi)$ (or $c_o(\varphi,\psi)$, $c_o^F(\varphi)$, and $c_o(\varphi)$ respectively). 

    We prove Theorem \ref{thm-closed-jn}.

    \begin{proof}
        Denote that
        \[\mathcal{I}_+(c\varphi+\psi)_o:=\bigcup_{c'>c}\mathcal{I}(c'\varphi+\psi)_o\]
        for any $c\in [0,+\infty)$. Then by the definition of $c_o^F(\varphi,\psi)$, it is clear that
        \[(F,o)\notin\mathcal{I}_+(c_o^F(\varphi,\psi)\varphi+\psi)_o.\]
        Now we can choose some $\xi\in\ell_{\mathcal{I}_+(c_o^F(\varphi,\psi)\varphi+\psi)_o}\setminus\tilde{\ell}_{\mathcal{I}(\psi)_o}$ such that $(\xi\cdot F)(o)\neq 0$ according to Lemma \ref{seperate}. Note that $\xi\in\ell_{\mathcal{I}(c\varphi+\psi)_o}\setminus\tilde{\ell}_{\mathcal{I}(\psi)_o}$ for any $c>c_o^F(\varphi,\psi)$. It follows from Lemma \ref{gamma>1}(ii) and Lemma \ref{ctimes}(ii) that $\gamma_{\xi}(\varphi,\psi)\leq c$ for any $c>c_o^F(\varphi,\psi)$, which implies $\gamma_{\xi}(\varphi,\psi)\leq c_o^F(\varphi,\psi)$. In addition, Theorem \ref{thm-gln-jn} shows that $\gamma_{\xi}(\varphi,\psi)\geq c_o^F(\varphi,\psi)$. Then we have $\gamma_{\xi}(\varphi,\psi)=c_o^F(\varphi,\psi)$, and such $\xi$ is just the functional that computes $c_o^F(\varphi,\psi)$.
        
        \end{proof}

    Related to Corollary \ref{thm-gln-lct}, we can obtain the following corollary by letting $F\equiv 1$ in Theorem \ref{thm-closed-jn}.
    
    \begin{Corollary}\label{thm-closed-lct}
        Assume $c_o(\varphi,\psi)>0$, then there exists $\xi\in\ell_1$ such that
         \[c_o(\varphi,\psi)=\gamma_{\xi}(\varphi,\psi).\]
        \end{Corollary}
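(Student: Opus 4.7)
The plan is to reduce this statement to Theorem \ref{thm-closed-jn} by taking $F\equiv 1$, which is the route hinted in the paragraph immediately above the corollary. First I would verify that the hypothesis of Theorem \ref{thm-closed-jn} is met for $F\equiv 1$: since $|F|^2e^{-c\varphi-\psi}=e^{-c\varphi-\psi}$, we have $c_o^F(\varphi,\psi)=c_o(\varphi,\psi)$, and the latter is positive by assumption.

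Next I would apply Theorem \ref{thm-closed-jn} to obtain some $\xi\in\ell_1$ with $(\xi\cdot F)(o)\neq 0$ and $\gamma_{\xi}(\varphi,\psi)=c_o^F(\varphi,\psi)$. For $F\equiv 1$, one has $F^{(\alpha)}(o)/\alpha!=\delta_{\alpha,0}$, so the condition $(\xi\cdot F)(o)\neq 0$ just means $\xi_0\neq 0$; in particular $\xi\neq 0\in\ell_1$, which is all that is required. Substituting $c_o^F(\varphi,\psi)=c_o(\varphi,\psi)$ then yields $\gamma_{\xi}(\varphi,\psi)=c_o(\varphi,\psi)$, as desired.

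There is no real obstacle here: the corollary is a direct specialization of Theorem \ref{thm-closed-jn}, with all the work (construction of $\xi$ via Lemma \ref{seperate} applied to $\ell_{\mathcal{I}_+(c_o^F(\varphi,\psi)\varphi+\psi)_o}\setminus\tilde{\ell}_{\mathcal{I}(\psi)_o}$, plus the sandwich between Lemma \ref{gamma>1}(ii)--Lemma \ref{ctimes}(ii) and Theorem \ref{thm-gln-jn}) already carried out in the proof of Theorem \ref{thm-closed-jn}. The only thing worth flagging is the trivial identity $c_o^1(\varphi,\psi)=c_o(\varphi,\psi)$, which is already recorded in the proof of Corollary \ref{thm-gln-lct} and needs no separate argument.
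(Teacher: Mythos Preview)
Your proposal is correct and matches the paper's own argument exactly: the paper derives Corollary \ref{thm-closed-lct} simply by letting $F\equiv 1$ in Theorem \ref{thm-closed-jn}, using the identity $c_o^{1}(\varphi,\psi)=c_o(\varphi,\psi)$. There is nothing to add.
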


    When $\psi\equiv 0$, Corollary \ref{thm-closed-lct} gives Corollary \ref{cor-closed-lct}.

    \begin{Example}\label{ex-xi0}
     It is easy to see that $\xi_0:=(1,0,\ldots,0,\ldots)$ satisfies $\xi_0\in\ell_{\mathcal{I}_+(c_o(\varphi)\varphi)_o}$, which implies that $\xi_0$ is a functional that computes $c_o(\varphi)$ when $c_o(\varphi)>0$ according to the proof of Theorem \ref{thm-closed-jn}.
    \end{Example}
    
    It follows from Corollary \ref{cor-closed-lct} and Example \ref{ex-xi0} that the following corollary holds (similar results can be referred to \cite{GZrestriction}).
    
    \begin{Corollary}\label{Cor-limlogK(t)/t}
        Let $D$ be a bounded pseudoconvex domain in $\mathbb{C}^n$, Let $z\in D$ and $\varphi$ a negative plurisubharmonic function on $D$ such that $\varphi(z)=-\infty$. Denote $K_{\{\varphi<-t\}}(\cdot)$ be the Bergman kernel on the domain $\{\varphi<-t\}\subset D$ for any $t\geq 0$. If $c_z(\varphi)>0$, then
        \[\lim_{t\to +\infty}\frac{\log K_{\{\varphi<-t\}}(z)}{t}=c_z(\varphi).\] 
    \end{Corollary}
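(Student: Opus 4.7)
The plan is to reduce the statement to a direct application of Example \ref{ex-xi0}. The only substantive observation needed is that the classical Bergman kernel $K_{\{\varphi<-t\}}(z)$ is a special case of the $\xi$-Bergman kernel, corresponding to the evaluation functional $\xi_0 := (1,0,\ldots,0,\ldots) \in \ell_1$.

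First I would verify the kernel identification. For any $F(w) = \sum_\alpha a_\alpha (w-z)^\alpha \in \mathcal{O}_z$, the definition of the action of $\ell_1$ on $\mathcal{O}_z$ gives $(\xi_0 \cdot F)(z) = a_0 = F(z)$. Plugging $\xi = \xi_0$ and $\psi \equiv 0$ into the definition of the $\xi$-Bergman kernel therefore yields
\[
K_{\xi_0, \{\varphi<-t\}}^{0}(z) \;=\; \sup_{f \in A^2(\{\varphi<-t\})} \frac{|f(z)|^2}{\int_{\{\varphi<-t\}} |f|^2} \;=\; K_{\{\varphi<-t\}}(z).
\]
Since all notions attached to the $\xi$-cse are local around the basepoint, after translating $o$ to $z$ the quantity $\gamma_{z,\xi_0}(\varphi) = \lim_{t\to+\infty} \log K_{\xi_0,\varphi}^{0}(t)/t$ is by construction precisely the limit appearing in the statement of the corollary.

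Next I would invoke Example \ref{ex-xi0}: since $\xi_0$ annihilates every germ in $\mathcal{O}_z$ that vanishes at $z$, it lies in $\ell_{\mathcal{I}_{+}(c_z(\varphi)\varphi)_z}$ as soon as $c_z(\varphi)>0$ (so that this ideal is proper in $\mathcal{O}_z$). The construction in the proof of Theorem \ref{thm-closed-jn}, applied with $F \equiv 1$, then shows that $\xi_0$ computes $c_z(\varphi)$, i.e.\ $\gamma_{z,\xi_0}(\varphi) = c_z(\varphi)$. Combining this with the kernel identification above yields the desired equality. I do not anticipate any genuine obstacle here: the heavy lifting has already been carried out in Theorem \ref{thm-closed-jn} and Corollary \ref{cor-closed-lct}, and what remains is only the bookkeeping identification of the classical Bergman kernel with $K_{\xi_0,\cdot}^{0}$ via the Taylor-coefficient formula for the pairing of $\ell_1$ with $\mathcal{O}_z$.
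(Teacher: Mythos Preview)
Your proposal is correct and follows exactly the route the paper takes: the paper derives the corollary directly from Corollary \ref{cor-closed-lct} and Example \ref{ex-xi0}, and your write-up simply spells out the identification $K_{\{\varphi<-t\}}(z)=K^{0}_{\xi_0,\{\varphi<-t\}}(z)$ and the fact that $\xi_0$ computes $c_z(\varphi)$.
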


    For $\psi\equiv 0$, Theorem \ref{thm-closed-jn} also deduces the following corollary related to Corollary \ref{cor-gln-jn}.
    \begin{Corollary}\label{cor-closed-jn}
        Assume $F$ is a local holomorphic germ near $o$ such that $c_o^F(\varphi)>0$. Then there exists $\xi\in\ell_1$ such that $(\xi\cdot F)(o)\neq 0$ and
     \[c_o^F(\varphi)=\gamma_{\xi}(\varphi).\]
    \end{Corollary}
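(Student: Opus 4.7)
The plan is to obtain Corollary~\ref{cor-closed-jn} as the $\psi \equiv 0$ specialization of Theorem~\ref{thm-closed-jn}, which has already been proved in the preceding section. Directly from the definitions, $c_o^F(\varphi, 0) = c_o^F(\varphi)$ and $\gamma_{\xi}(\varphi, 0) = \gamma_{\xi}(\varphi)$, so the hypothesis $c_o^F(\varphi) > 0$ is exactly the hypothesis of Theorem~\ref{thm-closed-jn} with vanishing weight. Invoking that theorem immediately produces a functional $\xi \in \ell_1$ with $(\xi \cdot F)(o) \neq 0$ such that $\gamma_{\xi}(\varphi) = c_o^F(\varphi)$, which is the claim.

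If one prefers to rerun the argument directly in the unweighted setting, the recipe is short. Define $\mathcal{I}_+(c\varphi)_o := \bigcup_{c' > c}\mathcal{I}(c'\varphi)_o$. By the very definition of $c_o^F(\varphi)$, the germ $(F, o)$ does not lie in $\mathcal{I}_+(c_o^F(\varphi)\varphi)_o$, and this ideal is proper in $\mathcal{O}_o$. Lemma~\ref{seperate} then supplies some $\xi \in \ell_{\mathcal{I}_+(c_o^F(\varphi)\varphi)_o}$ with $(\xi \cdot F)(o) \neq 0$. For every $c > c_o^F(\varphi)$ one has $\xi \in \ell_{\mathcal{I}(c\varphi)_o}$, so Lemma~\ref{gamma>1}(ii) applied to $c\varphi$ combined with Lemma~\ref{ctimes}(ii) yields $\gamma_{\xi}(\varphi) \leq c$; letting $c \downarrow c_o^F(\varphi)$ gives $\gamma_{\xi}(\varphi) \leq c_o^F(\varphi)$. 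The reverse inequality $\gamma_{\xi}(\varphi) \geq c_o^F(\varphi)$ comes directly from Corollary~\ref{cor-gln-jn}, since that result asserts every admissible $\xi$ satisfies $\gamma_{\xi}(\varphi) \geq c_o^F(\varphi)$.

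There is essentially no obstacle, as the entire content of the corollary is packaged inside Theorem~\ref{thm-closed-jn}; the only point to be careful about is ensuring that the separating functional produced by Lemma~\ref{seperate} is chosen against the specific germ $(F, o)$ so that $(\xi \cdot F)(o) \neq 0$ is built in from the start, rather than taking an arbitrary nonzero element of $\ell_{\mathcal{I}_+(c_o^F(\varphi)\varphi)_o}$.
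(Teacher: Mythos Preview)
Your proposal is correct and matches the paper's approach exactly: the paper simply states that Corollary~\ref{cor-closed-jn} follows from Theorem~\ref{thm-closed-jn} by taking $\psi\equiv 0$. Your optional second paragraph reproducing the argument in the unweighted setting is likewise a verbatim specialization of the paper's proof of Theorem~\ref{thm-closed-jn}.
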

    
    It should be noted that the proof of Theorem \ref{thm-closed-jn} is independent of the strong openness property. Actually, the closedness property of the $\xi-$cse gives an approach to the strong openness property. Recall the following version of the strong openness property of multiplier ideal sheaves.
    \begin{Theorem}[strong openness property, see \cite{GZSOC}]\label{SOP}
        Let $\varphi$ and $\psi$ be plurisubharmonic functions near $o$, and $F$ holomorphic function near $o$. Assume $c_o^F(\varphi,\psi)\in (0,+\infty)$, then
        \[(F,o)\notin\mathcal{I}(c_o^F(\varphi,\psi)\varphi+\psi)_o.\]
        \end{Theorem}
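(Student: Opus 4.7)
My plan is to prove Theorem \ref{SOP} by contradiction, leveraging the closedness property Theorem \ref{thm-closed-jn} (the ``approach to strong openness'' advertised in the paragraph preceding the statement). Suppose for contradiction that $(F,o)\in\mathcal{I}(c\varphi+\psi)_o$ with $c:=c_o^F(\varphi,\psi)\in(0,+\infty)$.

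Applying Theorem \ref{thm-closed-jn} produces some $\xi\in\ell_1$ with $(\xi\cdot F)(o)\neq 0$ and $\gamma_\xi(\varphi,\psi)=c$; the construction in its proof actually selects $\xi\in\ell_{\mathcal{I}_+(c\varphi+\psi)_o}$. Since we are assuming $(F,o)\in\mathcal{I}(c\varphi+\psi)_o$ while $(\xi\cdot F)(o)\neq 0$, this $\xi$ is forced out of $\ell_{\mathcal{I}(c\varphi+\psi)_o}$, so by Theorem \ref{Thm-xicse-local} the infimum $\inf\{c'\geq 0:\xi\in\ell_{\mathcal{I}(c'\varphi+\psi)_o}\}=c$ fails to be attained. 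To derive a contradiction I would use $F$ itself as a test function in $K_{\xi,\varphi}^{\psi}(t)$ on $\{\varphi<-t\}\cap D$ (valid for $t$ large enough that $\{\varphi<-t\}\cap D$ lies inside the local neighborhood where $|F|^2e^{-c\varphi-\psi}$ is integrable), obtaining
\[K_{\xi,\varphi}^{\psi}(t)\geq\frac{|(\xi\cdot F)(o)|^2}{\int_{\{\varphi<-t\}\cap D}|F|^2e^{-\psi}}\geq\frac{|(\xi\cdot F)(o)|^2\,e^{ct}}{\int_{\{\varphi<-t\}\cap D}|F|^2e^{-c\varphi-\psi}},\]
whose right-most integral tends to $0$ as $t\to+\infty$ by dominated convergence (the sets $\{\varphi<-t\}\cap D$ decrease to a pluripolar set). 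Thus $K_{\xi,\varphi}^{\psi}(t)$ grows strictly faster than $e^{ct}$.

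The hard part will be upgrading this qualitative gain to a uniform quantitative one: the above only shows $K_{\xi,\varphi}^{\psi}(t)/e^{ct}\to +\infty$, which gives $\limsup_{t\to+\infty}(\log K_{\xi,\varphi}^{\psi}(t))/t\geq c$ but not $\geq c+\delta$ for any fixed $\delta>0$, and so it does not directly contradict $\gamma_\xi(\varphi,\psi)=c$. Closing this gap seems to require combining the log-convexity from Proposition \ref{log-concavity} with a rescaled version of the increasing property of $-\log K_{\xi,\varphi}^{\psi}(t)+t$ (Proposition \ref{log-increasing}) applied to $c\varphi$, or else invoking Lemma \ref{L2method} to build a sequence of test functions whose $L^2$ norms decay at a rate $e^{-(c+\delta)t}$. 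This bootstrap from the qualitative $o(e^{-ct})$ decay to a genuine $O(e^{-(c+\delta)t})$ decay is the heart of strong openness and is precisely the Guan--Zhou $L^2$ argument underlying \cite{GZSOC}, which is why the paper chooses to recall rather than reprove Theorem \ref{SOP}.
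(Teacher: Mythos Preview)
Your diagnosis of the gap is off, and your conclusion that the paper merely recalls Theorem \ref{SOP} without proof is wrong.

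You correctly derive, under the contradictory hypothesis $(F,o)\in\mathcal{I}(c\varphi+\psi)_o$, that $K_{\xi,\varphi}^{\psi}(t)/e^{ct}\to+\infty$. You then say this only yields $\gamma_\xi\geq c$ and that pushing to $\gamma_\xi>c$ would require the full Guan--Zhou bootstrap. But you do not need $\gamma_\xi>c$; what you are missing is an \emph{upper} bound on $K$, not a sharper lower bound. Since $\log K_{\xi,\varphi}^{\psi}(t)$ is convex (Proposition \ref{log-concavity}) and its slope at infinity equals $\gamma_\xi(\varphi,\psi)=c$ by Theorem \ref{thm-closed-jn}, the secant slopes $\bigl(\log K_{\xi,\varphi}^{\psi}(t)-\log K_{\xi,\varphi}^{\psi}(0)\bigr)/t$ are increasing with limit $c$, hence all $\leq c$. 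Thus $K_{\xi,\varphi}^{\psi}(t)\leq K_{\xi,\varphi}^{\psi}(0)\,e^{ct}$ for every $t\geq 0$, which already contradicts $K_{\xi,\varphi}^{\psi}(t)/e^{ct}\to+\infty$. No $O(e^{-(c+\delta)t})$ decay is needed, and Proposition \ref{log-increasing} plays no role.

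The paper does supply a complete argument immediately after stating Theorem \ref{SOP}, using precisely this same ingredient (convexity plus $\gamma_\xi=c$, phrased after normalizing $c=1$ as ``$-\log K_{\xi,\varphi}^{\psi}(t)+t$ is increasing''). Rather than a contradiction, it argues directly via the layer-cake formula: for $q>1$,
\[\int_D|F|^2e^{-\varphi/q-\psi}\geq\int_0^{+\infty}\frac{|(\xi\cdot F)(o)|^2}{K_{\xi,\varphi}^{\psi}(qt)}\,e^t\,\mathrm{d}t\geq\frac{|(\xi\cdot F)(o)|^2}{K_{\xi,\varphi}^{\psi}(0)}\int_0^{+\infty}e^{(1-q)t}\,\mathrm{d}t=\frac{|(\xi\cdot F)(o)|^2}{K_{\xi,\varphi}^{\psi}(0)}\cdot\frac{1}{q-1},\]
and letting $q\to 1^+$ with monotone convergence gives $\int_D|F|^2e^{-\varphi-\psi}=+\infty$. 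Your contradiction route (once the convexity upper bound is inserted) and the paper's direct layer-cake route are two packagings of the same mechanism; neither needs to invoke \cite{GZSOC}.
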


    We claim that Theorem \ref{thm-closed-jn} can induce Theorem \ref{SOP} (the strong openness property). Let $\varphi$, $\psi$ and domain $D$ be as in Section \ref{Introduction}. Assume there exists some $F$ which is a holomorphic function on $D$ such that $c_o^F(\varphi,\psi)=1$ (without loss of generality). Assume that there exists some $\xi\in\ell_1\setminus \tilde{\ell}_{\mathcal{I}(\psi)_o}$ such that $(\xi\cdot F)(o)\neq 0$ and
     \[c_o^F(\varphi,\psi)=\gamma_{\xi}(\varphi,\psi).\]
    Then Proposition \ref{log-concavity} shows that $-\log K^{\psi}_{\xi}(t)+t$ is (concave and) increasing with respect to $t\in [0,+\infty)$. Combining with Fubini's Theorem, we have (see e.g. \cite{G16, BG1, BG2})
    \begin{equation*}
        \begin{split}
            \int_D|F|^2e^{-\frac{\varphi}{q}-\psi}&=\int_{-\infty}^{+\infty}\left(\int_{\{\frac{\varphi}{q}<-t\}\cap D}|F|^2e^{-\psi}\right)e^t\mathrm{d}t\\
            &\geq \int_0^{+\infty}\frac{|(\xi\cdot F)(o)|^2}{K^{\psi}_{\xi,\varphi}(qt)}e^t\mathrm{d}t+\int_{-\infty}^0\frac{|(\xi\cdot F)(o)|^2}{K^{\psi}_{\xi,\varphi}(0)}e^t\mathrm{d}t\\
            &\geq \frac{|(\xi\cdot F)(o)|^2}{K^{\psi}_{\xi,\varphi}(0)}\left(\int_0^{+\infty}e^{(1-q)t}\mathrm{d}t+\int_{-\infty}^0e^t\mathrm{d}t\right)\\
            &=\frac{q}{q-1}\cdot\frac{|(\xi\cdot F)(o)|^2}{K^{\psi}_{\xi,D}(o)}, \ \forall q>1.
        \end{split}
    \end{equation*}
    Note that $(\xi\cdot F)(o)\neq 0$, and $K^{\psi}_{\xi,D}(o)<+\infty$ (see \cite{BG2}). Then by monotone convergence theorem, we have $\int_D|F|^2e^{-\varphi-\psi}=+\infty$ by letting $q\to 1^+$. This gives Theorem \ref{SOP} (the strong openness property). Similar discussions can be given to Corollary \ref{cor-closed-lct} (related to the openness property, see \cite{Ber13}), Corollary \ref{thm-closed-lct}, and Corollary \ref{cor-closed-jn}.

    \section{Examples of toric plurisubharmonic functions}\label{section_toric}
    In this section, we do some calculations on toric plurisubharmonic functions, which gives an example for our definitions and results of $\xi-$Bergman kernels and the $\xi-$cse. For more details about the discussion in this section, it can be referred to \cite{Guenancia}.
    
    We first show some calculations related to the following shape of toric plurisubharmonic functions (which only depends on $|z_1|,\ldots,|z_n|$):
    \[\varphi_w(z):=\max_{1\leq i\leq n}\left\{\frac{1}{w_i}\log|z_i|\right\},\]
    where $w:=(w_1,\ldots,w_n)\in\mathbb{R}_+^n$, $z=(z_1,\ldots,z_n)\in\Delta^n$, and $\Delta^n$ is the unit polydisc in $\mathbb{C}^n$. Then it can be seen that for any $t\geq 0$,
    \[\{\varphi_w(z)<-t\}=\prod_{i=1}^n\{|z_i|<e^{-w_it}\}.\]
    Then for any $F=\sum_{\alpha}c_{\alpha}z^{\alpha}\in A^2(\{\varphi_w<-t\})$, we have
    \begin{equation*}
    \begin{split}
        \int_{\{\varphi_w<-t\}}|F|^2&=\sum_{\alpha}|c_{\alpha}|^2\int_{\{\varphi_w<-t\}}|z^{\alpha}|^2\\
        &=\sum_{\alpha}|c_{\alpha}|^2\left(\prod_{i=1}^n\int_{\{|z_i|<e^{-w_it}\}}|z_i|^{2\alpha_i}\right)\\
        =&\sum_{\alpha}|c_{\alpha}|^2\left(\pi^n\frac{e^{-2\sum_{i=1}^n(\alpha_i+1)w_it}}{\prod_{i=1}^n(\alpha_i+1)}\right).
        \end{split}
    \end{equation*}
    For any $\alpha\in\mathbb{N}^n$, $t\geq 0$, denote
    \[d_{\alpha}(t):=\int_{\{\varphi_w<-t\}}|z^{\alpha}|^2=\pi^n\frac{e^{-2\sum_{i=1}^n(\alpha_i+1)w_it}}{\prod_{i=1}^n(\alpha_i+1)}.\]
    Then for any $\xi=(\xi_{\alpha})_{\alpha}\in\ell_1$, by Cauchy-Schwarz inequality we have
    \begin{equation*}
        \begin{split}
        |(\xi\cdot F)(o)|^2&=\left|\sum_{\alpha}c_{\alpha}\xi_{\alpha}\right|^2\\
        &=\left|\sum_{\alpha}c_{\alpha}\sqrt{d_{\alpha}(t)}\cdot\frac{\xi_{\alpha}}{\sqrt{d_{\alpha}(t)}}\right|^2\\
        &\leq \left(\sum_{\alpha}|c_{\alpha}|^2d_{\alpha}(t)\right)\cdot\left(\sum_{\alpha}\frac{1}{d_{\alpha}(t)}|\xi_{\alpha}|^2\right)\\
        &\leq\left(\sum_{\alpha}\frac{1}{d_{\alpha}(t)}|\xi_{\alpha}|^2\right)\int_{\{\varphi_w<-t\}}|F|^2,
        \end{split}
    \end{equation*}
    where the equality holds if and only if $F=\lambda\sum_{\alpha}\frac{\overline{\xi_{\alpha}}}{d_{\alpha}(t)}z^{\alpha}\in A^2(\{\varphi_{w}<-t\})$ for some $\lambda\in\mathbb{C}$. Then we get
    \begin{equation*}
        K_{\xi,\varphi_w}(t)=\sum_{\alpha}\frac{1}{d_{\alpha}(t)}|\xi_{\alpha}|^2.
    \end{equation*}
    Note that
    \begin{equation*}
    \begin{split}
            \frac{\log d_{\alpha}(t)}{t}&=-2\sum_{i=1}^n(\alpha_i+1)w_i+\left(\log\frac{\pi^n}{\prod_{i=1}^n(\alpha_i+1)}\right)\frac{1}{t}\\
            &\to -2\sum_{i=1}^n(\alpha_i+1)w_i, \ t\to +\infty.
            \end{split}
    \end{equation*}
    Denote that
    \[\langle \alpha+\mathbbm{1}, w\rangle:=\sum_{i=1}^n(\alpha_i+1)w_i.\]
    Since $\xi\in\ell_1$, we can conclude that
    \begin{equation*}
    \gamma_{\xi}(\varphi_w)=\lim_{t\rightarrow +\infty}\frac{\log K_{\xi,\varphi_w}(t)}{t}=2\max_{\xi_{\alpha}\neq 0}\langle \alpha+\mathbbm{1}, w\rangle.
    \end{equation*}
    
    Recall that (see \cite{BG3})
    \[\ell_0:=\left\{\xi=(\xi_{\alpha})_{\alpha}\in\ell_1 : \exists k\in\mathbb{N}, \ \text{s.t.\ }\xi_{\alpha}=0, \ \forall |\alpha|>k\right\}.\]
    Then it can be seen that $\gamma_{\xi}(\varphi_w)=+\infty \Leftrightarrow \xi\in\ell_1\setminus\ell_0$. In addition, we note that
    \[\min_{\xi\in\ell_1\setminus\{0\}}\gamma_{\xi}(\varphi_w)=2\sum_{i=1}^nw_i,\]
    and
    \[\min_{\xi\in\ell_1,\ (\xi\cdot F)(o)\neq 0}\gamma_{\xi}(\varphi_w)=2\min_{c_{\alpha}\neq 0}\langle \alpha+\mathbbm{1}, w\rangle\]
    for $F=\sum_{\alpha}c_{\alpha}z^{\alpha}\in\mathcal{O}_o$. It is well-known that
    \[c_o(\varphi_w)=2\sum_{i=1}^nw_i,\]
    and
    \[c_o^F(\varphi_w)=2\min_{c_{\alpha}\neq 0}\langle \alpha+\mathbbm{1}, w\rangle\]
    for the above $F$. Now direct calculations show that $\varphi_w$ actually satisfies Corollary \ref{cor-gln-lct} and \ref{cor-closed-jn}. Moreover, the above calculations also show that $\varphi_w$ satisfies Corollary \ref{cor-closed-lct} and \ref{cor-closed-jn}. 
    
    We continue to give some calculations for general toric plurisubharmonic functions. Before the discussions, we consider the following integral:
    \[I^{\textbf{k}}_a(s):=\int_{\{\sum_{i=1}^na_ix_i>s \ \& \ x_i>0, \forall 1\leq i\leq n\}}e^{-\sum_{i=1}^n k_ix_i}\mathrm{d}x_1\cdots\mathrm{d}x_n,\]
    where $\textbf{k}:=(k_1,\ldots,k_n)\in\mathbb{N}^n$, $a:=(a_1,\ldots,a_n)\in\mathbb{R}_+^n$, and $s\in [0,+\infty)$. It is easy to see that for any $i\in\{1,\ldots,n\}$,
    \begin{equation*}
        \begin{split}
        I^{\textbf{k}}_a(s)&\geq \int_{\{a_ix_i>s \ \& \ x_j>0, \forall j\neq i\}}e^{-\sum_{i=1}^n k_ix_i}\mathrm{d}x_1\cdots\mathrm{d}x_n\\
        &=\frac{1}{k_1\cdots k_n}e^{-\frac{k_i}{a_i}s},
        \end{split}
    \end{equation*}
    which implies that
    \begin{equation}\label{I_a^k(s)}
        I^{\textbf{k}}_a(s)\geq \frac{1}{k_1\cdots k_n}e^{-\left(\min_{1\leq i\leq n}\frac{k_i}{a_i}\right)s}.
    \end{equation}

    Now let $\varphi$ be a negative toric plurisubharmonic function (recall that this means $\varphi(z)$ only depends on $|z_1|,\ldots$, and $|z_n|$) on $\Delta^n$, the following lemma shows that $\varphi$ can be written as $\varphi(z)=f(\log|z_1|,\ldots,\log|z_n|)$, where $f$ is convex and non-decreasing in each variable:
    \begin{Lemma}[see \cite{DemaillyAG, Guenancia}]
    Let $\varphi$ be a toric plurisubharmonic function on $\Delta^n$. Then there exists a convex function $f$, non-decreasing in each variable, defined on $(-\infty, 0)$ such that for all $z=(z_1,\ldots, z_n)\in\Delta^n$, we have $\varphi(z)=f(\log|z_1|,\ldots,\log|z_n|)$.
    \end{Lemma}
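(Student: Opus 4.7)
The plan is to define $f:(-\infty,0)^n\to[-\infty,+\infty)$ by
\[
f(t_1,\ldots,t_n):=\varphi(e^{t_1},\ldots,e^{t_n}),
\]
which is well-defined since $\varphi$ is toric, and then to establish its convexity and monotonicity separately. Note that this automatically gives the identity $\varphi(z)=f(\log|z_1|,\ldots,\log|z_n|)$ on $(\Delta^*)^n$; the extension to points with some $z_j=0$ is then handled by upper semicontinuity of $\varphi$, corresponding to sending $t_j\to-\infty$ in $f$.

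For the convexity of $f$, I would pull $\varphi$ back by the holomorphic map
\[
\Phi:H^n\to(\Delta^*)^n,\qquad \Phi(w_1,\ldots,w_n)=(e^{w_1},\ldots,e^{w_n}),
\]
where $H:=\{w\in\mathbb{C}:\operatorname{Re}(w)<0\}$. Since $\Phi$ is holomorphic and $\varphi$ is plurisubharmonic, the composition $\tilde{\varphi}:=\varphi\circ\Phi$ is plurisubharmonic on the tube domain $H^n$. Toricity of $\varphi$ translates into invariance of $\tilde{\varphi}$ under the imaginary translations $w\mapsto w+i\theta$ for $\theta\in\mathbb{R}^n$; equivalently, $\tilde{\varphi}(w)=f(\operatorname{Re}w_1,\ldots,\operatorname{Re}w_n)$. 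The crucial step is the classical fact that a plurisubharmonic function on a tube domain which depends only on the real parts is convex in those real coordinates; I would verify this by restricting to an arbitrary real-affine line $s\mapsto t'+sv$ with $t'\in(-\infty,0)^n$ and $v\in\mathbb{R}^n$, complexifying it to a holomorphic line $\zeta\mapsto t'+\zeta v$ inside $H^n$, and invoking the one-variable statement that a subharmonic function on a vertical strip which depends only on $\operatorname{Re}\zeta$ is convex in $\operatorname{Re}\zeta$ (via Jensen's inequality on a thin rectangle, letting its imaginary height tend to zero).

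For monotonicity in each variable, I would fix an index $j$ and freeze the remaining arguments. Then $z_j\mapsto\varphi(z_1,\ldots,z_n)$ is subharmonic on $\Delta$ and depends only on $|z_j|$; a radial subharmonic function of one complex variable is automatically non-decreasing in the radius, as one sees at once from the sub-mean-value inequality applied on circles centred at the origin. Translating back through the exponential, $t_j\mapsto f(t_1,\ldots,t_n)$ is non-decreasing, as required.

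The main obstacle I expect is the conceptual one in the second paragraph: justifying carefully that \emph{plurisubharmonic $+$ invariance under imaginary translations} collapses to ordinary convexity in the real base. Beyond this step, the rest of the argument is routine: convexity of $f$ on $(-\infty,0)^n$ is inherited line by line from $\tilde{\varphi}$, monotonicity follows from the one-variable radial statement, and the passage from $(\Delta^*)^n$ to the full polydisc $\Delta^n$ is controlled by upper semicontinuity together with the monotone convergence of $f$ as any $t_j\to-\infty$.
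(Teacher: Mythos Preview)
The paper does not give its own proof of this lemma; it is stated with a citation to \cite{DemaillyAG, Guenancia} and used as a black box. Your proposal is correct and is precisely the standard argument found in those references: pull back by the exponential map to a tube domain, use that a plurisubharmonic function invariant under imaginary translations is convex in the real base, and deduce monotonicity in each variable from the one-variable maximum principle for radial subharmonic functions (for $0<r_1<r_2$ the maximum of $u$ on $\overline{\Delta_{r_2}}$ is attained on the boundary circle, hence $u(r_1)\le u(r_2)$). There is nothing to compare against in the present paper.
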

    Moreover, for the toric plurisubharmonic function $\varphi(z)=f(\log|z_1|,\ldots,\log|z_n|)$ as above, denote $g(\textbf{x}):=-f(-\textbf{x})$. Then $g$ is a function defined on $\mathbb{R}_+$ which is concave, positive, and non-increasing in each variable. And the Newton convex body of $g$ is defined as:
    \[P(g):=\{\lambda\in\mathbb{R}^n : g(\textbf{x})-\langle \lambda, \textbf{x} \rangle \leq O(1) \ \text{as\ } |\textbf{x}|\to +\infty\}.\]
    In addition, denote that $P(\varphi):=P(g)$. We give the following estimate of $\gamma_{\xi}(\varphi)$ for toric plurisubharmonic function $\varphi$:
    \begin{Proposition}\label{estimate-gln-toric}
        Assume $\lambda:=(\lambda_1,\ldots,\lambda_n)\in\mathbb{R}_+^n$ such that $\lambda\in P(\varphi)$, and $\xi\in\ell_1$. Then we have
        \[\gamma_{\xi}(\varphi)\geq 2\max_{\xi_{\alpha}\neq 0}\left(\min_{1\leq i\leq n}\frac{\alpha_i+1}{\lambda_i}\right).\]
    \end{Proposition}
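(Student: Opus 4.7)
The plan is to extract from $\lambda \in P(\varphi)$ a pointwise lower bound for $\varphi$ near the origin, and then plug monomial test functions into the definition of the $\xi-$Bergman kernel. Since $g(\mathbf{x}) = -f(-\mathbf{x})$ and $\lambda \in P(\varphi) = P(g)$, the condition $g(\mathbf{x}) - \langle \lambda,\mathbf{x}\rangle \leq O(1)$ as $|\mathbf{x}|\to +\infty$ translates, after the substitution $\mathbf{x} = -(\log|z_1|,\ldots,\log|z_n|)$, into the existence of some $r \in (0,1)$ and $C > 0$ for which
\[\varphi(z) \geq \sum_{i=1}^n \lambda_i \log|z_i| - C \quad \text{on } \Delta^n_r.\]
Since $\gamma_\xi(\varphi)$ does not depend on the bounded pseudoconvex ambient domain (by Theorem \ref{Thm-xicse-local}), I would replace $\Delta^n$ by $\Delta^n_r$, yielding $\{\varphi<-t\}\cap\Delta^n_r \subseteq \{\sum_i \lambda_i\log|z_i| < -t+C\}\cap\Delta^n_r$ for every $t>0$.

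Next, fix any $\alpha\in\mathbb{N}^n$ with $\xi_\alpha\neq 0$ and use the monomial $F(z) = z^\alpha$ as a test function, so that $(\xi\cdot F)(o) = \xi_\alpha\neq 0$. Passing to $x_i = -\log|z_i|$ exactly as in the toric computations preceding the proposition, and enlarging the integration domain from $\{x_i > -\log r\}$ to $\{x_i>0\}$ to match the form of $I^{\mathbf{k}}_a(s)$, one obtains
\[\int_{\{\varphi<-t\}\cap\Delta^n_r}|z^\alpha|^2\, \mathrm{d}V \;\leq\; (2\pi)^n\, I^{\mathbf{k}}_{\lambda}(t-C),\qquad \mathbf{k} := \bigl(2(\alpha_1+1),\ldots,2(\alpha_n+1)\bigr).\]
Combining with the trivial bound $K_{\xi,\varphi}(t) \geq |\xi_\alpha|^2 \big/ \int_{\{\varphi<-t\}\cap\Delta^n_r}|z^\alpha|^2\,\mathrm{d}V$, the proposition will follow once a matching upper bound on $I^{\mathbf{k}}_\lambda(s)$ is in hand.

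That upper bound is the main (mild) technical point and complements the lower bound \eqref{I_a^k(s)}. I would prove it by a Chernoff-type Laplace transform estimate: since $\sum_i \lambda_i x_i > s$ on the integration domain, for any $c \in (0,\min_i k_i/\lambda_i)$ we have $e^{-\sum_i k_i x_i} \leq e^{-cs}\, e^{-\sum_i(k_i - c\lambda_i)x_i}$, hence
\[I^{\mathbf{k}}_{\lambda}(s) \leq \frac{e^{-cs}}{\prod_{i=1}^n(k_i - c\lambda_i)}.\]
Writing $k := \min_i(\alpha_i+1)/\lambda_i$ and taking $c = 2k - \epsilon$ for arbitrary $\epsilon>0$, this yields $I^{\mathbf{k}}_{\lambda}(t-C) \leq C_\epsilon\, e^{-(2k-\epsilon)t}$ for $t$ large, and therefore $\gamma_\xi(\varphi) \geq 2k - \epsilon$. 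Letting $\epsilon\to 0$ and taking the supremum over $\alpha$ with $\xi_\alpha\neq 0$ gives the claimed lower bound for $\gamma_\xi(\varphi)$. No openness or $L^2$-extension input is needed beyond what has already been developed in Section \ref{section-prep}.
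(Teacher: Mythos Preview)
Your argument is correct and follows the same route as the paper: pass from $\varphi$ to the comparison weight $\sum_i\lambda_i\log|z_i|$ via $\lambda\in P(\varphi)$, plug in the monomial test functions $z^\alpha$ with $\xi_\alpha\neq 0$, and reduce everything to the asymptotics of the integral $I^{\mathbf{k}}_\lambda(s)$. In fact your write-up is more complete on the key technical point: what the proposition actually needs is an \emph{upper} bound $I^{\mathbf{k}}_\lambda(s)\le C_\epsilon\,e^{-(2k-\epsilon)s}$, which you obtain cleanly by the exponential-tilting (Chernoff) estimate; the paper instead displays the lower bound~\eqref{I_a^k(s)} on $I^{\mathbf{k}}_\lambda$ --- which by itself gives only $-\lim_{t\to\infty}\tfrac{\log d_\alpha(t)}{t}\le 2\min_i\tfrac{\alpha_i+1}{\lambda_i}$, the wrong direction --- and leaves the matching upper bound implicit in the phrase ``by the similar discussion as above.'' Your invocation of Theorem~\ref{Thm-xicse-local} to shrink the ambient polydisc plays the same role as the paper's one-line monotonicity observation $\gamma_\xi(\varphi_1)\le\gamma_\xi(\varphi_2)$ when $\varphi_1\le\varphi_2+O(1)$ near $o$.
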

    \begin{proof}
    By definition, it is easy to see that $\gamma_{\xi}(\varphi_1)\leq \gamma_{\xi}(\varphi_2)$ if $\varphi_1\leq \varphi_2+O(1)$ near $o$. Then we have
    \[\gamma_{\xi}(\varphi)\geq \gamma_{\xi}\left(\sum_{i=1}^n\lambda_i\log|z_i|\right),\]
    for $\varphi(z)=-g(-\log|z_1|,\ldots,-\log|z_n|)$ and $\lambda\in P(\varphi)$. By direct calculation and using (\ref{I_a^k(s)}), we get
    \begin{equation}
    \begin{split}
        d_{\alpha}(t):&=\int_{\{\sum_{i=1}^n\lambda_i\log|z_i|<-t\}}|z^{\alpha}|^2\\
        &=(2\pi)^n\int_{\{\sum_{i=1}^n\lambda_ix_i>t \ \& \ x_i>0, \forall 1\leq i\leq n\}}e^{-\sum_{i=1}^n(2\alpha_i+2)x_i}\mathrm{d}x_1\cdots\mathrm{d}x_n\\
        &\geq C_{\lambda}^{\alpha}e^{-\left(\min_{1\leq i\leq n}\frac{2\alpha_i+2}{\lambda_i}\right)t},
        \end{split}
    \end{equation}
    for some positive constant $C_{\lambda}^{\alpha}$. By the similar discussion as above for toric plurisubharmonic function of shape $\varphi_w:=\max_{1\leq i\leq n}\frac{1}{w_i}\log|z_i|$, we are also able to get
    \[\gamma_{\xi}\left(\sum_{i=1}^n\lambda_i\log|z_i|\right)\geq\max_{\xi_{\alpha}\neq 0}\left(-\lim_{t\to+\infty}\frac{d_{\alpha}(t)}{t}\right)\geq 2\max_{\xi_{\alpha}\neq 0}\left(\min_{1\leq i\leq n}\frac{\alpha_i+1}{\lambda_i}\right).\]
    Then it follows that
    \[\gamma_{\xi}(\varphi)\geq 2\max_{\xi_{\alpha}\neq 0}\left(\min_{1\leq i\leq n}\frac{\alpha_i+1}{\lambda_i}\right).\]
    \end{proof}
    
    We can also write Proposition \ref{estimate-gln-toric} as
    \begin{equation}
        \gamma_{\xi}(\varphi)\geq 2\max_{\lambda\in P(\varphi)\cap \mathbb{R}^n_+}\max_{\xi_{\alpha}\neq 0}\left(\min_{1\leq i\leq n}\frac{\alpha_i+1}{\lambda_i}\right).
    \end{equation}
    
    As a corollary of Proposition \ref{estimate-gln-toric}, we give a new proof of the following partly result of Theorem 1.13 in \cite{Guenancia}.
    \begin{Corollary}[see \cite{Guenancia}]\label{int(P(varphi))}
        Let $\varphi$ be a toric plurisubharmonic function on $\Delta^n$. Then we have
    \[z^{\alpha}=z_1^{\alpha_1}\cdots z_n^{\alpha_n}\in\mathcal{I}(2\varphi)_o \Leftarrow \alpha+\mathbbm{1}\in \text{int}\left(P(\varphi)\right),\]
    where $\text{int}\left(P(\varphi)\right)$ denotes the interior of $P(\varphi)$.
    \end{Corollary}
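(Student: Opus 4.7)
My plan is to deduce the corollary from Proposition \ref{estimate-gln-toric} combined with Corollary \ref{cor-gln-jn}. The idea is to convert the interiority hypothesis $\alpha + \mathbbm{1} \in \text{int}(P(\varphi))$ into a strict lower bound $\gamma_\xi(\varphi) > 2$ valid for every functional $\xi$ detecting $z^\alpha$, then use the identification of the jumping number with an infimum of $\xi-$cse's to deduce $c_o^{z^\alpha}(\varphi) > 2$; this immediately yields $(z^\alpha, o) \in \mathcal{I}(2\varphi)_o$.

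First, since $P(\varphi)$ is convex and $\alpha + \mathbbm{1}$ has all positive coordinates and lies in $\text{int}(P(\varphi))$, a small perturbation in the $-\mathbbm{1}$ direction produces some $\lambda \in P(\varphi) \cap \mathbb{R}_+^n$ with $\lambda_i < \alpha_i + 1$ for every $i$, for example $\lambda := \alpha + \mathbbm{1} - \epsilon\mathbbm{1}$ for sufficiently small $\epsilon > 0$. Setting
\[\delta := \min_{1 \leq i \leq n}\frac{\alpha_i + 1}{\lambda_i},\]
this choice guarantees $\delta > 1$. For any $\xi \in \ell_1$ with $(\xi \cdot z^\alpha)(o) \neq 0$, equivalently $\xi_\alpha \neq 0$, Proposition \ref{estimate-gln-toric} gives $\gamma_\xi(\varphi) \geq 2\delta > 2$, since the maximum inside that bound, taken over all $\beta$ with $\xi_\beta \neq 0$, dominates the $\beta = \alpha$ contribution.

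Applying Corollary \ref{cor-gln-jn} to $F = z^\alpha$ (its hypothesis $c_o^{z^\alpha}(\varphi) > 0$ holds automatically, because $\varphi$ has finite Lelong number at $o$ so already $c_o(\varphi) > 0$), we obtain
\[c_o^{z^\alpha}(\varphi) \;=\; \inf_{\xi \in \ell_1,\ (\xi \cdot z^\alpha)(o)\neq 0} \gamma_\xi(\varphi) \;\geq\; 2\delta \;>\; 2.\]
By the very definition of the jumping number, some $c > 2$ makes $|z^\alpha|^2 e^{-c\varphi}$ locally integrable at $o$; since $\varphi$ is locally bounded above (as a plurisubharmonic function), the same integrability persists at $c = 2$, so $(z^\alpha, o) \in \mathcal{I}(2\varphi)_o$.

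The delicate step is the selection of $\lambda$: strict interiority of $\alpha + \mathbbm{1}$ in $P(\varphi)$ is precisely what supplies the margin $\delta > 1$ needed to cross the threshold $2$. Beyond this, the proof is an assembly of results already established in the note, and I do not expect any serious obstacle.
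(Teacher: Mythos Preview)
Your proof is correct and follows essentially the same route as the paper: perturb $\alpha+\mathbbm{1}$ to $\lambda=\alpha+(1-\epsilon)\mathbbm{1}\in P(\varphi)\cap\mathbb{R}_+^n$, apply Proposition~\ref{estimate-gln-toric} at this $\lambda$ to obtain a uniform lower bound $\gamma_\xi(\varphi)\geq 2\min_i\frac{\alpha_i+1}{\alpha_i+1-\epsilon}>2$ for every $\xi$ with $\xi_\alpha\neq 0$, and conclude via Corollary~\ref{cor-gln-jn}. The only differences are cosmetic: you spell out the verification of the hypothesis $c_o^{z^\alpha}(\varphi)>0$ and the passage from $c_o^{z^\alpha}(\varphi)>2$ to membership in $\mathcal{I}(2\varphi)_o$, which the paper leaves implicit.
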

    \begin{proof}
    We can assume that $\varphi$ is negative. By Corollary \ref{cor-gln-jn}, it suffices to prove for any $\xi\in\ell_1$ with $(\xi\cdot z^{\alpha})(o)\neq 0$, we have $\gamma_{\xi}(\varphi)>2+\delta$ for some $\delta>0$. Since $\alpha+\mathbbm{1}\in \text{int}\left(P(\varphi)\right)$, there exists some $\epsilon\in (0,1)$ such that $\alpha+(1-\epsilon)\mathbbm{1}\in P(\varphi)$. Then for any $\xi$ with $(\xi\cdot z^{\alpha})(o)\neq 0$, we have $\xi_{\alpha}\neq 0$, and it follows from Proposition \ref{estimate-gln-toric} that
    \[\gamma_{\xi}(\varphi)\geq 2\max_{\xi_{\beta}\neq 0}\left(\min_{1\leq i\leq n}\frac{\beta_i+1}{\alpha_i+1-\epsilon}\right)\geq 2\min_{1\leq i\leq n}\frac{\alpha_i+1}{\alpha_i+1-\epsilon},\]
    where $\delta:=\min_{1\leq i\leq n}\frac{\alpha_i+1}{\alpha_i+1-\epsilon}-1>0$. The proof is done.
    \end{proof}
    
    Actually, the converse of Corollary \ref{int(P(varphi))} is also true, where more details can be referred to \cite{Guenancia}. 
    
    \section{Valuative point of view}\label{valuative-section}
    We recall the Kiselman numbers (see \cite{DemaillyAG}). Let $\varphi$ be a plurisubharmonic function on $\Delta^n$, and denote $\varphi_w:=\max_{1\leq i\leq n}\{\frac{1}{w_i}\log|z_i|\}$ as above. The Kiselman number of $\varphi$ with respect to $w$ is denoted by
    \[\nu_{w}(\varphi):=\sup\{a\geq 0 : \varphi\leq a\varphi_{w}+O(1) \ \text{near\ } o\}.\]
    The whole set of such $\nu_w$ for all $w\in \mathbb{R}_+^n$ is denoted by $\mathcal{V}_{\mathrm{m}}$, which is also the space of monomial valuations. Actually, it is defined that
    \[\nu_w(f):=\nu_w(\log|f|)=\min_{f^{(\alpha)}(o)\neq 0}\langle\alpha,w\rangle=\min_{f^{(\alpha)}(o)\neq 0}\sum_{i=1}^nw_i\alpha_i.\]
    The thinness of those valuations is denoted by
    \[A(\nu_w):=|w|=\sum_{i=1}^nw_i.\]
    The following well-known characterization of the multiplier ideal is presented in \cite{BFJ08}:
    \begin{Theorem}[see \cite{BFJ08}]\label{thm-valuation-BFJ08}
    Let $f$ be a holomorphic function near $o$ such that $f\in \mathcal{I}(2\varphi)_o$. Then for any $\nu\in \mathcal{V}_{\mathrm{m}}$, we have
    \[\frac{\nu(\varphi)}{\nu(f)+A(\nu)}<1.\]
    \end{Theorem}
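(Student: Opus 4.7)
The plan is to combine the strong openness property (Theorem \ref{SOP}) with the explicit jumping-number computation for the toric model $\varphi_w$ carried out in Section \ref{section_toric}. Fix a monomial valuation $\nu = \nu_w \in \mathcal{V}_{\mathrm{m}}$, set $\varphi_w(z) := \max_{1 \leq i \leq n}\frac{1}{w_i}\log|z_i|$, and note that the denominator $\nu_w(f) + A(\nu_w) \geq |w| > 0$ is automatically positive; moreover, the edge case $\nu_w(\varphi) = 0$ makes the inequality trivial, so we may assume $\nu_w(\varphi) > 0$ below (and hence $c_o^f(\varphi) < +\infty$).

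From $f \in \mathcal{I}(2\varphi)_o$ the jumping number $c_o^f(\varphi)$ is at least $2$, and Theorem \ref{SOP} forces it to be strictly greater than $2$: there exists $c > 2$ with $|f|^2 e^{-c\varphi} \in L^1_{\mathrm{loc}}$ near $o$. By the definition of the Kiselman number, for every $a < \nu_w(\varphi)$ one has $\varphi \leq a\varphi_w + O(1)$ on some neighborhood of $o$, which gives the pointwise comparison $e^{-c\varphi} \geq C_1 e^{-ca\varphi_w}$ there. Multiplying by $|f|^2$ and integrating transfers the local integrability of $|f|^2 e^{-c\varphi}$ to that of $|f|^2 e^{-ca\varphi_w}$, hence $c_o^f(\varphi_w) \geq ca$; letting $a \uparrow \nu_w(\varphi)$ we obtain $c_o^f(\varphi_w) \geq c\,\nu_w(\varphi)$.

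Now invoke the explicit toric computation of Section \ref{section_toric}: writing $f = \sum_\alpha c_\alpha z^\alpha$, the direct calculation there yields $c_o^f(\varphi_w) = 2\min_{c_\alpha \neq 0}\langle \alpha + \mathbbm{1}, w\rangle = 2\bigl(\nu_w(f) + A(\nu_w)\bigr)$. Combining with the previous step, $c\,\nu_w(\varphi) \leq 2(\nu_w(f) + A(\nu_w))$, and since $c > 2$,
\[\frac{\nu_w(\varphi)}{\nu_w(f) + A(\nu_w)} \leq \frac{2}{c} < 1,\]
which is the desired bound.

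The main obstacle is precisely the strict inequality. The same argument carried out with $c = 2$ would only deliver the non-strict bound $\nu_w(\varphi) \leq \nu_w(f) + A(\nu_w)$; the strict improvement is purchased exclusively by strong openness, which supplies the headroom exponent $c > 2$. Everything else—the pointwise comparison on a fixed neighborhood of $o$ and the toric jumping-number formula—is straightforward and already developed in the excerpt.
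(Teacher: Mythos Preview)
The paper does not actually prove this theorem: it is quoted from \cite{BFJ08} as a known result and no argument is supplied. Your proof is correct and self-contained.

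What the paper \emph{does} prove is the weaker non-strict statement, Corollary \ref{cor-nu}, namely $\frac{\nu(\varphi)}{\nu(f)+A(\nu)}\cdot c_o^f(2\varphi)\leq 1$, and there the route is different from yours. The paper derives it from Proposition \ref{thm-nu(xi)} (an estimate on $\gamma_\xi$) together with Corollary \ref{cor-gln-jn}, i.e.\ through the $\xi$-cse machinery. Your argument bypasses that machinery entirely: you use the pointwise comparison $\varphi\leq a\varphi_w+O(1)$ to transfer local $L^1$ integrability directly, then plug in the explicit toric jumping-number formula $c_o^f(\varphi_w)=2(\nu_w(f)+A(\nu_w))$ from Section \ref{section_toric}. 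Both approaches recover the same non-strict bound; the added value of your argument is that you then invoke strong openness (Theorem \ref{SOP}) to obtain $c_o^f(\varphi)>2$ and hence the \emph{strict} inequality of Theorem \ref{thm-valuation-BFJ08}, a step the paper never carries out since it treats the strict version as an external input. Your observation that strictness is purchased exclusively by strong openness is exactly right.
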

    
    Inspired by Theorem \ref{thm-valuation-BFJ08}, we obtain the following estimate for $\xi-$cse. For any $\nu_w\in\mathcal{V}_{\mathrm{m}}$ and any $\xi\in\ell_1$, denote that
    \[\nu_w(\xi):=\max_{\xi_{\alpha}\neq 0}\langle \alpha, w\rangle.\]
    \begin{Proposition}\label{thm-nu(xi)}
    For any $\nu\in\mathcal{V}_{\mathrm{m}}$ and $\xi\in\ell_1$, we have
    \begin{equation}\label{ineq-nu(xi)}
        \frac{\nu(\varphi)}{\nu(\xi)+A(\nu)}\cdot\gamma_{\xi}(2\varphi)\leq 1.
    \end{equation}
    Here $0\cdot\infty:=0$.
    \end{Proposition}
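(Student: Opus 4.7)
The plan is to compare $\varphi$ directly with $\nu(\varphi)\varphi_w$, then invoke the explicit formula for $\gamma_{\xi}(\varphi_w)$ that was computed in Section \ref{section_toric}. Write $\nu = \nu_w$ for some $w \in \mathbb{R}_+^n$. The edge cases are handled by convention: if $\nu(\varphi) = 0$ the inequality is $0 \cdot \gamma_{\xi}(2\varphi) \leq 1$, which is trivial; and since $A(\nu) = |w|$ is always finite, the denominator $\nu(\xi) + A(\nu)$ can only be infinite when $\nu(\xi) = \infty$, in which case the fraction $\nu(\varphi)/(\nu(\xi)+A(\nu))$ is $0$ and the inequality again holds by the $0 \cdot \infty := 0$ convention. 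So assume $0 < \nu(\varphi) < \infty$ and $\nu(\xi) < \infty$.

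Next, by the definition of $\nu_w(\varphi)$, for every $a$ with $0 < a < \nu(\varphi)$, there exists $C > 0$ such that $\varphi \leq a\varphi_w + C$ near $o$. Since $\{\varphi < -t\} \supseteq \{a\varphi_w < -t+C\}$, one has $K_{\xi,\varphi}(t) \leq K_{\xi,a\varphi_w}(t-C)$, hence the monotonicity $\gamma_{\xi}(\varphi) \leq \gamma_{\xi}(a\varphi_w)$ (this is precisely the monotonicity used at the start of the proof of Proposition \ref{estimate-gln-toric}). By Lemma \ref{ctimes}(ii), $\gamma_{\xi}(a\varphi_w) = \frac{1}{a}\gamma_{\xi}(\varphi_w)$. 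I then feed in the explicit calculation from Section \ref{section_toric},
\[
\gamma_{\xi}(\varphi_w) = 2\max_{\xi_{\alpha}\neq 0}\langle \alpha+\mathbbm{1}, w\rangle = 2\bigl(\nu(\xi)+A(\nu)\bigr),
\]
where the last equality just uses $\langle \alpha+\mathbbm{1}, w\rangle = \langle \alpha, w\rangle + |w|$ and the definitions $\nu(\xi) = \max_{\xi_\alpha\neq 0}\langle\alpha,w\rangle$ and $A(\nu) = |w|$. Combining gives $\gamma_{\xi}(\varphi) \leq 2(\nu(\xi)+A(\nu))/a$ for every $a<\nu(\varphi)$, and letting $a \to \nu(\varphi)^-$ followed by Lemma \ref{ctimes}(ii) applied to the scalar $2$ yields the stated inequality.

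Both ingredients (the monotonicity and the explicit toric computation) are already in hand, so there is no substantial obstacle; the only real care needed is in the bookkeeping of edge cases and in verifying that the trivial domination $\varphi \leq a\varphi_w + O(1)$ transfers to a comparison of the asymptotic slopes $\gamma_\xi$. This is why I separate out the monotonicity statement cleanly before plugging into the explicit formula.
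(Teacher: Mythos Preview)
Your proof is correct and follows essentially the same route as the paper's: compare $\varphi$ with $a\varphi_w$ via the definition of the Kiselman number, use the monotonicity of $\gamma_\xi$ under $\varphi_1\le\varphi_2+O(1)$, plug in the explicit formula $\gamma_\xi(\varphi_w)=2\max_{\xi_\alpha\ne 0}\langle\alpha+\mathbbm{1},w\rangle$, and let $a\to\nu(\varphi)$. One cosmetic slip: from $\varphi\le a\varphi_w+C$ the containment should read $\{\varphi<-t\}\supseteq\{a\varphi_w<-(t+C)\}$ (not $-t+C$), but since the constant washes out in the limit $t\to\infty$ this does not affect your conclusion $\gamma_\xi(\varphi)\le\gamma_\xi(a\varphi_w)$.
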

    
    \begin{proof}
    Let $\nu_w\in\mathcal{V}_{\mathrm{m}}$ for any fixed $w\in \mathbb{R}_+^n$. We may assume $\nu_w(\varphi)>0$, otherwise the result is trivial. Then for any $a\in (0,\nu_w(\varphi))$, we have $\varphi\leq a\varphi_w+O(1)$ near $o$. It means that for any $\xi\in\ell_1$,
    \[\gamma_{\xi}(2\varphi)\leq \gamma_{\xi}(2a\varphi_w)=\frac{1}{2a}\gamma_{\xi}(\varphi_w)=\frac{1}{a}\max_{\xi_{\alpha}\neq 0}\langle \alpha+\mathbbm{1},w\rangle.\]
    The last equality comes from the calculations in Section \ref{section_toric}. It follows that
    \[\gamma_{\xi}(2\varphi)\leq\inf_{a\in (0,\nu_w(\varphi))}\frac{1}{a}\max_{\xi_{\alpha}\neq 0}\langle \alpha+\mathbbm{1},w\rangle=\frac{1}{\nu_w(\varphi)}\max_{\xi_{\alpha}\neq 0}\langle \alpha+\mathbbm{1},w\rangle.\]
    In addition, we have
    \[\nu_w(\xi)+A(\nu_w)=\max_{\xi_{\alpha}\neq 0}\langle\alpha+\mathbbm{1},w\rangle.\]
    Then we summarize that
    \begin{equation*}
        \frac{\nu_w(\varphi)}{\nu_w(\xi)+A(\nu_w)}\cdot\gamma_{\xi}(2\varphi)\leq 1.
    \end{equation*}
    \end{proof}
    
    Inequality (\ref{ineq-nu(xi)}) can also be written as
    \[\gamma_{\xi}(\varphi)\leq 2\inf_{\nu\in\mathcal{V}_{\mathrm{m}}}\frac{\nu(\xi)+A(\nu)}{\nu(\varphi)}.\]
    
    Additionally, following from Proposition \ref{thm-nu(xi)}, we can obtain a new proof of the following corollary, which is also a corollary of Theorem \ref{thm-valuation-BFJ08}.
    \begin{Corollary}[see \cite{BFJ08}]\label{cor-nu}
    Let $(f,o)\in\mathcal{O}_o$. Then for any $\nu\in\mathcal{V}_{\mathrm{m}}$, we have
    \[\frac{\nu(\varphi)}{\nu(f)+A(\nu)}\cdot c_o^f(2\varphi)\leq 1.\]
    Here $0\cdot\infty:=0$.
    \end{Corollary}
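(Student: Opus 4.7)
The plan is to deduce Corollary \ref{cor-nu} from Proposition \ref{thm-nu(xi)} combined with the jumping-number version of Theorem \ref{thm-gln-jn}, that is, Corollary \ref{cor-gln-jn}.

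If $c_o^f(2\varphi) = 0$ the inequality is immediate, so I may assume $c_o^f(2\varphi) > 0$. Under this assumption Corollary \ref{cor-gln-jn} applied to the weight $2\varphi$ gives
\[c_o^f(2\varphi) = \inf_{\xi \in \ell_1,\ (\xi \cdot f)(o) \neq 0} \gamma_\xi(2\varphi),\]
so it suffices to exhibit a single functional $\xi$ with $(\xi \cdot f)(o) \neq 0$ for which Proposition \ref{thm-nu(xi)} forces the desired inequality.

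Fix $\nu = \nu_w \in \mathcal{V}_{\mathrm{m}}$ and expand $f(z) = \sum_\alpha a_\alpha z^\alpha$. By the formula $\nu_w(f) = \min_{a_\alpha \neq 0} \langle \alpha, w \rangle$ recalled in Section \ref{valuative-section}, I can choose an index $\alpha_0$ realizing this minimum and set $\xi_{\alpha_0} = 1$, $\xi_\alpha = 0$ for $\alpha \neq \alpha_0$. Then $\xi \in \ell_0 \subset \ell_1$, $(\xi \cdot f)(o) = a_{\alpha_0} \neq 0$, and directly from the definition
\[\nu_w(\xi) = \max_{\xi_\alpha \neq 0} \langle \alpha, w \rangle = \langle \alpha_0, w \rangle = \nu_w(f).\]

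Proposition \ref{thm-nu(xi)} applied to this $\xi$ then yields
\[\frac{\nu_w(\varphi)}{\nu_w(f) + A(\nu_w)}\, \gamma_\xi(2\varphi) = \frac{\nu_w(\varphi)}{\nu_w(\xi) + A(\nu_w)}\, \gamma_\xi(2\varphi) \leq 1,\]
and since $c_o^f(2\varphi) \leq \gamma_\xi(2\varphi)$ by the infimum formula, the same bound holds with $c_o^f(2\varphi)$ in place of $\gamma_\xi(2\varphi)$. There is no real obstacle here: the one genuine observation is the identity $\nu_w(\xi) = \nu_w(f)$ for the explicit $\xi$ above, which lets us transfer the estimate from linear functionals to holomorphic germs.
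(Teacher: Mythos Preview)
Your proof is correct and follows essentially the same route as the paper: choose the ``Dirac'' functional $\xi$ supported at an index $\alpha_0$ realizing $\nu_w(f)$, observe $\nu_w(\xi)=\nu_w(f)$, apply Proposition~\ref{thm-nu(xi)}, and then use Corollary~\ref{cor-gln-jn} to pass from $\gamma_\xi(2\varphi)$ down to $c_o^f(2\varphi)$. Your explicit handling of the trivial case $c_o^f(2\varphi)=0$ is a small extra care not spelled out in the paper, but otherwise the arguments coincide.
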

    \begin{proof}
    Assume $\nu_w(f)=\langle \alpha,w\rangle$, where $\alpha\in\mathbb{N}^n$. Let $\eta=(\eta_{\beta})_{\beta}\in\ell_1$ such that
    \begin{equation*}
        \eta_{\beta}=\left\{
        \begin{array}{ll}
            1, & \beta=\alpha \\
            0, & \beta\neq\alpha
        \end{array}
        \right. ,
    \end{equation*}
    Then it is clear that $(\eta\cdot f)(o)\neq 0$ and $\nu_w(\eta)=\nu_w(f)$. Thus it follows from Theorem \ref{thm-nu(xi)} that
    \begin{equation*}
        \frac{\nu_w(\varphi)}{\nu_w(f)+A(\nu_w)}\cdot\gamma_{\eta}(2\varphi)\leq 1.
    \end{equation*}
    According to Corollary \ref{cor-gln-jn}, we know $c_o^f(2\varphi)\leq \gamma_{\eta}(2\varphi)$. Consequently we obtain
    \begin{equation*}
        \frac{\nu_w(\varphi)}{\nu_w(f)+A(\nu_w)}\cdot c_o^f(2\varphi)\leq 1.
    \end{equation*}
    \end{proof}

    \section{Restriction formula and subadditivity property}\label{restriction-section}
    In this section, we generalize the restriction formula and subadditivity property for complex singular exponents  (see \cite{GZrestriction, DEL, DK}) to $\xi-$cse case.
    
    Let $\Delta^n$ be the unit polydisc in $\mathbb{C}^n$ with coordinate $(z_1,\ldots,z_n)$, and the origin $o=(0,\ldots,0)\in\Delta^n$. Let $H:=\{z_{k+1}=\cdots=z_n=0\}$, where $k$ is an integer such that $1\leq k<n$. Let $o'$ be the origin in $H$. In this section, we denote $(\mathcal{O}_{n,o})^{\text{dual}}$ by $\ell_1^{(n)}$, and $(\mathcal{O}_{k,o'})^{\text{dual}}$ by $\ell_1^{(k)}$. For any $\xi=(\xi_{\alpha})_{\alpha\in\mathbb{N}^k}\in\ell_1^{(k)}$, we denote that $P^*\xi=((P^*\xi)_{\beta})_{\beta\in\mathbb{N}^n}\in\ell_1^{(n)}$ such that
    \begin{equation*}
        (P^*\xi)_{\beta}=\left\{
        \begin{array}{ll}
            \xi_{\alpha}, & \beta=(\alpha_1,\ldots,\alpha_k,0,\ldots,0), \ \text{where\ } \alpha=(\alpha_1,\ldots,\alpha_k) \\
            0, & \exists j\in\{k+1,\ldots,n\}, \ \text{s.t.\ } \beta_{j}\neq 0
        \end{array}
        \right.,
    \end{equation*}
    where $P$ is the projection from $\mathbb{C}^n$ to $H$.
    
    We state the following result restriction formula for $\xi-$cse.
    \begin{Theorem}\label{thm-restriction}
        Let $\varphi$ be a negative plurisubharmonic function on $\Delta^n$ such that $\varphi(o)=-\infty$, $\psi$ any plurisubharmonic function on $\Delta^n$. Let $\xi\in \ell_1^{(k)}$. Then we have
        \[\gamma_{\xi}(\varphi|_H,\psi|_H)\leq\gamma_{P^*\xi}(\varphi,\psi).\]
    \end{Theorem}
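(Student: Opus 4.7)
The plan is to reduce the inequality to a comparison of $\xi$-Bergman kernels at each fixed level $t$, and obtain that comparison via an Ohsawa--Takegoshi extension from $H\cap \Delta^n$ to $\Delta^n$. First I would dispose of the degenerate case $\psi|_H\equiv -\infty$: then $A^2(\{\varphi|_H<-t\}\cap H, e^{-\psi|_H})=\{0\}$, so $K^{\psi|_H}_{\xi,\varphi|_H}(t)=0$ and the left-hand side is $-\infty$, making the inequality trivial. So I may assume $\psi|_H\not\equiv -\infty$.

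Fix $t\geq 0$ and any $f\in A^2(\{\varphi|_H<-t\}\cap H, e^{-\psi|_H})$. The set $\Omega_t:=\{\varphi<-t\}\cap\Delta^n$ is a pseudoconvex open subset of $\Delta^n$ (since $\varphi$ is plurisubharmonic), and $\Omega_t\cap H=\{\varphi|_H<-t\}\cap H$. I would apply the Ohsawa--Takegoshi $L^2$ extension theorem on $\Omega_t$ with weight $\psi$ to obtain a holomorphic function $F$ on $\Omega_t$ with $F|_{\Omega_t\cap H}=f$ and
\[\int_{\Omega_t}|F|^2 e^{-\psi}\leq C\int_{\Omega_t\cap H}|f|^2 e^{-\psi|_H},\]
where $C$ depends only on the diameter of $\Delta^n$ in the normal directions $z_{k+1},\ldots,z_n$ and so is independent of $t$ and of $f$.

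Next I would identify the action of the functionals. Writing $F(z)=\sum_{\beta\in\mathbb{N}^n}c_\beta z^\beta$ near $o$ and $f(z_1,\ldots,z_k)=\sum_{\alpha\in\mathbb{N}^k}a_\alpha z_1^{\alpha_1}\cdots z_k^{\alpha_k}$ near $o'$, the restriction identity $F|_H=f$ forces $c_{(\alpha_1,\ldots,\alpha_k,0,\ldots,0)}=a_\alpha$ for every $\alpha\in\mathbb{N}^k$; combined with the definition of $P^*\xi$ (which is supported on multi-indices with $\beta_{k+1}=\cdots=\beta_n=0$), this gives $(P^*\xi\cdot F)(o)=(\xi\cdot f)(o')$. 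Using $F$ as a candidate in the supremum defining $K^\psi_{P^*\xi,\varphi}(t)$ then yields
\[K^\psi_{P^*\xi,\varphi}(t)\;\geq\;\frac{|(P^*\xi\cdot F)(o)|^2}{\int_{\Omega_t}|F|^2 e^{-\psi}}\;\geq\;\frac{1}{C}\cdot\frac{|(\xi\cdot f)(o')|^2}{\int_{\Omega_t\cap H}|f|^2 e^{-\psi|_H}}.\]
Taking the supremum over $f\in A^2(\Omega_t\cap H,e^{-\psi|_H})$ produces $K^\psi_{P^*\xi,\varphi}(t)\geq C^{-1}K^{\psi|_H}_{\xi,\varphi|_H}(t)$. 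Dividing by $t$, taking $\log$, and letting $t\to +\infty$ swallows the multiplicative constant $C^{-1}$ and gives $\gamma_{P^*\xi}(\varphi,\psi)\geq \gamma_\xi(\varphi|_H,\psi|_H)$, as desired.

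The only genuinely technical step is the Ohsawa--Takegoshi extension: one must ensure the constant $C$ is uniform in $t$. This is automatic because $\Omega_t\subset\Delta^n$ for every $t$, so the codimension-$(n-k)$ Ohsawa--Takegoshi theorem (or its iterated codimension-one version) applied with the fixed ambient polydisc yields a universal constant. Everything else is a clean bookkeeping argument: the identification $(P^*\xi\cdot F)(o)=(\xi\cdot f)(o')$ is immediate from the definition of $P^*\xi$, and the passage to the limit uses only that $\log$ preserves inequalities and the additive $\log C$ term is killed upon dividing by $t$.
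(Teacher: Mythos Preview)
Your proof is correct and follows essentially the same route as the paper: both arguments use the Ohsawa--Takegoshi extension theorem on the sublevel sets $\{\varphi<-t\}$ to obtain the uniform inequality $K^{\psi}_{P^*\xi,\varphi}(t)\geq C^{-1}K^{\psi|_H}_{\xi,\varphi|_H}(t)$, and then pass to the limit. The only cosmetic difference is that the paper first invokes the existence of an extremal function $f_t$ realizing $K^{\psi|_H}_{\xi,\varphi|_H}(t)$ (their Lemma~\ref{lem-F_0}) and extends that single function, whereas you extend an arbitrary $f$ and then take the supremum; your version is marginally more self-contained since it avoids that lemma and automatically covers the case $K^{\psi|_H}_{\xi,\varphi|_H}(t)=0$.
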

    
    We give the following result for preparation, where the partial result was proved in \cite{BG2}.
    \begin{Lemma}[see \cite{BG2}]\label{lem-F_0}
        Let $D$ be a domain in $\mathbb{C}^n$, $\psi$ a plurisubharmonic function on $D$. Let $z\in D$. Then for any $\xi\in \ell_1$, if $K^{\psi}_{\xi,D}(z)>0$, there exists a unique holomorphic function $F_0$ on $D$ such that $(\xi\cdot F_0)(z)=1$ and
        \[K^{\psi}_{\xi,D}(z)=\frac{1}{\int_D |F_0|^2e^{-\psi}}.\]
    \end{Lemma}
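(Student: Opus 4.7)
The plan is to view the map $L(f):=(\xi\cdot f)(z)$ as a bounded linear functional on the Hilbert space $A^2(D,e^{-\psi})$ and then apply the Riesz representation theorem. The operator norm of $L$ will be exactly $\sqrt{K^{\psi}_{\xi,D}(z)}$, so the Riesz representer will provide (up to a normalization) the desired extremal function $F_0$.

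First I would recall that $A^{2}(D,e^{-\psi})$ is a Hilbert space with inner product $\langle f,g\rangle:=\int_{D}f\bar{g}e^{-\psi}$ (completeness follows from the standard mean-value argument showing that $L^{2}$-convergence of holomorphic functions implies locally uniform convergence, hence preserves holomorphy). Next, the very definition of $K^{\psi}_{\xi,D}(z)$ gives
\[|L(f)|^{2}=|(\xi\cdot f)(z)|^{2}\leq K^{\psi}_{\xi,D}(z)\cdot\int_{D}|f|^{2}e^{-\psi},\]
so $L$ is bounded with $\|L\|\leq\sqrt{K^{\psi}_{\xi,D}(z)}$, and by taking the supremum over normalized $f$ one sees this inequality is an equality. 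Since $K^{\psi}_{\xi,D}(z)>0$ the functional $L$ is nontrivial. By the Riesz representation theorem there is a unique $G\in A^{2}(D,e^{-\psi})$ with $L(f)=\langle f,G\rangle$ for every $f$ and with $\|G\|^{2}=\|L\|^{2}=K^{\psi}_{\xi,D}(z)$. In particular $(\xi\cdot G)(z)=L(G)=\|G\|^{2}=K^{\psi}_{\xi,D}(z)\neq 0$.

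Then I would set $F_{0}:=G/K^{\psi}_{\xi,D}(z)$. A direct computation gives $(\xi\cdot F_{0})(z)=1$ and
\[\int_{D}|F_{0}|^{2}e^{-\psi}=\frac{\|G\|^{2}}{K^{\psi}_{\xi,D}(z)^{2}}=\frac{1}{K^{\psi}_{\xi,D}(z)},\]
which is the required identity. For uniqueness, suppose $F_{1}$ also satisfies both conditions. Then $h:=F_{1}-F_{0}$ lies in the kernel of $L$, which is precisely $G^{\perp}$, i.e.\ $\langle h,F_{0}\rangle=0$. By the Pythagorean identity $\|F_{1}\|^{2}=\|F_{0}\|^{2}+\|h\|^{2}$. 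But both $F_{0}$ and $F_{1}$ achieve the infimum $1/K^{\psi}_{\xi,D}(z)$ of $\|\cdot\|^{2}$ subject to the constraint $(\xi\cdot \,\cdot\,)(z)=1$, forcing $\|h\|=0$ and hence $F_{1}=F_{0}$.

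There is no serious obstacle here; the argument is essentially a Riesz–representation plus Hilbert-space projection exercise. The only point that requires minor care is the tacit verification that $A^{2}(D,e^{-\psi})$ is complete and that the evaluation-type functional $L$ is continuous, but both follow from the defining sup-inequality for $K^{\psi}_{\xi,D}(z)$ together with the hypothesis $K^{\psi}_{\xi,D}(z)>0$, which ensures $L\not\equiv 0$.
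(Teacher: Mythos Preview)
Your proposal is correct. The paper itself only proves uniqueness here, citing \cite{BG2} for the existence of $F_0$; its uniqueness argument is the parallelogram-law computation
\[
\int_D\Bigl|\tfrac{F_0+\tilde F_0}{2}\Bigr|^2e^{-\psi}=\tfrac{1}{2}\Bigl(\int_D|F_0|^2e^{-\psi}+\int_D|\tilde F_0|^2e^{-\psi}\Bigr)-\int_D\Bigl|\tfrac{F_0-\tilde F_0}{2}\Bigr|^2e^{-\psi},
\]
forcing the last term to vanish. Your Riesz-representation argument is a genuinely different (and in fact more self-contained) route: it produces existence and uniqueness simultaneously by identifying $F_0$ with the normalized Riesz representer of $L$, and your uniqueness step via the orthogonal decomposition $F_1=F_0+h$ with $h\perp F_0$ is Pythagoras rather than the parallelogram law---though of course both encode the same strict convexity of the Hilbert norm. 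One small point you flagged but did not fully spell out: boundedness of $L$ (equivalently $K^{\psi}_{\xi,D}(z)<+\infty$) is not part of the hypothesis and needs the Cauchy estimates together with the $\ell_1$ condition on $\xi$; the paper handles this by citing \cite{BG2} as well.
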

    
    \begin{proof}
        The proof of the existence of $F_0$ can be referred to \cite{BG2}. Now we give the proof of the uniqueness. Suppose $F_0$ and $\tilde{F}_0$ are two holomorphic functions on $D$ such that $(\xi\cdot F_0)(z)=1$ and
        \[K^{\psi}_{\xi,D}(z)=\frac{1}{\int_D |F_0|^2e^{-\psi}}=\frac{1}{\int_D |\tilde{F}_0|^2e^{-\psi}}.\]
        Let $F:=\frac{F_0+\tilde{F}_0}{2}$. Then we have $(\xi\cdot F)(z)=(\xi\cdot\tilde{F})(z)=1$, and
        \[\left(K^{\psi}_{\xi,D}(z)\right)^{-1}\leq \int_D |F|^2e^{-\psi}=\frac{\int_D |F_0|^2e^{-\psi}+\int_D |\tilde{F}_0|^2e^{-\psi}}{2}-\int_D \left|\frac{F_0-\tilde{F}_0}{2}\right|^2e^{-\psi},\]
    which implies that
    \[\int_D\left|\frac{F_0-\tilde{F}_0}{2}\right|^2e^{-\psi}=0.\]
    We obtain $F_0=\tilde{F}_0$, which shows the uniqueness of $F_0$.
    \end{proof}
    
    For the proof of Theorem \ref{thm-restriction}, we recall Ohsawa-Takegoshi's $L^2$ extension theorem.
    \begin{Theorem}[see \cite{OT1}]\label{OT-L2ext}
    Let $D$ be a bounded pseudoconvex domain in $\mathbb{C}^n$, and $H:=\{z_{k+1}=\cdots=z_n=0\}$. Let $\psi$ be a plurisubharmonic function on $\Delta^n$. Then for any holomorphic function $f$ on $H\cap D$ satisfying
    \[\int_{H\cap D}|f|^2e^{-\psi|_H}<+\infty,\]
    there exists a holomorphic function $F$ on $D$ satisfying $F|_{H\cap D}=f$, and
    \[\int_{D}|F|^2e^{-\psi}\leq C\int_{H\cap D}|f|^2e^{-\psi|_H},\]
    where $C$ is a constant that only depends on the diameter of $D$.
    \end{Theorem}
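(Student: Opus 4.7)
The plan is to follow the classical twisted $\bar\partial$ approach of Ohsawa--Takegoshi. First I would reduce to the regular setting: exhaust $D$ by smoothly bounded strongly pseudoconvex subdomains and approximate $\psi$ from above by a decreasing sequence of smooth plurisubharmonic functions, placing the problem in a setting where $D$ is smooth and strongly pseudoconvex, $\psi$ is smooth up to $\overline{D}$, and $f$ extends holomorphically to a neighborhood of $\overline{H\cap D}$. The extension inequality is preserved under monotone/weak limits, so it suffices to prove it in this regularized situation and take the limit at the end.

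Write $z=(z',z'')$ with $z'\in\mathbb{C}^k$ and $z''\in\mathbb{C}^{n-k}$, so that $H=\{z''=0\}$. I would choose a smooth cutoff $\chi=\chi(|z''|^2)$ equal to $1$ near $H$ and supported in a thin tube around $H$, and form the preliminary smooth extension $\tilde f(z',z''):=\chi(|z''|^2)\,f(z')$. Then $\tilde f|_{H\cap D}=f$ and $\bar\partial\tilde f$ is supported away from $H$. The next step is to solve $\bar\partial u=\bar\partial\tilde f$ on $D$ with an $L^2$ estimate against a weight of the form
\[
\Psi_\varepsilon:=\psi+(n-k)\log\bigl(|z''|^2+\varepsilon\bigr),
\]
where the logarithmic singularity is calibrated so that any $L^2$ solution $u$ with respect to $e^{-\Psi_\varepsilon}$ is forced to vanish on $H$ in the limit $\varepsilon\to 0$. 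Setting $F_\varepsilon:=\tilde f-u_\varepsilon$ then produces a holomorphic extension of $f$ at each $\varepsilon>0$.

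The heart of the argument, and the main obstacle, is to obtain the $\bar\partial$ estimate with a constant depending only on the diameter of $D$. I would apply a twisted Bochner--Kodaira--Nakano identity with a pair of positive functions $(\eta,\gamma)$ built out of $-\log(|z''|^2+\varepsilon)$, chosen so that the twisted curvature form
\[
\eta\, i\partial\bar\partial\Psi_\varepsilon-i\partial\bar\partial\eta-\gamma^{-1}\, i\partial\eta\wedge\bar\partial\eta
\]
is nonnegative after the negative contribution of $(n-k)\,i\partial\bar\partial\log(|z''|^2+\varepsilon)$ is absorbed by the $-i\partial\bar\partial\eta$ term and the cross term $\gamma^{-1}|\partial\eta|^2$ is reabsorbed into the $\gamma^{-1}\|\bar\partial^*v\|^2$ piece of the twisted identity. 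The resulting solvability estimate reads roughly $\int|u_\varepsilon|^2\eta^{-1}e^{-\Psi_\varepsilon}\le\int|\bar\partial\tilde f|^2(\eta+\gamma)e^{-\Psi_\varepsilon}$, and since the right-hand side is concentrated on $\operatorname{supp}(d\chi)$, letting $\varepsilon\to 0$ produces a Dirac-type concentration along $\{z''=0\}$ that identifies the right-hand side with $C\int_{H\cap D}|f|^2e^{-\psi|_H}$ with an explicit constant controlled by the diameter.

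Finally, extracting a weak $L^2(D,e^{-\psi})$ limit $F$ of $\{F_\varepsilon\}$ as $\varepsilon\to 0$ yields the desired holomorphic extension, since the uniform $L^2$ bound together with the Bergman/mean-value estimate forces local uniform convergence to a holomorphic function, and the singular weight $(n-k)\log|z''|^2$ in $\Psi_\varepsilon$ forces $(F-f)|_{H\cap D}=0$. The delicate step is the explicit choice of $(\eta,\gamma)$ so that the curvature term is nonnegative with the correct coefficients; an alternative route that avoids the twisted identity is to invoke the Guan--Zhou / B\l ocki / Chen concavity-of-minimal-$L^2$-integrals machinery, which reduces the problem to a one-variable ODE optimization and gives the sharp constant directly.
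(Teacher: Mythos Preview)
The paper does not give its own proof of this statement: Theorem~\ref{OT-L2ext} is simply quoted from \cite{OT1} as a black box and then applied in the proof of Theorem~\ref{thm-restriction}. So there is no argument in the paper to compare your proposal against.

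That said, your outline is the standard Ohsawa--Takegoshi proof: regularize, cut off to get a rough extension, solve $\bar\partial$ with a twisted Bochner--Kodaira identity using a weight with a $(n-k)\log(|z''|^2+\varepsilon)$ singularity, and pass to the limit. The one place where your sketch is slightly loose is the claim that the constant depends only on the diameter of $D$: in the usual argument this comes in because the twist functions $\eta,\gamma$ are built from $-\log(|z''|^2+\varepsilon)$ together with a bound on $|z''|$, and the diameter of $D$ provides that bound. You should make this explicit when you actually carry out the estimate, since otherwise the constant could a priori depend on more detailed geometry of $D$. Apart from that, the plan is correct and is exactly the route taken in the original reference the paper cites.
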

    
    The optimal version of Theorem \ref{OT-L2ext} (see \cite{Bl13}) was used in \cite{BG2} to prove Proposition \ref{xi-log-psh}.
    
    \begin{proof}[Proof of Theorem \ref{thm-restriction}]
        According to Lemma \ref{lem-F_0}, for any $t\in [0,+\infty)$, let $f_t$ be the holomorphic function on $\{\varphi|_H<-t\}$ such that $(\xi\cdot f_t)(o')=1$ and
        \[K_{\xi,\varphi|_H}^{\psi|_H}(t)=\frac{1}{\int_{\{\varphi|_H<-t\}}|f_t|^2e^{-\psi|_H}}.\]
        Then by Theorem \ref{OT-L2ext}, there exists $F_t\in\{\varphi<-t\}$ ($D:=\{\varphi<-t\}$ in Theorem \ref{OT-L2ext}, where $\{\varphi<-t\}$ is pseudoconvex) such that $F_t|_{\{\varphi|_H<-t\}}=f_t$, and
        \[\int_{\{\varphi<-t\}}|F_t|^2e^{-\psi}\leq C\int_{\{\varphi|_H<-t\}}|f_t|^2e^{-\psi|_H}.\]
        Note that for $P^*\xi\in\ell^{(n)}_1$, $F_t|_{\{\varphi<-t\}\cap H}=f_t$ implies that $(P^*\xi\cdot F_t)(o)=(\xi\cdot f_t)(o')=1$. Then we have
        \[K_{P^*\xi,\varphi}^{\psi}(t)\geq\frac{|(P^*\xi\cdot F_t)(o)|^2}{\int_{\{\varphi<-t\}}|F_t|^2e^{-\psi}}\geq \frac{1}{C}K_{\xi,\varphi|_H}^{\psi|_H}(t),\]
        where $C$ is independent of $t$, $f_t$, $\varphi$ and $\psi$. It follows that
        \[\gamma_{P^*\xi}(\varphi,\psi)=\lim_{t\to +\infty}\frac{\log K_{P^*\xi,\varphi}^{\psi}(t)}{t}\geq\lim_{t\to +\infty}\frac{\log \left(\frac{1}{C}K_{\xi,\varphi|_H}^{\psi|_H}(t)\right)}{t}=\gamma_{\xi}(\varphi|_H,\psi|_H).\]
    \end{proof}
    
    Let $\xi=(1,0,\ldots,0,\ldots)$ and $\psi\equiv 0$, then by Example \ref{ex-xi0}, Theorem \ref{thm-restriction} degenerates to the restriction formula for complex singularity exponents:
    \begin{Corollary}[see \cite{DEL}]
        $c_{o'}(\varphi|_H)\leq c_o(\varphi)$, where $\varphi|_H\not\equiv -\infty$.
    \end{Corollary}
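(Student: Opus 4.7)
The plan is to specialize Theorem \ref{thm-restriction} to $\psi\equiv 0$ and to the functional $\xi_0:=(1,0,\ldots,0,\ldots)$, and then to read off the $\xi_0-$cse appearing on each side as the ordinary complex singularity exponent via Example \ref{ex-xi0}.

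First I would dispose of two trivial cases. If $\varphi(o)>-\infty$, then $e^{-c\varphi}$ is locally bounded near $o$ for every $c>0$, so $c_o(\varphi)=+\infty$ and the inequality is automatic; and if $c_{o'}(\varphi|_H)=0$ the inequality is clear since $c_o(\varphi)\geq 0$. So I may assume $\varphi(o)=-\infty$, hence $\varphi|_H(o')=-\infty$ (because $o'=o\in H$), and $c_{o'}(\varphi|_H)>0$.

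The key observation is that, directly from the definition of the lift $P^*$ given in Section \ref{restriction-section}, one has $P^*\xi_0=\xi_0$: the only non-vanishing entry of $\xi_0\in\ell_1^{(k)}$ sits at the multi-index $0\in\mathbb{N}^k$, which lifts to the multi-index $0\in\mathbb{N}^n$, and all other entries of $P^*\xi_0$ vanish by definition. Feeding $\xi=\xi_0$ and $\psi\equiv 0$ into Theorem \ref{thm-restriction} then yields
\[\gamma_{\xi_0}(\varphi|_H)\leq\gamma_{P^*\xi_0}(\varphi)=\gamma_{\xi_0}(\varphi).\]

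It remains to recognise both sides. For the left-hand side, Example \ref{ex-xi0} applied to the plurisubharmonic function $\varphi|_H$ on a polydisc in $H\cong\mathbb{C}^k$ (which is legitimate because $c_{o'}(\varphi|_H)>0$) gives $\gamma_{\xi_0}(\varphi|_H)=c_{o'}(\varphi|_H)$. For the right-hand side, it suffices to use the easy half of Theorem \ref{Thm-xicse-local}: for every $c>c_o(\varphi)$ the constant germ $(1,o)$ fails to lie in $\mathcal{I}(c\varphi)_o$, so this ideal is proper, $\xi_0\in\ell_{\mathcal{I}(c\varphi)_o}$, and hence $\gamma_{\xi_0}(\varphi)\leq c$; letting $c\downarrow c_o(\varphi)$ gives $\gamma_{\xi_0}(\varphi)\leq c_o(\varphi)$. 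Chaining these three facts produces $c_{o'}(\varphi|_H)\leq c_o(\varphi)$. There is no real obstacle here, since the analytic content -- the Ohsawa--Takegoshi extension -- has already been absorbed into the proof of Theorem \ref{thm-restriction}; the Corollary is a formal specialisation made possible by the identification $P^*\xi_0=\xi_0$.
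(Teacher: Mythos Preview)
Your proof is correct and follows exactly the paper's approach: specialize Theorem \ref{thm-restriction} to $\xi_0=(1,0,\ldots)$ with $\psi\equiv 0$, observe $P^*\xi_0=\xi_0$, and identify $\gamma_{\xi_0}$ with the ordinary complex singularity exponent via Example \ref{ex-xi0} (your use of Theorem \ref{Thm-xicse-local} for the upper bound on the right-hand side is just a slight unpacking of that example). One small correction: the assertion that $\varphi(o)>-\infty$ forces $e^{-c\varphi}$ to be locally \emph{bounded} is false for general plurisubharmonic $\varphi$ (upper semicontinuity only bounds $\varphi$ from above), but this case is already excluded by the standing hypothesis $\varphi(o)=-\infty$ of Theorem \ref{thm-restriction}, so the slip is harmless here.
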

    
    Let $o_1\in \Delta^{n_1}$, $o_2\in \Delta^{n_2}$, and let $\pi_i: \Delta^{n_1}\times \Delta^{n_2}\to \Delta^{n_i}$ be projections for $i\in\{1,2\}$. We obtain the following subadditivity property for $\xi-$cse.
    
    \begin{Theorem}\label{thm-subadd}
        Let $\xi_1\in\ell^{n_1}$ and $\xi_2\in\ell^{n_2}$. Let $\varphi_1$, $\varphi_2$ be negative plurisubharmonic functions on $\Delta^{n_1}$ and $\Delta^{n_2}$ such that $\varphi_1(o_1)=\varphi_2(o_2)=-\infty$ respectively. Let $\psi_1$, $\psi_2$ be plurisubharmonic functions on $\Delta^{n_1}$ and $\Delta^{n_2}$ respectively. Then
        \begin{equation}\label{ineq-subadd}
            \gamma_{\xi_1\times\xi_2}\big(\max\{\varphi_1\circ \pi_1,\varphi_2\circ \pi_2\}, \psi_1\circ\pi_1+\psi_2\circ \pi_2\big)=\gamma_{\xi_1}(\varphi_1,\psi_1)+\gamma_{\xi_2}(\varphi_2,\psi_2),
        \end{equation}
        where $\xi_1\times\xi_2\in\ell^{(n_1+n_2)}_1$ such that
        \[(\xi_1\times\xi_2)_{(\alpha,\beta)}=(\xi_1)_{\alpha}\cdot(\xi_2)_{\beta}, \] 
        for any $(\alpha,\beta)\in\mathbb{N}^{n_1+n_2}$ with $ \alpha\in\mathbb{N}^{n_1}, \beta\in\mathbb{N}^{n_2}$.
    \end{Theorem}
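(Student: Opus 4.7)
The plan is to reduce Theorem \ref{thm-subadd} to a multiplicative factorization of $\xi$-Bergman kernels on a product of domains with an additive weight. The key observation is that for every $t\ge 0$,
\[
\{\max\{\varphi_1\circ\pi_1,\varphi_2\circ\pi_2\}<-t\}=\{\varphi_1<-t\}\times\{\varphi_2<-t\},
\]
and the weight $\psi_1\circ\pi_1+\psi_2\circ\pi_2$ already splits across the product. Once the identity
\[
K_{\xi_1\times\xi_2,\,D_1\times D_2}^{\psi_1+\psi_2}(o_1,o_2)=K_{\xi_1,D_1}^{\psi_1}(o_1)\cdot K_{\xi_2,D_2}^{\psi_2}(o_2)
\]
is established for arbitrary bounded pseudoconvex $D_i\subset\mathbb{C}^{n_i}$ and plurisubharmonic $\psi_i$, specializing to $D_i=\{\varphi_i<-t\}$, taking logarithms, dividing by $t$, and letting $t\to+\infty$ yields \eqref{ineq-subadd} directly.

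For the inequality ``$\ge$'', assume first $K_i:=K_{\xi_i,D_i}^{\psi_i}(o_i)\in(0,+\infty)$ and invoke Lemma \ref{lem-F_0} to produce extremal $f_i\in A^2(D_i,e^{-\psi_i})$ with $(\xi_i\cdot f_i)(o_i)=1$ and $\|f_i\|^2=1/K_i$. A direct Taylor-coefficient computation gives $((\xi_1\times\xi_2)\cdot f_1\otimes f_2)(o_1,o_2)=(\xi_1\cdot f_1)(o_1)(\xi_2\cdot f_2)(o_2)=1$, while Fubini yields $\|f_1\otimes f_2\|^2=1/(K_1K_2)$; plugging this test function into the definition of $K_{\xi_1\times\xi_2}$ gives the desired lower bound. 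For the reverse inequality, fix orthonormal bases $\{e_j^{(1)}\}$ and $\{e_k^{(2)}\}$ of the two factor Hilbert spaces; then $\{e_j^{(1)}\otimes e_k^{(2)}\}_{j,k}$ is an orthonormal basis of $A^2(D_1\times D_2,e^{-\psi_1-\psi_2})$ by Fubini. Expanding $F=\sum c_{jk}\,e_j^{(1)}\otimes e_k^{(2)}$ and applying Cauchy--Schwarz gives
\[
|((\xi_1\times\xi_2)\cdot F)(o_1,o_2)|^2\le \|F\|^2\cdot\Bigl(\sum_j|(\xi_1\cdot e_j^{(1)})(o_1)|^2\Bigr)\Bigl(\sum_k|(\xi_2\cdot e_k^{(2)})(o_2)|^2\Bigr),
\]
and by Parseval applied to the Riesz representatives of the bounded functionals $f\mapsto(\xi_i\cdot f)(o_i)$, each inner sum equals $K_i$, yielding the upper bound.

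The main technical hurdle will be the Parseval identity $K_{\xi_i,D_i}^{\psi_i}(o_i)=\sum_j|(\xi_i\cdot e_j^{(i)})(o_i)|^2$; this rests on the fact that the $\xi_i$-evaluation is a bounded linear functional on $A^2(D_i,e^{-\psi_i})$, which in turn follows from the sub-mean-value estimate for holomorphic functions (converting $A^2$-convergence into local uniform convergence of Taylor data) combined with the defining summability of $\xi_i\in\ell_1$, as already implicit in \cite{BG1,BG2}. The degenerate cases are dispatched directly from the factorization: if some $K_{\xi_i,\varphi_i}^{\psi_i}(t)\equiv 0$, the tensor-basis expansion forces the product kernel to vanish as well, so both sides of \eqref{ineq-subadd} equal $-\infty$ with the paper's conventions, while if some $\gamma_{\xi_i}=+\infty$ the multiplicative identity transfers the divergence to the product kernel and the equality persists in the limit.
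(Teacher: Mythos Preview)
Your proof is correct and follows the same overall strategy as the paper: reduce \eqref{ineq-subadd} to the multiplicative identity
\[
K_{\xi_1\times\xi_2,\,\{\varphi_1<-t\}\times\{\varphi_2<-t\}}^{\psi_1+\psi_2}(o_1,o_2)=K_{\xi_1,\varphi_1}^{\psi_1}(t)\cdot K_{\xi_2,\varphi_2}^{\psi_2}(t),
\]
then take logarithms and pass to the limit. The lower bound is proved exactly as in the paper, by testing with the tensor product of the two extremal functions furnished by Lemma~\ref{lem-F_0}.

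Where you diverge is in the upper bound. The paper takes the extremal $F$ on the product domain (again via Lemma~\ref{lem-F_0}), applies Fubini, bounds the inner integral over $\{\varphi_2<-t\}$ from below by $|(\xi_2\cdot F_z)(o_2)|^2/K_{\xi_2,\varphi_2}^{\psi_2}(t)$, observes (citing \cite{BG1,BG2}) that $G(z):=(\xi_2\cdot F_z)(o_2)$ is holomorphic, and then bounds the remaining integral over $\{\varphi_1<-t\}$ by $|(\xi_1\cdot G)(o_1)|^2/K_{\xi_1,\varphi_1}^{\psi_1}(t)$, checking finally that $(\xi_1\cdot G)(o_1)=((\xi_1\times\xi_2)\cdot F)(o)=1$. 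Your route via a tensor orthonormal basis and Parseval for the Riesz representative is a legitimate alternative; it trades the holomorphicity of the slice functional $z\mapsto(\xi_2\cdot F_z)(o_2)$ for the (standard) fact that $A^2(D_1\times D_2,e^{-\psi_1-\psi_2})$ is the Hilbert tensor product of the factor spaces. Both hinge on the same underlying point, namely that $f\mapsto(\xi_i\cdot f)(o_i)$ is a bounded functional on $A^2(D_i,e^{-\psi_i})$ (equivalently $K_{\xi_i,D_i}^{\psi_i}(o_i)<+\infty$, cf.\ \cite{BG2}), so neither argument is materially simpler. One small caveat: the paper, like you, does not spell out the convention for $-\infty+(+\infty)$ in \eqref{ineq-subadd}; your handling of the degenerate case ($K_{\xi_i}(t)\equiv 0\Rightarrow$ product kernel $\equiv 0\Rightarrow$ LHS $=-\infty$) is correct once one reads the identity with the convention $-\infty+c=-\infty$ for $c\in[0,+\infty]$.
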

    
    \begin{proof}
        This theorem is a direct result of the following generalization of the product property of Bergman kernels (e.g. \cite{DNWZ, Oh}):
        \begin{equation*}
            K_{\xi_1\times\xi_2,\max\{\varphi_1\circ \pi_1,\varphi_2\circ \pi_2\}}^{\psi_1\circ\pi_1+\psi_2\circ \pi_2}(t)=K_{\xi_1,\varphi_1}^{\psi_1}(t)\cdot K_{\xi_2,\varphi_2}^{\psi_2}(t)
        \end{equation*}
        holds for any $t\in[0,+\infty)$.

    We firstly prove 
    \begin{equation*}
        K_{\xi_1\times\xi_2,\max\{\varphi_1\circ \pi_1,\varphi_2\circ \pi_2\}}^{\psi_1\circ\pi_1+\psi_2\circ \pi_2}(t)\geq K_{\xi_1,\varphi_1}^{\psi_1}(t)\cdot K_{\xi_2,\varphi_2}^{\psi_2}(t).
    \end{equation*}
    We may assume $K_{\xi_1,\varphi_1}^{\psi_1}(t)>-\infty$ and $K_{\xi_2,\varphi_2}^{\psi_2}(t)>-\infty$. For any $t\in [0,+\infty)$, according to Lemma \ref{lem-F_0}, there exists holomorphic functions $F_{1,t}$ and $F_{2,t}$ on $\{\varphi_1<-t\}$ and $\{\varphi_2<-t\}$ such that $(\xi_1\cdot F_{1,t})(o_1)=(\xi_2\cdot F_{2,t})(o_2)=1$, and
    \[K_{\xi_1,\varphi_1}^{\psi_1}(t)=\frac{1}{\int_{\{\varphi_1<-t\}}|F_{1,t}|^2e^{-\psi_1}}, \ K_{\xi_2,\varphi_2}^{\psi_2}(t)=\frac{1}{\int_{\{\varphi_1<-t\}}|F_{2,t}|^2e^{-\psi_2}}.\]
    Let $F_t:=\pi_1^*(F_{1,t})\cdot\pi_2^*(F_{2,t})$. Note that
    \[\big\{\max\{\varphi_1\circ \pi_1,\varphi_2\circ \pi_2\}<-t\big\}=\{\varphi_1<-t\}\times\{\varphi_2<-t\}.\]
    Then $F_t$ is a holomorphic function on $\big\{\max\{\varphi_1\circ \pi_1,\varphi_2\circ \pi_2\}<-t\big\}$. In addition, we have
    \begin{equation*}
        \begin{split}
            &\int_{\{\varphi_1<-t\}\times\{\varphi_2<-t\}}|F_t|^2e^{-(\psi_1\circ\pi_1+\psi_2\circ \pi_2)}\\
            =&\int_{\{\varphi_1<-t\}}|F_{1,t}|^2e^{-\psi_1}\cdot\int_{\{\varphi_2<-t\}}|F_{2,t}|^2e^{-\psi_2}\\
            =&\left(K_{\xi_1,\varphi_1}^{\psi_1}(t)\cdot K_{\xi_2,\varphi_2}^{\psi_2}(t)\right)^{-1},
        \end{split}
    \end{equation*}
    and it can be checked that $\big((\xi_1\times\xi_2)\cdot F_t\big)(o)=1$ where $o=(o_1,o_2)$ is the origin in $\Delta^{n_1+n_2}$. More precisely, if we write
    \[F_{1,t}(z)=\sum_{\alpha\in\mathbb{N}^{n_1}}a_{\alpha}z^{\alpha}\]
    near $o_1$, and
    \[F_{2,t}(w)=\sum_{\beta\in\mathbb{N}^{n_2}}b_{\beta}w^{\beta}\]
    near $o_2$, then
    \[F_t(z,w)=\sum_{\alpha\in\mathbb{N}^{n_1},\beta\in\mathbb{N}^{n_2}}a_{\alpha}b_{\beta}z^{\alpha}w^{\beta}\]
    near $o$, which implies
    \[\big((\xi_1\times\xi_2)\cdot F_t\big)(o)=\sum_{(\alpha,\beta)\in\mathbb{N}^{n_1}\times\mathbb{N}^{n_2}}(\xi_1)_{\alpha}(\xi_2)_{\beta}a_{\alpha}b_{\beta}=\sum_{\alpha\in\mathbb{N}^{n_1}}(\xi_1)_{\alpha}a_{\alpha}\cdot\sum_{\beta\in\mathbb{N}^{n_2}}(\xi_2)_{\beta}b_{\beta}=1,\]
    where $\xi_1,\xi_2\in\ell_1$ ensures the changes of summation orders. Now we can obtain
    \begin{equation}
        \begin{split}
            K_{\xi_1\times\xi_2,\max\{\varphi_1\circ \pi_1,\varphi_2\circ \pi_2\}}^{\psi_1\circ\pi_1+\psi_2\circ \pi_2}(t)&\geq\frac{|\big((\xi_1\times\xi_2)\cdot F_t\big)(o)|^2}{\int_{\{\max\{\varphi_1\circ \pi_1,\varphi_2\circ \pi_2\}<-t\}}|F_t|^2e^{-(\psi_1\circ\pi_1+\psi_2\circ \pi_2)}}\\
            &=K_{\xi_1,\varphi_1}^{\psi_1}(t)\cdot K_{\xi_2,\varphi_2}^{\psi_2}(t).
        \end{split}
    \end{equation}

    Next we prove
    \begin{equation*}
        K_{\xi_1\times\xi_2,\max\{\varphi_1\circ \pi_1,\varphi_2\circ \pi_2\}}^{\psi_1\circ\pi_1+\psi_2\circ \pi_2}(t)\leq K_{\xi_1,\varphi_1}^{\psi_1}(t)\cdot K_{\xi_2,\varphi_2}^{\psi_2}(t).
    \end{equation*}
    We may assume $K_{\xi_1\times\xi_2,\max\{\varphi_1\circ \pi_1,\varphi_2\circ \pi_2\}}^{\psi_1\circ\pi_1+\psi_2\circ \pi_2}(t)>-\infty$. Similarly, by Lemma \ref{lem-F_0}, there exists unique $F\in A^2(\big\{\max\{\varphi_1\circ \pi_1,\varphi_2\circ \pi_2\}<-t\big\}, e^{-\left(\psi_1\circ\pi_1+\psi_2\circ \pi_2\right)})$ such that $\big((\xi_1\times\xi_2)\cdot F\big)(o)=1$ and
    \[\int_{\big\{\max\{\varphi_1\circ \pi_1,\varphi_2\circ \pi_2\}<-t\big\}}|F|^2e^{-\left(\psi_1\circ\pi_1+\psi_2\circ \pi_2\right)}=\left(K_{\xi_1\times\xi_2,\max\{\varphi_1\circ \pi_1,\varphi_2\circ \pi_2\}}^{\psi_1\circ\pi_1+\psi_2\circ \pi_2}(t)\right)^{-1}.\]
    Then following from Fubini's theorem we have
    \begin{equation*}
        \begin{split}
            &\int_{\big\{\max\{\varphi_1\circ \pi_1,\varphi_2\circ \pi_2\}<-t\big\}}|F|^2e^{-\left(\psi_1\circ\pi_1+\psi_2\circ \pi_2\right)}\\
            =&\int_{\{\varphi_1(z)<-t\}}\int_{\{\varphi_2(w)<-t\}}|F(z,w)|^2e^{-\psi_2(w)}\mathrm{d}\lambda(w)e^{-\psi_1(z)}\mathrm{d}\lambda(z)\\
            \geq& \int_{\{\varphi_1(z)<-t\}}|(\xi_2\cdot F_z)(o_2)|^2\left(K_{\xi_2,\varphi_2}^{\psi_2}(t)\right)^{-1}e^{-\psi_1(z)}\mathrm{d}\lambda(z),
        \end{split}
    \end{equation*}
    where $F_z:=F(z,\cdot)\in A^2(\{\varphi_2<-t\},e^{-\psi_2})$, and $\lambda(z), \lambda(w)$ are Lebesgue measures on $\mathbb{C}^{n_1}, \mathbb{C}^{n_2}$ respectively. In the proof of Theorem \ref{xi-log-psh} in \cite{BG1, BG2}, it is stated that $G(z):=(\xi_2\cdot F_z)(o_2)$ is holomorphic with respect to $z\in\{\varphi_1<-t\}$. It implies that
    \begin{equation*}
           \int_{\{\varphi_1(z)<-t\}}|(\xi_2\cdot F_z)(o_2)|^2e^{-\psi_1(z)}\mathrm{d}\lambda(z)\geq\left(K_{\xi_1,\varphi_1}^{\psi_1}(t)\right)^{-1}|(\xi_1\cdot G)(o_1)|^2.
    \end{equation*}
    Denote $F(z,w)=\sum_{(\alpha,\beta)\in\mathbb{N}^{n_1}\times\mathbb{N}^{n_2}}a_{\alpha,\beta}z^{\alpha}w^{\beta}$ near $o$, then direct calculation shows
    \begin{equation*}
        \begin{split}
            G(z)&=(\xi_2\cdot F_z)(o_2)=\sum_{\beta\in\mathbb{N}^{n_2}}(\xi_2)_{\beta}\sum_{\alpha\in\mathbb{N}^{n_1}}a_{\alpha,\beta}z^{\alpha}\\
            &=\sum_{\alpha\in\mathbb{N}^{n_1}}\left(\sum_{\beta\in\mathbb{N}^{n_2}}(\xi_2)_{\beta}a_{\alpha,\beta}\right)z^{\alpha},
        \end{split}
    \end{equation*}
    and
    \begin{equation*}
        \begin{split}
            (\xi_1\cdot G)(o_1)&=\sum_{\alpha\in\mathbb{N}^{n_1}}\left(\sum_{\beta\in\mathbb{N}^{n_2}}(\xi_2)_{\beta}a_{\alpha,\beta}\right)(\xi_1)_{\alpha}\\
            &=\sum_{(\alpha,\beta)\in\mathbb{N}^{n_1}\times\mathbb{N}^{n_1}}(\xi_1)_{\alpha}(\xi_2)_{\beta}a_{\alpha,\beta}=\big((\xi_1\times\xi_2)\cdot F\big)(o)=1.
        \end{split}
    \end{equation*}
    Then we can conclude that
    \begin{equation*}
        \begin{split}
        &K_{\xi_1\times\xi_2,\max\{\varphi_1\circ \pi_1,\varphi_2\circ \pi_2\}}^{\psi_1\circ\pi_1+\psi_2\circ \pi_2}(t)\\
        =&\left(\int_{\big\{\max\{\varphi_1\circ \pi_1,\varphi_2\circ \pi_2\}<-t\big\}}|F|^2e^{-\left(\psi_1\circ\pi_1+\psi_2\circ \pi_2\right)}\right)^{-1}\\
        \leq& K_{\xi_1,\varphi_1}^{\psi_1}(t)\cdot K_{\xi_2,\varphi_2}^{\psi_2}(t).
        \end{split}
    \end{equation*}

    Now it follows that
    \begin{equation*}
        \begin{split}
        &\gamma_{\xi_1\times\xi_2}(\max\{\varphi_1\circ \pi_1,\varphi_2\circ \pi_2\}, \psi_1\circ\pi_1+\psi_2\circ \pi_2)\\
        =&\lim_{t\to+\infty}\frac{\log K_{\xi_1\times\xi_2,\max\{\varphi_1\circ \pi_1,\varphi_2\circ \pi_2\}}^{\psi_1\circ\pi_1+\psi_2\circ \pi_2}(t)}{t}\\
        =&\lim_{t\to+\infty}\frac{\log K_{\xi_1,\varphi_1}^{\psi_1}(t)}{t}+\lim_{t\to+\infty}\frac{\log K_{\xi_2,\varphi_2}^{\psi_2}(t)}{t}\\
        =&\gamma_{\xi_1}(\varphi_1,\psi_1)+\gamma_{\xi_2}(\varphi_2,\psi_2).
        \end{split}
    \end{equation*}
\end{proof}
    
    Theorem \ref{thm-subadd} can also imply the following subadditivity property for complex singularity exponents with the help of Theorem \ref{thm-restriction} and Example \ref{ex-xi0} (taking $n_1=n_2=n$, $\psi_1=\psi_2=0$, $\xi_1=\xi_2=(1,0,\ldots,0,\ldots)$, and using Theorem \ref{thm-restriction} on the diagonal of $\Delta^n\times\Delta^n$):
    \begin{Corollary}[see \cite{DK}] 
        Let $u, v$ be plurisubharmonic functions on $\Delta^n$, then
        \[c_o(\max\{u,v\})\leq c_o(u)+c_o(v).\]
    \end{Corollary}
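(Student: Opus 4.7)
The plan is to chain Theorem \ref{thm-subadd}, Example \ref{ex-xi0}, and Theorem \ref{thm-restriction}, using the universal functional $\xi_0 := (1, 0, \ldots, 0, \ldots)$ and trivial weights $\psi_1 = \psi_2 = 0$. First I would dispose of trivialities: if $u(o) > -\infty$ or $v(o) > -\infty$, then the corresponding $c_o$ is $+\infty$ and the inequality is vacuous, so I may assume $u(o) = v(o) = -\infty$, and by subtracting constants (which does not affect the cse) I may further assume $u, v < 0$ on $\Delta^n$.

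The next step is to apply Theorem \ref{thm-subadd} on the product $\Delta^n \times \Delta^n$ with $n_1 = n_2 = n$, $\varphi_1 = u$, $\varphi_2 = v$, $\psi_1 = \psi_2 = 0$, and $\xi_1 = \xi_2 = \xi_0$. This produces
\[
\gamma_{\xi_0 \times \xi_0}\bigl(\max\{u \circ \pi_1,\, v \circ \pi_2\}\bigr) \;=\; \gamma_{\xi_0}(u) + \gamma_{\xi_0}(v),
\]
and by Example \ref{ex-xi0} the right-hand side equals $c_o(u) + c_o(v)$.

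Finally I would restrict to the diagonal $H := \{(z, z) : z \in \Delta^n\} \subset \Delta^n \times \Delta^n$ via Theorem \ref{thm-restriction}. Although that theorem is stated for the coordinate subspace $\{z_{k+1} = \cdots = z_n = 0\}$, the linear change of variables $(z, w) \mapsto (z, w - z)$ sends the diagonal to $\{w'_1 = \cdots = w'_n = 0\}$, reducing the diagonal case to the coordinate-subspace case. Under this change the functional $\xi_0 \times \xi_0 \in \ell_1^{(2n)}$ (which is just evaluation at the origin on $\mathcal{O}_{2n, o}$) coincides with $P^*\xi_0$, where $P$ projects onto the diagonal. Since $\max\{u \circ \pi_1,\, v \circ \pi_2\}$ restricts to $\max\{u, v\}$ on $H$, combining Theorem \ref{thm-restriction} with Example \ref{ex-xi0} yields
\[
c_o(\max\{u, v\}) \;=\; \gamma_{\xi_0}(\max\{u, v\}) \;\leq\; \gamma_{\xi_0 \times \xi_0}\bigl(\max\{u \circ \pi_1,\, v \circ \pi_2\}\bigr) \;=\; c_o(u) + c_o(v),
\]
which is the desired inequality.

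The main (and only) subtlety is legitimizing the use of Theorem \ref{thm-restriction} for the diagonal in place of a coordinate subspace, together with the identification $P^*\xi_0 = \xi_0 \times \xi_0$. Both rest on the intrinsic description of $\xi_0$ as the evaluation-at-origin functional, which is invariant under any linear change of coordinates fixing $o$; once this is observed, the argument is a direct assembly of the three cited results.
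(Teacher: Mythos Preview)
Your proposal is correct and follows essentially the same route as the paper: the paper's own justification is precisely to take $n_1=n_2=n$, $\psi_1=\psi_2=0$, $\xi_1=\xi_2=(1,0,\ldots,0,\ldots)$ in Theorem \ref{thm-subadd}, invoke Example \ref{ex-xi0}, and then apply Theorem \ref{thm-restriction} on the diagonal of $\Delta^n\times\Delta^n$. Your handling of the change-of-coordinates subtlety (reducing the diagonal to a coordinate subspace and noting that evaluation-at-origin is invariant) makes explicit a detail the paper leaves implicit.
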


% \nocite{*}
% \printbibliography[title=Main References, type=book]
\vspace{.1in} {\em Acknowledgements}. We would like to thank Dr. Zhitong Mi for carefully checking this note. We thank the referees for their time and comments. The second named author was supported by National Key R\&D Program of China 2021YFA1003100, NSFC-11825101, NSFC-11522101 and NSFC-11431013. Then third named author was supported by China Postdoctoral Science Foundation BX20230402 and 2023M743719.

\bibliographystyle{alpha}
\bibliography{xbib}

\end{document}